\definecolor{lightviolet}{RGB}{197,180,227}
\definecolor{darkcyan}{RGB}{0, 170, 170}
\DeclareMathOperator{\indeg}{indeg}
\DeclareMathOperator{\odeg}{outdeg}
\DeclareMathOperator{\Ver}{V}
\DeclareMathOperator{\Edg}{E}
\DeclareMathOperator{\init}{init}
\DeclareMathOperator{\fin}{fin}
\DeclareMathOperator{\var}{var}
\DeclareMathOperator{\wt}{wt}
\newtheorem{theorem}{Theorem}[section]
\newtheorem*{recalltheorem}{Theorem~\ref{MatroidVersion}}
\newtheorem*{thm2}{Theorem~\ref{thm2}}
\newtheorem*{best}{Theorem~\ref{best}}
\newtheorem*{ourbest}{Theorem~\ref{thm:ourbest}}
\newtheorem*{recallcorollary}{Corollary~\ref{cor:alexpoly}}
\newtheorem*{recallconjecture}{Conjecture~\ref{conj:log-concave}}
\newtheorem{lemma}[theorem]{Lemma}
\newtheorem{proposition}[theorem]{Proposition}
\newtheorem{definition}[theorem]{Definition}
\newtheorem{corollary}[theorem]{Corollary}
\newtheorem{conjecture}[theorem]{Conjecture}
\newtheorem{remark}[theorem]{Remark}
\newtheorem{notation}[theorem]{Notation}
\title{On the Alexander polynomial of special alternating links}\date{}
\author{Elena S.~Hafner}
\address{Elena S.~Hafner, Department of Mathematics, University of Washington, Seattle, WA 98195. \newline\textup{eshafner@uw.edu}
}
\author{Karola M\'esz\'aros}
\address{Karola M\'esz\'aros, Department of Mathematics, Cornell University, Ithaca, NY 14853. \newline\textup{karola@math.cornell.edu}
}
\author{Alexander Vidinas}
\address{Alexander Vidinas, Department of Mathematics, Cornell University, Ithaca, NY 14853. \newline\textup{acv42@cornell.edu}
}
\thanks{Karola M\'esz\'aros received support from  NSF Grants DMS-1847284  and  ~DMS-2348676.    Elena S.~Hafner received support from NSF Grants DMS-1847284 and DMS-2402150.  Alexander Vidinas received support from NSF Grant DMS-1847284. }
\begin{document}
\maketitle

\begin{abstract} The Alexander polynomial (1928) is the first polynomial invariant of links devised to help distinguish links up to isotopy. Fox's conjecture (1962) -- stating that the absolute values of the coefficients of the Alexander polynomial for any alternating link are  trapezoidal -- was settled for special alternating links by the present authors (2023); K\'alm\'an, the second author, and Postnikov gave an alternative proof (2025). The present paper is a study of the special combinatorial and discrete geometric properties that Alexander polynomials of special alternating links possess along with a generalization to all Eulerian graphs, introduced by Murasugi and Stoimenow (2003). We prove that the Murasugi and Stoimenow generalized Alexander polynomials can be expressed  in terms of volumes of root polytopes of unimodular matrices. The latter generalizes a result regarding the Alexander polynomials of special alternating links that follows by putting together the work  of  Li and Postnikov (2013) and K\'alm\'an, the second author, and Postnikov (2025).  Furthermore, we conjecture a generalization of Fox's conjecture to the generalized Alexander polynomials of Murasugi and Stoimenow and  bijectively relate two longstanding combinatorial models for the Alexander polynomials of special alternating links: Crowell's state model (1959) and Kauffman's state model (1982, 2006).

\end{abstract}

\section{Introduction}

 The central question of knot theory is that of distinguishing links up to isotopy. Discovered in the 1920's \cite{alexander1928topological}, the Alexander polynomial $\Delta_L(t)$, associated to an oriented link $L$, was the first polynomial knot invariant devised for this purpose.  The key property of the Alexander polynomial is that if oriented links $L_1$ and $L_2$ are isotopic, then $\Delta_{L_1}(t)\sim\Delta_{L_2}(t)$ where $\sim$ denotes equality up to multiplication by $\pm t^k$ for some integer ~$k$.
 
 \medskip
 
A tantalizing conjecture of Fox from 1962 \cite{fox} regarding the coefficients of the Alexander polynomials of alternating links remains stubbornly open to this day. It states that the absolute values of the coefficients of the Alexander polynomial of any alternating link are trapezoidal.   Recall that a sequence $a_1, \ldots, a_n$ is \textbf{trapezoidal} if  $$a_1<a_2<\cdots <a_k=a_{k+1}=\cdots=a_m>a_{m+1}>\cdots>a_n$$ for some $1\leq k\leq m\leq n$.  For any alternating link $L$, \cite{Crowell, murasugi1958alexander, murasugi1958genus2} show that the Alexander polynomial $\Delta_L(t)$ can be normalized such that $\Delta_L(-t)\in \mathbb{Z}_{\geq 0}[t]$.  In particular, the coefficients of $\Delta_L(t)$ have alternating signs, so Fox's conjecture can be restated in the following way:

\begin{conjecture} \label{fox} \cite{fox} Let $L$ be an alternating  link. Then the coefficients of $\Delta_L(-t)$ form a trapezoidal sequence.

\end{conjecture}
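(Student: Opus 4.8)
\medskip
\noindent\textbf{A proposed line of attack.} Conjecture~\ref{fox} is open; what follows is a program built on the tools of this paper, with the special alternating case serving as its base. First reduce to the essential situation: split links have $\Delta_L=0$ and nugatory crossings can be deleted, so we may assume $L$ is presented by a connected reduced alternating diagram $D$ with signed planar Tait graph $G$. The first step is to turn the state models studied here — Kauffman's, and bijectively Crowell's — together with the Murasugi--Stoimenow formalism into a manifestly positive formula
\[
  \Delta_L(-t)\ \doteq\ \sum_{S} t^{\,\mathrm{w}(S)},
\]
where $S$ ranges over the states of $D$ (equivalently, over certain spanning forests of $G$) and $\mathrm{w}(S)\ge 0$ is an explicit statistic; the role of \emph{alternating} is precisely that after $t\mapsto -t$ every state contributes with the same sign, so the conjecture becomes the unimodality of the distribution of $\mathrm{w}$.

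The second step is to realize this generating function geometrically, extending the dictionary of Li--Postnikov and T\'othm\'er\'esz beyond the bipartite/special case. For special alternating $L$ the graph $G$ has constant sign, its root polytope $\mathcal{Q}_G$ comes from a unimodular matrix, and $\Delta_L(-t)$ is (up to a unit) its $h^*$-polynomial, and the extra geometry makes the argument of the special case go through. For general alternating $L$ the plan is to attach to the signed graph $G$ — or to the pair $(G,G^*)$ of dual Tait graphs — a lattice polytope $\mathcal{P}(G)$, a mixed or ``twisted'' root polytope, whose $h^*$-polynomial is exactly the state sum above. Since $\Delta_L(t)\doteq\Delta_L(t^{-1})$, its $h^*$-vector is palindromic, so one expects $\mathcal{P}(G)$ to be Gorenstein (or its state complex to be a pseudomanifold); the remaining task is then to establish that the associated graded algebra has the \emph{Weak Lefschetz Property}, which for a Gorenstein algebra forces the $h^*$-vector to increase and then decrease — exactly Conjecture~\ref{fox}. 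Concretely, one wants to produce, uniformly in $G$, a Lefschetz element $\ell$, or equivalently an injection from states of weight $i$ to states of weight $i+1$ for every $i$ below the center.

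The main obstacle is precisely this Lefschetz/injection step, and the reason it is hard is structural. Unlike in the special alternating case, one cannot expect more than unimodality: there are alternating links for which $\Delta_L(-t)$ is unimodal but \emph{not} log-concave, so the Lorentzian-polynomial machinery that drives the base case (which yields ultra-log-concavity) cannot be in force in general. Nor can one reduce to the base case: the two-connected blocks of a signed Tait graph are rarely of constant sign, so $L$ is not a Murasugi sum of special alternating links, and even if it were, $\Delta_L(-t)$ would factor as a product — and products preserve log-concavity but not unimodality, so nothing would follow. Thus the positive and negative crossings must be handled together, and the proof is pushed into the regime of establishing a Lefschetz property with no accompanying log-concavity — notoriously the most delicate situation in unimodality arguments. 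Producing the Lefschetz element for the mixed polytope $\mathcal{P}(G)$, or the corresponding matching on its state complex, uniformly over all reduced alternating diagrams, is where the real difficulty lies; a natural warm-up, already posed in this paper, is to prove $h^*$-unimodality for all unimodular root polytopes, since the Murasugi--Stoimenow generalized Alexander polynomials already live in that world.
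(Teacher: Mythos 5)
This statement is Fox's conjecture, which the paper does not prove and explicitly describes as open; the paper only records the special alternating case (settled in the authors' prior work) and proposes a further generalization (Conjecture~\ref{conj:log-concave}) to all Eulerian digraphs. Your text is likewise not a proof but a research program, and you say so yourself: the entire mathematical content of the conjecture is concentrated in the step you defer --- producing a Lefschetz element for a hypothetical ``mixed root polytope'' $\mathcal{P}(G)$, or equivalently an injection from states of weight $i$ to states of weight $i+1$ below the center. Nothing in the proposal constructs $\mathcal{P}(G)$, verifies that its $h^*$-polynomial is $\Delta_L(-t)$ for a general (non-special) alternating link, or exhibits the injection. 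So the gap is not a missing lemma inside an otherwise complete argument; it is the argument. As a survey of why the problem is hard and of which tools from this paper might bear on it, the sketch is reasonable, but it cannot be graded as a proof of Conjecture~\ref{fox}.

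Two factual assertions in your sketch also conflict with the paper and the literature it cites. First, you claim there exist alternating links whose $\Delta_L(-t)$ is unimodal but not log-concave; this would refute Stoimenow's strengthening of Fox's conjecture to log-concavity, which the paper presents as an open conjecture, and no such example is known or cited. You cannot use a nonexistent counterexample as structural evidence that ``the Lorentzian machinery cannot be in force in general.'' Second, you say the special alternating base case ``yields ultra-log-concavity''; the paper explicitly exhibits the oriented three-cycle (trefoil), whose coefficient sequence $(1,1,1)$ is log-concave but not ultra-log-concave, so the base case gives log-concavity only. Your closing suggestion --- proving $h^*$-unimodality for unimodular root polytopes as a warm-up --- is sensible and aligns with the paper's Theorem~\ref{MatroidVersion} and Conjecture~\ref{conj:log-concave}, but it remains a suggestion, not a step taken.
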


 Stoimenow \cite{stoi} strengthened Fox's conjecture to log-concavity without internal zeros. Fox's conjecture remains open in general although some special cases have been settled  by Hartley \cite{H79} for two-bridge knots,  Murasugi \cite{murasugi} for a family of alternating algebraic links, and Ozsv\'ath and Szab\'o \cite{OS03} for the case of genus $2$ alternating knots. Jong \cite{jong2009alexander} also confirmed that Fox's conjecture holds for genus $2$ alternating knots.  The authors of the current paper  resolved Fox's conjecture for special alternating links \cite{hafner2023logconcavity} in 2023; Azarpendar, Juh\'asz, and K\'alm\'an \cite{tamas-recent} built on these results to resolve Fox's conjecture for certain diagrammatic Murasugi sums of special alternating links in 2024; while  K\'alm\'an, the second author, and Postnikov \cite{trap} gave an alternative proof of the special alternating case with a matroidal view in 2025.

 \medskip

The methods used in  \cite{hafner2023logconcavity} are specific to the deeply combinatorial features of the family of special alternating links and do not in an immediate fashion extend to all alternating links. 
The combinatorial and discrete geometric properties of special alternating links are also on display in the beautiful works of K\'alm\'an and Murakami \cite{murakami} and  K\'alm\'an and Postnikov \cite{KTP2017} in which the top of the HOMFLY polynomial of special alternating links is expressed in terms of the $h$-vector of triangulations of root polytopes. 

\medskip

The present paper is devoted to the study of the combinatorics and discrete geometry of the Alexander polynomials of special alternating links as well as their generalization to all Eulerian digraphs as defined by Murasugi and Stoimenow \cite{even}. As it turns out, a classical theorem in graph theory by de Bruijn, Ehrenfest \cite{best1}, Smith, and Tutte \cite{best2} which states that the number of 
 oriented spanning trees of an Eulerian digraph is independent of the choice of root is deeply ingrained in this story, and we start by explaining why.  First, however, we provide the construction of a special alternating link as it is the object that governs our pursuits.

\subsection{Notation and graph theory definitions}
\label{sec:notation}
 For any graph $G$, let $\Ver(G)$ denote the vertex set of $G$ and $\Edg(G)$ the edge set of $G$.  For a digraph $D$, we will denote the initial vertex of any $e \in \Edg(D)$ by $\init(e)$ and the final vertex of $e$ by $\fin(e)$.  The number of edges with initial vertex $v$ will be denoted $\odeg(v)$, and the number of edges with final vertex $v$ will be denoted $\indeg(v)$. We say $D$ is \textbf{Eulerian} if it is connected, and at each vertex $v\in \Ver(D)$, $\indeg(v)=\odeg(v)$.  A planar Eulerian digraph is called an \textbf{alternating dimap} if its planar embedding is such that, reading cyclically around each vertex, the edges alternate between incoming and outgoing.
 
 \subsection{Special alternating links from planar Eulerian digraphs} \label{sec:SpecialAltDef} 

There are two types of crossings in a link diagram: positive and negative.
\begin{center} \begin{tikzpicture}
\draw[black, thick] (1,0) -- (1,.9);
\draw[-stealth, black, thick] (1,1.1) -- (1,2);
\draw[-stealth, black, thick] (0,1) -- (2,1);
\draw[black, thick] (4,1) -- (4.9,1);
\draw[-stealth, black, thick] (5.1,1) -- (6,1);
\draw[-stealth, black, thick] (5,0) -- (5,2);

\node at (3,1.2) {or};
%\draw[-{Latex[round]}, black, thick] (.7,.7)--(.8,.8);
%\draw[-{Latex[round]}, black, thick] (4.7,.7)--(4.8,.8);
\end{tikzpicture} \end{center} 
Crossings of the form on the left are called \textbf{positive crossings} and those of the form on the right are called \textbf{negative crossings}.

 A \textbf{special alternating link} is any link which arises from the following construction, as presented by Juh\'asz, K\'alm\'an and Rasmussen \cite{jkr} and  by K\'alm\'an and Murakami \cite{murakami}.  Let $G$ be a planar bipartite graph, and let $D$ be its planar dual.  Observing that $D$ is Eulerian, we orient its edges so that $D$ is an alternating dimap.  Let $M(G)$ be the \textbf{medial graph} of $G$: the vertex set of $M(G)$ is the set $\{v_e \ | \ e\in\Edg(G)\}$, and two vertices $v_e$ and $v_{e'}$ of $M(G)$ are connected by an edge if the edges $e$ and $e'$ are consecutive in the boundary of a  face of $G$. We think of a particular planar drawing of $M(G)$ here:  the midpoints of the edges of the planar drawing of $G$ are the vertices of $M(G)$. Thinking of $M(G)$ as a flattening of a link, the orientation of $D$ will determine how we orient $M(G)$.  Partition the regions of $M(G)$ corresponding to vertices of $G$ into color classes $A$ and $B$ such that whenever an edge of $D$ meets a vertex of $M(G)$, we pass a region in $A$ on the left and a region in $B$ on the right when traveling along the edge in the direction of its orientation.  We orient the edges of $M(G)$ such that the boundary of every region in $A$ (resp. $B$) is a counterclockwise (resp. clockwise) cycle and replace the vertices of $M(G)$ with over- and under-crossings such that either every crossing is positive or every crossing is negative.  The case where every crossing is positive is called a \textbf{positive special alternating link}, and the case where every crossing is negative is a \textbf{negative special alternating link}.  We denote by $L_G$ the positive special alternating link arising from a planar bipartite graph $G$ in this way.  See Figure \ref{fig:bipandlink} for an example.

\begin{figure}
\centering
\includegraphics[height=5.75cm]{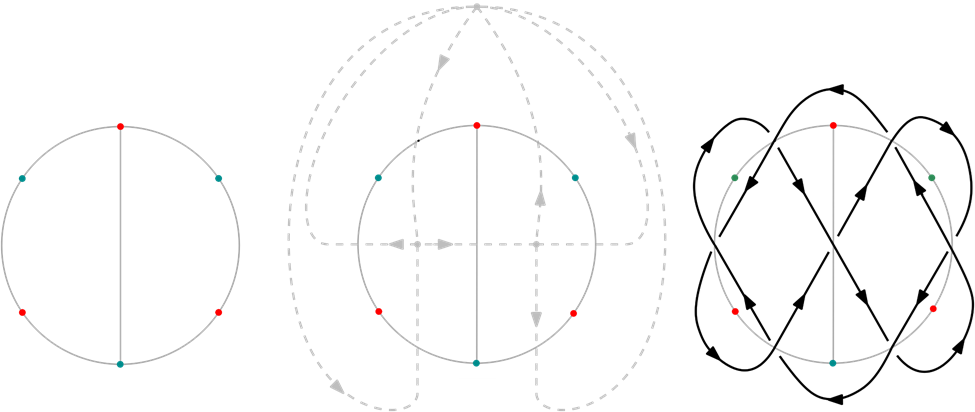}
\caption{A planar bipartite graph $G$ along with its dual alternating dimap $D$ and its associated positive special alternating link $L_G$.}
\label{fig:bipandlink}
\end{figure}

 \subsection{The BEST Theorem for Eulerian digraphs} \label{sub:best}
Recall that an \textbf{Eulerian tour} of $D$ is a directed cycle that uses each edge of $D$ exactly once. An \textbf{oriented spanning tree} rooted at $r \in \Ver(D)$ is a spanning tree of $D$ such that for each vertex $v \in \Ver(D)$, there is a unique directed path from $v$ to $r$.   
 
A classical theorem in graph theory  by de Bruijn, Ehrenfest \cite{best1}, Smith, and Tutte \cite{best2}  (named  the BEST Theorem for its authors) states that the number of Eulerian tours of an Eulerian digraph $D$ equals the number of oriented spanning trees of $D$ rooted at an arbitrary fixed vertex $r$ of $D$ multiplied by a constant dependent on $D$.  In particular, the BEST Theorem implies that:

\begin{theorem}[\cite{best1,best2}] \label{best} The number of oriented spanning trees of an Eulerian digraph  $D$ is independent of the choice of root. \end{theorem}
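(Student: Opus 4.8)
The plan is to prove this via the Matrix-Tree Theorem for digraphs, reducing the root-independence statement to a statement about the Laplacian of an Eulerian digraph. First, I would recall the directed Matrix-Tree Theorem: if $\Lap = \Lap(D)$ denotes the Laplacian of $D$ defined by $\Lap_{vv} = \odeg(v)$ and $\Lap_{uv} = -(\text{number of edges from } u \text{ to } v)$ for $u \neq v$, then the number of oriented spanning trees of $D$ rooted at $r$ equals the $(r,r)$-cofactor of $\Lap$, i.e.\ the determinant of the matrix $\Lap^{(r)}$ obtained by deleting the row and column indexed by $r$. So it suffices to show that this principal cofactor is the same for every choice of $r$.

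The key step is to use the Eulerian hypothesis. Since $\indeg(v) = \odeg(v)$ at every vertex, the column sums of $\Lap$ vanish as well as the row sums: each row of $\Lap$ sums to $\odeg(v) - \odeg(v) = 0$, and each column indexed by $v$ sums to $\odeg(v) - \indeg(v) = 0$. Thus $\mathbf{1}$ is both a left and a right null vector of $\Lap$. Now I would invoke the standard linear-algebra fact that for any square matrix $M$ of size $n$ whose rows all sum to zero \emph{and} whose columns all sum to zero, all of the $(i,i)$-cofactors are equal (indeed all cofactors $\operatorname{cof}_{ij}(M)$ are equal, for all $i,j$). One clean way to see this: the adjugate matrix $\operatorname{adj}(\Lap)$ satisfies $\Lap \cdot \operatorname{adj}(\Lap) = \det(\Lap)\, I = 0$ and $\operatorname{adj}(\Lap)\cdot \Lap = 0$, and when $\Lap$ has rank exactly $n-1$ (which holds here because $D$ is connected, so the cokernel and kernel are both one-dimensional, spanned by $\mathbf{1}$), the adjugate has rank $1$ and its column space is $\ker \Lap = \langle \mathbf{1}\rangle$ while its row space is the left kernel $= \langle \mathbf{1}^{T}\rangle$; hence $\operatorname{adj}(\Lap) = c\,\mathbf{1}\mathbf{1}^{T}$ for a scalar $c$, and the diagonal entries $\operatorname{adj}(\Lap)_{rr} = c$ are all equal. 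Since $\operatorname{adj}(\Lap)_{rr}$ is exactly the $(r,r)$-cofactor $\det \Lap^{(r)}$, the number of oriented spanning trees rooted at $r$ is independent of $r$, as claimed.

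The main obstacle — really the only subtlety — is justifying that $\Lap$ has rank exactly $n-1$ rather than rank $\le n-2$, since the rank-$1$ conclusion about the adjugate fails otherwise (if the rank drops further, the adjugate is identically zero, and although the statement is then trivially true since every rooted tree count is zero, one should still handle it). Connectedness of $D$ gives this: one shows that for an Eulerian (hence, as an undirected graph, connected) digraph, $\ker \Lap$ is one-dimensional. This can be argued directly — a harmonic-type maximum-principle argument on a vector $x$ with $\Lap x = 0$ forces $x$ to be constant on each strongly connected component reachable considerations — or one can simply note that since $D$ is Eulerian it is strongly connected, and a strongly connected digraph has at least one oriented spanning tree rooted at any vertex, so some principal cofactor is nonzero, forcing rank $n-1$. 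I would take the latter route: observe that an Eulerian digraph is strongly connected (given connectedness, balance of in- and out-degrees lets one extend any directed path, yielding directed paths between all pairs of vertices), deduce the existence of an oriented spanning tree rooted at some vertex, hence $\det \Lap^{(r_0)} \neq 0$ for that $r_0$, hence $c \neq 0$ and all the cofactors equal this common nonzero value. If one prefers to avoid even this much, the degenerate case can be dispatched in one line as noted above.
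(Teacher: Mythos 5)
Your proof is correct, but it takes a genuinely different route from the one the paper relies on. The paper does not reprove Theorem~\ref{best}; it imports it as a corollary of the BEST-type count in Theorem~\ref{ToursToTrees}: since any Eulerian tour, being a cyclic sequence through all edges, can be rotated to start at any prescribed edge, the quantity $\epsilon(D,e)$ is the same for every edge $e$, and the formula $\epsilon(D,e)=c_0(D,r)\prod_{u}(\odeg(u)-1)!$ with $r=\init(e)$ then forces $c_0(D,r)$ to be root-independent. You instead argue through the directed Matrix--Tree theorem: the Eulerian condition makes the out-degree Laplacian $\Lap$ have zero row \emph{and} column sums, so $\det\Lap=0$, the adjugate satisfies $\Lap\,\operatorname{adj}(\Lap)=\operatorname{adj}(\Lap)\,\Lap=0$, and in the rank-$(n-1)$ case (which you correctly secure via strong connectivity of an Eulerian digraph, with the degenerate lower-rank case dispatched separately) $\operatorname{adj}(\Lap)=c\,\mathbf{1}\mathbf{1}^{T}$, so all principal cofactors --- hence all rooted tree counts --- coincide. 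Both arguments are sound; yours is self-contained linear algebra, sidesteps Eulerian tours entirely, and extends readily to weighted Laplacians (it is also close in spirit to the determinantal formula of Lemma~\ref{lem:Laplacian} used later in the paper), while the tour-counting route is what the paper actually leverages downstream, e.g.\ in Corollary~\ref{flip0}, where the identity $\epsilon(D,e)=\epsilon(D^{T},f)$ obtained by reversing tours gives $c_0(D,r)=c_0(D^{T},r)$ --- a step your Laplacian argument would recover instead by noting that transposing $D$ transposes $\Lap$ and that the relevant cofactors of $\Lap$ and $\Lap^{T}$ agree. One cosmetic caveat: your appeal to ``the $(r,r)$-cofactor counts oriented spanning trees rooted at $r$'' should be matched carefully to the paper's convention that these are trees with all paths directed \emph{toward} $r$; with the out-degree diagonal and $\Lap_{uv}=-\#\{\text{edges } u\to v\}$ this is indeed the correct pairing, so no change is needed, but the convention is worth stating explicitly since the opposite convention counts arborescences.
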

 
A \textbf{$k$-spanning tree} rooted at $r \in \Ver(D)$ is a spanning tree of $D$ such that reversing the orientation of exactly $k$ of its edges yields an oriented spanning tree rooted at $r$. With this terminology, $0$-spanning trees coincide with oriented spanning trees.  Analogously, an \textbf{arborescence} rooted at $r$ is a spanning tree such that for every $v \in \Ver(D)$, there is a unique directed path from $r$ to $v$, i.e. arborescences of $D$ are precisely $\{|\Ver(D)|-1\}$-spanning trees.   
\medskip

Denote by $c_k(D,r)$ the number of $k$-spanning trees rooted at $r$ in the Eulerian digraph $D$.  Inspired by the Alexander polynomial of special alternating links, Murasugi and Stoimenow \cite{even} prove the following extension of Theorem \ref{best}:

\begin{theorem}\label{thm:ourbest} \cite{even} For any $k \in \mathbb{N}$, the number of $k$-spanning trees of an Eulerian digraph  $D$ is independent of the choice of root. Moreover, the sequence $c_0(D,r)$, $\ldots$, $ c_{|\Ver(D)|-1}(D,r)$ is palindromic. Equivalently: $c_k(D,r)=c_{|\Ver(D)|-1-k}(D,r)$ for all $k\in \{0, 1, \ldots, |\Ver(D)|-1\}$.
\end{theorem}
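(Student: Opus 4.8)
The plan is to encode the numbers $c_k(D,r)$ in the generating polynomial
\[
  P_r(q) \;=\; \sum_{k=0}^{n-1} c_k(D,r)\,q^k, \qquad n=|V(D)|,
\]
and to prove both assertions simultaneously by identifying $P_r(q)$ with a principal cofactor of one explicit ``$q$-deformed Laplacian'' $N(q)$ attached to $D$. Note that root-independence of every $c_k(D,r)$ is equivalent to root-independence of the whole polynomial $P_r(q)$, while the identity $c_k(D,r)=c_{|V(D)|-1-k}(D,r)$ is equivalent to the functional equation $P_r(q)=q^{\,n-1}P_r(1/q)$, i.e.\ to $P_r$ being a palindromic polynomial of degree exactly $n-1$.

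First I would produce the cofactor description. Form the auxiliary weighted digraph $D^{+}$ from $D$ by adjoining, for every edge $e$, a new edge $\bar e$ with the reversed orientation and weight $q$, the original edges keeping weight $1$. A spanning tree $T$ of $D$ is a $k$-spanning tree rooted at $r$ precisely when exactly $k$ of its edges point away from $r$ (relative to the unique $T$-paths toward $r$); reorienting each of those $k$ edges through its weight-$q$ copy turns $T$ into an oriented spanning tree rooted at $r$ of $D^{+}$ of weight $q^{k}$, and this is a weight-preserving bijection between $k$-spanning trees of $D$ (over all $k$) and oriented spanning trees of $D^{+}$ rooted at $r$. Hence $P_r(q)$ is the weighted count of oriented spanning trees of $D^{+}$ rooted at $r$, and the weighted directed Matrix--Tree Theorem yields $P_r(q)=\det N(q)_{(r)}$, where $N(q)_{(r)}$ deletes row and column $r$ from the (out-degree) Laplacian $N(q)$ of $D^{+}$. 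Writing $a_{uv}$ for the number of edges of $D$ from $u$ to $v$ and using the Eulerian condition $\indeg_D(v)=\odeg_D(v)$ to collapse the diagonal, one gets
\[
  N(q)_{uu}=(1+q)\,\odeg_D(u),\qquad
  N(q)_{uv}=-\bigl(a_{uv}+q\,a_{vu}\bigr)\quad (u\neq v).
\]

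The proof then comes down to two elementary features of $N(q)$. (i) A direct check with $\indeg_D=\odeg_D$ shows that $N(q)$ has all row sums and all column sums equal to $0$. Since $c_0(D,r)=\det N(0)_{(r)}>0$ (an Eulerian digraph is strongly connected, hence has an oriented spanning tree rooted at every vertex), $N(q)$ has rank $n-1$ over $\mathbb{Q}(q)$, so its kernel and cokernel are spanned by the all-ones vector; the identities $N(q)\,\operatorname{adj}N(q)=\operatorname{adj}N(q)\,N(q)=\det N(q)\cdot I=0$ then force $\operatorname{adj}N(q)$ to have constant rows and constant columns, hence to be a constant matrix, so all of its diagonal entries $\det N(q)_{(r)}$ agree --- this is root-independence. (ii) The displayed formulas give at once the identity $N(1/q)=q^{-1}N(q)^{\top}$, whence $\det N(q)_{(r)}=q^{\,n-1}\det N(1/q)_{(r)}$; thus $\det N(q)_{(r)}$ is palindromic, and since its constant term is $c_0(D,r)>0$ its degree is exactly $n-1$. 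Comparing coefficients gives $c_k(D,r)=c_{n-1-k}(D,r)$, and the case $q=0$ recovers Theorem~\ref{best}.

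The one place I expect to have to be careful is the bijection of the second paragraph: one must verify that ``reverse exactly $k$ edges to obtain an oriented spanning tree rooted at $r$'' coincides with ``exactly $k$ tree-edges point away from $r$'', and that the orientation convention in the directed Matrix--Tree Theorem is the one matching the paper's definition of oriented spanning tree (otherwise one picks up a transpose and must instead adjoin the $\bar e$'s in the other register). Once the translation to $D^{+}$ is pinned down, the remainder is linear algebra over $\mathbb{Q}(q)$.
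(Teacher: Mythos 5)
Your argument is correct, but it takes a genuinely different route from the paper's. You encode the whole sequence in $P_r(q)=\sum_k c_k(D,r)q^k$, identify it via a weighted directed matrix--tree computation with the principal cofactor of the deformed Laplacian $\overline{L(D)}+q\,\overline{L(D)}^T$ (this is essentially the determinantal formula of Lemma~\ref{lem:Laplacian}, which the paper only quotes from Murasugi--Stoimenow later, in Section~\ref{sec:conj}), and then read off both assertions from two symmetries of $N(q)$: zero row and column sums (this is where the Eulerian condition enters) force, via the adjugate, all principal cofactors to coincide, giving root independence; and $N(1/q)=q^{-1}N(q)^{\top}$ gives $P_r(q)=q^{\,n-1}P_r(1/q)$, giving palindromicity. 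The points you flagged do check out: an oriented spanning tree of $D^{+}$ cannot use both copies $e,\bar e$ of an edge (that would double an edge of the underlying tree), so your correspondence with $k$-spanning trees is a weight-preserving bijection, and the out-degree Laplacian convention matches the paper's definition of oriented spanning tree, since each non-root vertex has exactly one out-edge in a tree oriented toward $r$. The paper instead stays combinatorial: it uses the BEST Theorem (Theorem~\ref{ToursToTrees}) to obtain root independence of $c_0$ and the transpose symmetry $c_0(D,r)=c_0(D^T,r)$ (Corollary~\ref{flip0}), proves the inclusion--exclusion identity of Theorem~\ref{CountingArbs} expressing $c_k(D,r)$ in terms of $c_0(D/E',r)$ over acyclic edge sets $E'$, and concludes via the duality sending $k$-spanning trees of $D$ to $(|V(D)|-1-k)$-spanning trees of $D^T$. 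Your route is shorter and is self-contained linear algebra over $\mathbb{Q}(q)$ (close in spirit to the original determinantal treatment in the cited source), whereas the paper's combinatorial identity in Theorem~\ref{CountingArbs} is exactly what later converts into the root-polytope volume statements (Corollary~\ref{CountingArbsVol} and Theorem~\ref{MatroidVersion}), which the determinantal argument by itself would not supply.
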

 
 In Section \ref{sub:root}, we will provide a new proof of Theorem \ref{thm:ourbest} which will help us connect the Alexander polynomials of special alternating links to volumes of root polytopes. 
 
 \subsection{The Alexander polynomial of an Eulerian digraph} \label{sub:thm2}
Murasugi and Stoimenow \cite{even} define the \text{Alexander polynomial of an Eulerian digraph} $D$ (and arbitrary $r \in \Ver(D)$) to be:     \begin{equation} \label{pdt} P_D(t)=\sum_{k=0}^{\infty} c_k(D, r)t^k.\end{equation}

The name is justified by Murasugi and Stoimenow's theorem that for an alternating dimap $D$, the polynomial  $P_D(t)$ equals the Alexander polynomial of a special alternating link associated to $D$. Recall that an \text{alternating dimap} is a planar Eulerian digraph such that, reading cyclically around each vertex, the edges alternate between coming in and going out.

\begin{theorem}\cite[Theorem 2]{even} \label{thm2} For an alternating dimap $D$, the  polynomial  $P_D(t)$ equals the Alexander polynomial $\Delta_{L_{G}}(-t)$ for the special alternating link  $L_{G}$ associated to the planar dual $G$ of $D$, up to multiplication by $\pm t^k$ for some integer ~$k$. 
\end{theorem}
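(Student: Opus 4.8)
Here is a plan for proving Theorem~\ref{thm2}.

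The plan is to realize $\Delta_{L_G}(t)$ as a Seifert matrix determinant and to recognize that determinant, after substituting $-t$ for $t$, as a weighted spanning tree generating function of $D$. The first step is to produce the right Seifert surface and compute its Seifert form. Since $M(G)$ is special alternating, the Seifert circles obtained from the oriented diagram are exactly the boundary cycles of the regions of $M(G)$ corresponding to vertices of $G$ (these are the regions in the classes $A$ and $B$ of the construction), so Seifert's algorithm returns the checkerboard spanning surface $F$ whose disks are the vertices of $G$ and whose bands are the edges of $G$, all half-twisted with the same sign because $L_G$ is positive; in particular $F$ deformation retracts onto $G$, so $H_1(F;\mathbb{Z})\cong H_1(G;\mathbb{Z})$ has a basis $\{\gamma_v\}$ indexed by the bounded faces $v$ of $G$, that is, by the vertices $v\neq r$ of $D$ (with $r$ the outer face), where $\gamma_v$ is the boundary cycle of the face $v$. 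I would then compute the Seifert form $V_{uv}=\mathrm{lk}(\gamma_u^{+},\gamma_v)$ in this basis and check that it is a purely combinatorial matrix of $D$: the diagonal entry at $v$ is $\odeg(v)=\indeg(v)$ up to a universal sign coming from the crossing sign, while for $u\neq v$ the entry $V_{uv}$ counts the edges of $D$ directed from $u$ to $v$ and $V_{vu}$ counts those directed from $v$ to $u$ (each with that same universal sign). Equivalently, $V+V^{T}$ is, up to an overall sign, the reduced Laplacian of the undirected graph underlying $D$ obtained by deleting the row and column of $r$, while $V-V^{T}$ records the orientations of the edges of $D$.

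With $\Delta_{L_G}(t)\doteq\det(tV-V^{T})$---the standard Seifert matrix formula, in whatever transpose and sign convention makes $L_G$ the positive special alternating link---the substitution $t\mapsto -t$ gives $\Delta_{L_G}(-t)\doteq\det(tV+V^{T})$, and by the first paragraph the matrix $tV+V^{T}$, with rows and columns indexed by $V(D)\setminus\{r\}$, is (after transposing, which does not change the determinant) exactly the weighted reduced Laplacian of $D$ in which a copy of each edge $e$ of $D$ carries weight $1$ when used with the orientation of $e$ and a reversed copy carries weight $t$, where reduced means with the row and column of $r$ removed. The weighted directed Matrix--Tree Theorem then expands this determinant as $\sum_T \prod_{e\in T} w_r(e)$, the sum running over spanning trees $T$ of $D$, where for the unique orientation of $T$ as a tree directed toward $r$ the product $\prod_{e\in T} w_r(e)$ equals $t^{\,k}$ with $k$ the number of edges of $T$ whose orientation in $D$ must be reversed to obtain that directed tree---which is precisely the statistic defining a $k$-spanning tree rooted at $r$. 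Hence $\det(tV+V^{T})=\sum_{k}c_k(D,r)\,t^{k}=P_D(t)$, so $\Delta_{L_G}(-t)\doteq P_D(t)$. To upgrade $\doteq$ to genuine equality, observe that $P_D(t)$ is already the normalized representative: its constant term $c_0(D,r)$ equals the number of oriented spanning trees of $D$, which is nonzero and, by Theorem~\ref{best}, independent of $r$, so no power of $t$ has to be cleared, matching the customary normalization of the Alexander polynomial of a special alternating link. Finally, although the construction singled out $r$ to be the outer face of $G$, the claim for arbitrary $r$ is immediate since $P_D(t)$ does not depend on $r$ by Theorem~\ref{thm:ourbest} (this independence is already built into the definition~\eqref{pdt}).

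The genuinely delicate step is the first paragraph: pinning down the Seifert matrix of $L_G$ in terms of $D$---which of the two checkerboard surfaces is the Seifert surface, the precise signs of the off-diagonal entries, the framing contribution to the diagonal, and whether one lands on $tV-V^{T}$ or $V-tV^{T}$---is where the real content lies, and an incorrect convention here breaks the dictionary with $k$-spanning trees; once the matrix is identified, the Matrix--Tree step is mechanical. A second route, which bypasses Seifert surfaces altogether and connects with the state-model comparison elsewhere in the paper, is to start instead from a state-sum formula for $\Delta_{L_G}$ on the diagram $M(G)$---Crowell's state model or Kauffman's state model---and to exhibit a bijection between its states and the spanning trees of $D$ under which the exponent of $t$ attached to a state becomes exactly the statistic $k$; this would be my fallback if the linking-form bookkeeping turns out to be too unwieldy to carry out cleanly.
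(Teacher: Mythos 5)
Your plan is correct in outline but takes a genuinely different route from the paper. The paper proves Theorem~\ref{thm2} entirely inside Kauffman's state model: it uses Kauffman's correspondence between states and pairs of dual spanning trees of $D$ and $G$, identifies the crossing labels with the $A$/$B$ region classes (Lemma~\ref{regionlabels}), shows via Lemma~\ref{SpanningTreesLemma} that the spanning tree of $G$ contributes a power $t^m$ that is the same for every state, observes that the black-hole count forces no cancellation in the state sum, and reads off the weight $t^{|\Ver(D)|-1-k}$ of each $k$-spanning tree; this machinery is set up deliberately so it can be reused for the Crowell--Kauffman bijection in Section~\ref{sec:bij}. You instead identify the Seifert surface of the positive special alternating diagram with the checkerboard surface retracting onto $G$, compute the Seifert pairing on the face-cycle basis in terms of directed edge counts of $D$, and apply the weighted directed matrix--tree theorem to $\det(tV+V^{T})$; this amounts to proving, via the Seifert form, the determinantal formula $P_D(t)=\det(\overline{L(D)}+t\overline{L(D)}^{T})$ that the paper only quotes from \cite{even} as Lemma~\ref{lem:Laplacian}, together with its identification with $\Delta_{L_G}(-t)$. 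Your route is shorter and relies on standard topology plus linear algebra; what it does not produce is the state-level combinatorial correspondence (trees $\leftrightarrow$ Kauffman states, and the constant exponent $m$ coming from the dual tree) that the paper exploits later, and your fallback suggestion at the end is essentially the paper's actual argument.

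Two cautions. First, the Seifert-matrix computation, which you rightly flag as the real content, is only asserted; as literally written your signs are inconsistent, since if the diagonal of $V$ is $\odeg(v)$ up to a universal sign then the off-diagonal entries must carry the \emph{opposite} sign for $V+V^{T}$ to be the reduced Laplacian up to one overall sign, as in the positive trefoil where the face-cycle basis gives $V=\left(\begin{smallmatrix}-1&1\\0&-1\end{smallmatrix}\right)$. Second, the paper (following Murasugi--Stoimenow) only proves equality up to units $\pm t^{k}$, so your closing normalization argument is more than is required, and nailing genuine equality would anyway need a fixed normalization convention for $\Delta_{L_G}$ rather than just the observation that the constant term $c_0(D,r)$ is nonzero.
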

 
 We provide a proof of Theorem \ref{thm2} in Section \ref{sec:thm2} which we will utilize in Section \ref{sec:bij} to relate the Kauffman and Crowell state models. 
 We note that  
Murasugi and Stoimenow denote the polynomial $P_D$  by $\Delta_G$ as they start with an undirected graph $G$ of even valence and then put an arbitrary Eulerian orientation on it. For us, the notation $P_D$ will be more convenient. 

\medskip

 Previous work of the authors \cite[Theorem 1.2]{hafner2023logconcavity} proves that for alternating dimaps $D$, the  Alexander polynomial $\Delta_{L_{G}}(-t)$ for the special alternating link  $L_{G}$ associated to the planar dual $G$ of $D$ is log-concave.   In light of Theorem \ref{thm2}, this result can be rephrased as follows for the  polynomial ~$P_D$:

\begin{theorem}\cite[Theorem 1.2]{hafner2023logconcavity} \label{ourlogconc} For any alternating dimap $D$, the coefficients of the  polynomial $P_D(t)$ form a log-concave sequence with no internal zeros.
\end{theorem}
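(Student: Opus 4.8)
The plan is to deduce log-concavity from the Alexandrov--Fenchel inequalities, by realizing $P_D(t)$ as a univariate specialization of the volume polynomial of a root polytope and using the theory of Lorentzian polynomials to handle normalization.

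First I would pass to the planar dual picture: let $G$ be the planar bipartite graph dual to the alternating dimap $D$, with color classes of black and white vertices, and let $\mathcal{Q}_G \subseteq \mathbb{R}^{V(G)}$ be its root polytope, the convex hull of the vectors $e_u - e_w$ over edges $uw$ of $G$ with $u$ black and $w$ white. The first --- and, I expect, hardest --- step is to set up a precise dictionary, in the spirit of the Li--Postnikov and T\'othm\'er\'esz results highlighted in the introduction, between the combinatorics of $k$-spanning trees of $D$ and the discrete geometry of $\mathcal{Q}_G$: concretely, to show that $c_k(D,r)$ is the number of lattice points of $\mathcal{Q}_G$ in the $k$-th slice of a suitable regular unimodular triangulation, equivalently the $k$-th entry of the $h$-vector of that triangulation. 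Unimodularity of the incidence matrix of a bipartite graph guarantees such a triangulation exists; the resulting $h$-vector is independent of all choices and palindromic, which is exactly the combinatorial shadow (via Dehn--Sommerville) of Theorem \ref{thm:ourbest}, so that theorem serves both as a consistency check and as a tool.

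Second, having identified the $c_k$ with slice data of $\mathcal{Q}_G$, I would promote this to a two-body mixed-volume statement: exhibit convex bodies $B$ and $C$, built from the black and white halves of the bipartition, such that $c_k$ is, up to a fixed binomial factor, the mixed volume $V(B^{[d-k]}, C^{[k]})$, where $d = |V(D)|-1$. The volume polynomial $\mathrm{vol}(xB + yC) = \sum_{k} \binom{d}{k}\, V(B^{[d-k]}, C^{[k]})\, x^{d-k} y^k$ is Lorentzian --- this is the Br\"and\'en--Huh reformulation of the Alexandrov--Fenchel inequalities --- so its coefficient sequence is ultra-log-concave with no internal zeros, and $(c_k)$ inherits ordinary log-concavity: since the binomial coefficients are themselves log-concave, log-concavity of $c_k/\binom{d}{k}$ forces log-concavity of $(c_k)$.

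Finally, ``no internal zeros'' can be read off the same structure: the support of a Lorentzian polynomial is M-convex, which in the bivariate homogeneous case is an integer interval, so the nonzero $c_k$ occupy consecutive indices. Alternatively one argues combinatorially: given a $k$-spanning tree and a $k'$-spanning tree of $D$ with $k < k'$, reversing orientations of edges one at a time along their symmetric difference produces a $j$-spanning tree for every $k \le j \le k'$. The crux of the whole argument is the first step --- pinning down the exact polytope and triangulation so that the $c_k$ genuinely appear as mixed volumes of a single pair of bodies, rather than merely as entries of an $h$-vector, which on their own carry no log-concavity guarantee.
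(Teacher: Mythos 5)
You have correctly guessed the family of tools behind this theorem: it is not proved in the present paper but imported from \cite{hafner2023logconcavity}, where the argument does run through Lorentzian-polynomial technology --- but applied to a \emph{multivariate} refinement of the Alexander polynomial built from Crowell's state model, not to a univariate two-body volume polynomial. Both of your concrete reductions fail. First, the dictionary you call the hardest step is false as stated: $c_k(D,r)$ is not the $k$-th $h$-vector entry of a unimodular triangulation of $\mathcal{Q}_G$. The entries of the $h$-vector of any such triangulation sum to the normalized volume of $\mathcal{Q}_G$, which by Corollary \ref{VolMatroidCase} equals $c_0(D,r)$, whereas $\sum_k c_k(D,r)$ is the total number of spanning trees of $D$. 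Already for the oriented three-cycle the dual $G$ is two vertices joined by three parallel edges, $\mathcal{Q}_G$ is a point with $h$-vector $(1)$, while $(c_0,c_1,c_2)=(1,1,1)$. That $h$-vector is the interior polynomial, which governs the top of the HOMFLY polynomial (K\'alm\'an--Murakami, K\'alm\'an--Postnikov) and need not be palindromic; the correct root-polytope expression for $P_D(t)$ is the Li--Postnikov-type sum over \emph{all} full-rank column subsets in Theorem \ref{MatroidVersion}, not the slice data of a single triangulation.

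Second, the mixed-volume normalization you rely on is not merely unconstructed but impossible. If $c_k=\binom{d}{k}\,V(B^{[d-k]},C^{[k]})$ for convex bodies $B,C$, then Alexandrov--Fenchel forces ultra-log-concavity of $(c_k)$; but the oriented three-cycle is an alternating dimap (its link is the trefoil), and its coefficient sequence $(1,1,1)$ is not ultra-log-concave, as noted at the end of Section \ref{sec:conj}. So no such pair of bodies exists inside the very class the theorem covers, and the chain ``Lorentzian $\Rightarrow$ ultra-log-concave $\Rightarrow$ log-concave'' breaks at its first link. A realization $c_k=V(B^{[d-k]},C^{[k]})$ \emph{without} the binomial is not excluded, but you give no construction, and producing one is essentially the entire difficulty; the multivariate Lorentzian refinement in \cite{hafner2023logconcavity} exists precisely to circumvent this univariate obstruction. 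Finally, your fallback for ``no internal zeros'' does not work: a $j$-spanning tree must carry the orientations inherited from $D$, so reversing edge orientations takes you outside the objects being counted, and an edge exchange between two spanning trees can change the away-from-root statistic by more than one, so intermediate values of $j$ are not produced by that argument.
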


We 
  conjecture that Theorem \ref{ourlogconc} generalizes as follows:

\begin{conjecture}
\label{conj:log-concave}  
   For any Eulerian digraph $D$, the coefficients of the  polynomial $P_D(t)$ form a log-concave sequence with no internal zeros.
\end{conjecture}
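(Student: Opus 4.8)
The plan is to deduce the conjecture from the theory of Lorentzian polynomials of Br\"and\'en and Huh, using the root polytope picture developed earlier in the paper. Write $n=|V(D)|$ and form the homogenization $P_D^{\mathrm h}(s,t):=\sum_{k=0}^{n-1}c_k(D,r)\,s^{n-1-k}t^{k}$. A bivariate homogeneous polynomial with nonnegative coefficients has a log-concave coefficient sequence with no internal zeros exactly when its normalization (divide the coefficient of $s^{n-1-k}t^{k}$ by $(n-1-k)!\,k!$) is a Lorentzian polynomial; so it suffices to realize $P_D^{\mathrm h}$, up to this normalization, as a volume polynomial. Concretely, if $c_k(D,r)=\binom{n-1}{k}V(\mathcal P[n-1-k],\mathcal Q[k])$ for lattice polytopes $\mathcal P,\mathcal Q$, then the Alexandrov--Fenchel inequalities give log-concavity of $(c_k)$ and the fact that a Lorentzian polynomial has no internal zeros in its support gives the rest. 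The polytopes should come from the root polytope: building on Li and Postnikov and on T\'othm\'er\'esz, the new proof of Theorem~\ref{thm:ourbest} presents $c_k(D,r)$ through a unimodular triangulation of a root polytope $\mathcal Q_D$ attached to $D$ via a totally unimodular matrix, with cells indexed by spanning trees of $D$ and graded by the number of reversed edges.

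\textbf{The concrete target.} A weighted matrix--tree theorem yields the clean identity $P_D(t)=\det\!\bigl(M+t\,M^{\mathsf T}\bigr)$, where $M$ is the reduced Laplacian of $D$ at $r$; here $M+M^{\mathsf T}$ is the reduced Laplacian of the underlying undirected multigraph of $D$, hence positive definite because an Eulerian digraph is connected. Expanding the determinant column by column gives $c_k(D,r)=\sum_{|S|=k}\det M_S$, where $M_S$ is obtained from $M$ by replacing, for each $j\in S$, the $j$-th column with the $j$-th row. The aim is then to identify $\det(sM+tM^{\mathsf T})$ with the volume polynomial $\operatorname{vol}\bigl(s\,\mathcal P+t\,\mathcal Q\bigr)$ of a Minkowski sum of two lattice polytopes $\mathcal P$ and $\mathcal Q$ cut out of $\mathcal Q_D$ by its ``forward'' and ``reversed'' edges, a directed, totally unimodular analogue of the root polytope of a bipartite graph of K\'alm\'an and Postnikov, and to read off the mixed-volume formula above.

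\textbf{Where the difficulty lies.} For alternating dimaps, planarity supplies the bipartite planar dual $G$, and the log-concavity theorem of \cite{hafner2023logconcavity} is in effect this argument performed on the root polytope of $G$; the conjecture demands the statement without planarity or bipartiteness. The obstacle is twofold. First, for a general real matrix $M$ with $M+M^{\mathsf T}\succ 0$ the polynomial $\det(sM+tM^{\mathsf T})$ need not have a log-concave coefficient sequence: its roots lie on the unit circle but can cluster, producing internal zeros, as already happens for the pencil $sM+tM^{\mathsf T}$ with $M=\bigl(\begin{smallmatrix}1&\sqrt2\\0&1\end{smallmatrix}\bigr)$, for which $\det(sM+tM^{\mathsf T})=s^{2}+t^{2}$; hence the total unimodularity of $M$ together with the Eulerian structure and connectivity of $D$ must be used in an essential, not incidental, way. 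Second, even granting that such a polytopal model exists, pinning down the correct directed analogue $\mathcal P,\mathcal Q$ of the bipartite root polytope and proving that their mixed volumes are exactly the $c_k(D,r)$, i.e.\ matching the combinatorics of reversed edges in spanning trees to a Minkowski-additive splitting of $\mathcal Q_D$, is the crux, and I expect it to be the hardest step. A sensible intermediate milestone, provable on its own, is the ``no internal zeros'' half: that $\{k:c_k(D,r)>0\}$ is an interval, which would follow from the support property of mixed volumes once any adequate polytopal model is in hand, or directly from strong connectivity of $D$ via a spanning-tree exchange argument.
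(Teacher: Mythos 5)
You should first be clear that the statement you are proving is Conjecture~\ref{conj:log-concave}: the paper does not prove it, and offers no argument to compare yours against. What the paper does establish is the alternating dimap case (Theorem~\ref{ourlogconc}, imported from earlier work) and, in Section~\ref{sec:conj}, the case of Eulerian digraphs whose reduced Laplacian is symmetric, where the very determinantal formula you invoke (Lemma~\ref{lem:Laplacian}, $P_D(t)=\det(\overline{L(D)}+t\,\overline{L(D)}^T)$, due to Murasugi--Stoimenow) collapses to $(t+1)^{n-1}\det(\overline{L(D)})$. Your text is in any case a strategy rather than a proof -- you yourself flag the construction of $\mathcal P,\mathcal Q$ and the mixed-volume identity as unproven -- so the only substantive question is whether the strategy could succeed.

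It cannot, in the form stated, and the paper itself contains the obstruction. Your plan is to show that the normalized homogenization of $P_D$ is Lorentzian, equivalently to realize $c_k(D,r)=\binom{n-1}{k}V(\mathcal P[n-1-k],\mathcal Q[k])$; for bivariate polynomials the Lorentzian property of the normalization is equivalent to \emph{ultra} log-concavity with no internal zeros, which is strictly stronger than the conjecture (your opening ``exactly when'' conflates the two), and via Alexandrov--Fenchel the mixed-volume identity would force ultra log-concavity. But the oriented three-cycle discussed at the end of Section~\ref{sec:conj} (Figure~\ref{fig:trefoil_not_ultra}) has $P_D(t)=1+t+t^2$, which is log-concave but not ultra log-concave: with $n-1=2$ one would need $\bigl(c_1/\binom{2}{1}\bigr)^2=\tfrac14\geq c_0c_2=1$. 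Hence no lattice polytopes $\mathcal P,\mathcal Q$ with $\det(sM+tM^{\mathsf T})=\operatorname{vol}(s\mathcal P+t\mathcal Q)$ can exist for this $D$, and the Lorentzian target is simply false in general; your $2\times 2$ pencil example $s^2+t^2$ is a symptom of the same phenomenon, here realized by an honest Eulerian digraph. Any viable polytopal route must therefore discard the binomial normalization -- for instance aim at $c_k=V(\mathcal P[n-1-k],\mathcal Q[k])$ itself, or at some other mechanism yielding plain log-concavity -- and neither that construction nor even your proposed ``no internal zeros'' milestone is supplied here or in the paper; the conjecture remains open.
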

  
  In Section \ref{sec:conj}, we prove that Conjecture \ref{conj:log-concave} holds for a special family of (not necessarily planar) Eulerian digraphs.

\subsection{The polynomial $P_D$ in terms of root polytopes} \label{sub:root} Root polytopes of bipartite graphs were defined and extensively studied by Postnikov \cite{P05}. They were related to special alternating links in the works of K\'alm\'an and Murakami \cite{murakami} and  K\'alm\'an and Postnikov \cite{KTP2017}. T\'othm\'er\'esz \cite{T22} recently generalized Postnikov's definition of a root polytope to all oriented co-Eulerian matroids (see Section \ref{sec:root} for definitions). 
Using this generalization, we connect the polynomial $P_D(t)$, as defined in equation \eqref{pdt}, to volumes of root polytopes of oriented co-Eulerian matroids:

\begin{theorem}
    \label{MatroidVersion}
    Let $D$ be an Eulerian digraph, and let $M$ be the oriented graphic matroid associated to $D$.  Let $A$ be a totally unimodular matrix representing $M^*$, the oriented dual of $M$, and let $m$ be the size of a basis of $M^*$.  Then 
\begin{equation} \label{eq} P_D(t) = \sum_{H \text{ has property *}} Vol(Q_{H})(t-1)^{\#col(H)-m} \end{equation}
where a matrix $H$ has property * if it is obtained by deleting a set of columns from $A$ without decreasing the rank of the matrix.
\end{theorem}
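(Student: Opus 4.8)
The plan is to expand both sides in the same basis-indexed way and match term by term. The key tool is T\'othm\'er\'esz's result (which I will invoke from Section~\ref{sec:root}) that for an oriented co-Eulerian matroid $M^*$ represented by a totally unimodular matrix $H$ with $\#col(H)$ columns and rank $m$, the normalized volume $Vol(Q_H)$ counts bases of $M^*$ (equivalently, via matroid duality, spanning trees / bases of $M$), weighted or signed in a way that records the ``defect'' of each basis from being an oriented spanning tree. More precisely, the expected mechanism is: choosing a subset $S$ of columns of the full matrix $A$ representing $M^*$ corresponds, under graphic duality, to choosing a subset of edges of $D$, and the induced matrix $H$ on the complementary columns represents the contraction/deletion matroid whose bases biject with the $k$-spanning trees of $D$ rooted at $r$ for the appropriate $k = k(S)$. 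So the first step is to set up this dictionary carefully: columns of $A$ $\leftrightarrow$ edges of $D$, property-$*$ submatrices $H$ $\leftrightarrow$ the combinatorial data that $P_D(t)$ sums over.

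Second, I would compute the right-hand side by substituting the volume formula: $Vol(Q_H)$ becomes a count of bases of the matroid represented by $H$, and I would reorganize the double sum (over property-$*$ matrices $H$, and then over bases of each $H$) into a single sum over pairs (spanning tree $T$ of $D$, subset $S$ of non-tree-edges of the appropriate type). The factor $(t-1)^{\#col(H)-m}$ expands binomially, and the exponent $\#col(H)-m$ is exactly the number of ``extra'' columns one is free to have deleted or kept — i.e. it tracks the corank-type statistic. The goal is to show that after expanding $(t-1)^{\#col(H)-m} = \sum_j \binom{\#col(H)-m}{j}t^j(-1)^{\#col(H)-m-j}$ and summing over all $H$, the coefficient of $t^k$ collapses (via the binomial/inclusion–exclusion identity $\sum_i (-1)^i\binom{n}{i} = 0$ for $n>0$) to precisely $c_k(D,r)$, the number of $k$-spanning trees. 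Here Theorem~\ref{thm:ourbest} guarantees root-independence, so it is legitimate to fix an arbitrary root $r$ throughout; I would fix $r$ at the outset.

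Third, I would verify the bookkeeping that matches the exponent of $t$ on the left (the parameter $k$ in $c_k(D,r)t^k$) with the exponent produced on the right after the binomial expansion. This is the step where the sign reversals defining $k$-spanning trees (reverse exactly $k$ edges to get an oriented spanning tree) must be reconciled with the orientation data encoded in the totally unimodular matrix $A$ — each sign choice in expressing a circuit of $M^*$ in terms of a basis corresponds to an edge reversal, and the TU property ensures all these coefficients are $\pm 1$, so the count is clean. I expect this matching — showing that the net coefficient of $t^k$ on the right is $\sum_{H}(-1)^{\#col(H)-m-k}\binom{\#col(H)-m}{k}(\text{bases of }H) = c_k(D,r)$ — to be the main obstacle, since it requires knowing exactly which bases of $H$ are being counted with which orientation defect, and that is the content that Postnikov's and T\'othm\'er\'esz's theorems must be pushed to deliver in signed form. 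A clean way to handle it is to prove the identity first for the ``all columns present'' matrix $H=A$ (where $P_D$ and the single volume term should essentially agree up to the $(t-1)^0=1$ factor and the root-independence of Theorem~\ref{thm:ourbest}), and then argue the general case by an inclusion–exclusion / deletion induction on the number of deleted columns, using that deleting a non-coloop column of $H$ relates $Vol(Q_H)$ to volumes of the deletion and contraction in a manner compatible with the $(t-1)$ bookkeeping.
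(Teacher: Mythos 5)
Your overall dictionary is the right one (columns of $A$ $\leftrightarrow$ edges of $D$; property-$*$ submatrices $\leftrightarrow$ contractions $D/E'$ by acyclic edge sets $E'$, since deleting columns in $M^*$ without dropping rank is dual to contracting an independent, i.e.\ acyclic, set in $M$), and the plan of binomially expanding $(t-1)^{\#col(H)-m}$ and matching coefficients of $t^k$ is indeed how the paper proceeds. But there are two genuine gaps. First, you misstate what the volume counts: $Vol(Q_H)$ is not a signed or weighted count of all bases of the matroid represented by $H$, nor does it record any ``orientation defect.'' By T\'othm\'er\'esz's triangulation (Corollary \ref{VolMatroidCase}), $Vol(Q_{H_{D/E'}})$ equals exactly $c_0(D/E',r)$, the plain number of oriented spanning trees of the contraction --- a specific subset of the bases, counted without signs. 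Consequently your hope of ``pushing Postnikov's and T\'othm\'er\'esz's theorems to deliver in signed form'' points at machinery that neither exists nor is needed.

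Second, and more importantly, the step you defer as ``the main obstacle'' is the actual content of the proof, and your proposal gives no mechanism for it. What is needed is a purely graph-theoretic inclusion--exclusion identity expressing $c_k(D,r)$ in terms of $c_0$ of contractions, namely
\[
c_k(D,r)=\sum_{i=0}^{k}(-1)^i\sum_{\substack{\text{acyclic } E' \subset E(D) \\ |E'|=k-i}} \binom{|V(D)|-1-(k-i)}{i}\,c_0(D/E',r),
\]
which is the paper's Theorem \ref{CountingArbs} (proved by an inclusion--exclusion on which tree edges are contracted), combined with the palindromicity $c_k(D,r)=c_{|V(D)|-1-k}(D,r)$ from Theorem \ref{thm:ourbest} to reindex the binomial expansion --- note you invoke Theorem \ref{thm:ourbest} only for root-independence, but the symmetry is also used. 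Without establishing an identity of this type, the coefficient-of-$t^k$ collapse you describe cannot be carried out; the generic cancellation $\sum_i(-1)^i\binom{n}{i}=0$ is not enough, because the volumes $c_0(D/E',r)$ vary with $E'$. Your fallback (prove it for $H=A$ and induct by deleting columns via a deletion--contraction relation on root-polytope volumes) is also not developed: you would need a precise volume recursion compatible with the $(t-1)$ bookkeeping, and you neither state nor prove one.
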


Here $Q_H$ is the root polytope of the matrix $H$ (see Definition \ref{def:rootpolytope}). An immediate corollary of the above theorem is:

\begin{corollary} \label{cor:alexpoly} The Alexander polynomial $\Delta_{L_{G}}(-t)$ of the special alternating link associated to the planar bipartite graph $G$ can be expressed as:
\begin{equation} \label{lipost} \Delta_{L_{G}}(-t) \sim \sum_{\substack{H \subset G \\ H \text{ connected}}} Vol(Q_{H})(t-1)^{|\Edg(H)|-|\Ver(G)|+1}.\end{equation}
\end{corollary}

We note that Corollary \ref{cor:alexpoly} can also be deduced putting together the work of Li and Postnikov \cite{slicing} and K\'alm\'an, M\'esz\'aros, and Postnikov \cite{trap}. Indeed,  the work of Li and Postnikov \cite{slicing} was the original inspiration for our Theorem \ref{MatroidVersion}. 
The right-hand side of equation \eqref{lipost}  was introduced by Li and Postnikov \cite{slicing} in the context of computing volumes of slices of zonotopes. They proved the equation:

\begin{equation}\label{eq:lipost} f_{M_{\vec{G}}}(t)=\sum_{\substack{H \subset G \\ H \text{ connected}}} Vol(Q_{H})(t-1)^{|\Edg(H)|-|\Ver(G)|+1}\end{equation} for a polynomial $f_{M_{\vec{G}}}(t)$ associated to the oriented graphic matroid of a planar bipartite graph with orientation $\vec{G}$ where the edges are directed from one bipartition to the other. The details of the definition of this polynomial can be found in \cite{slicing} or in \cite{trap}. In turn,  K\'alm\'an, M\'esz\'aros, and Postnikov \cite{trap} prove in their Theorem 5.5 that:

\begin{equation} \label{eq:trap} \Delta_{L_{G}}(-t)\sim f_{M_{\vec{G}}}(t).\end{equation}

Putting equations \eqref{eq:lipost} and \eqref{eq:trap} together, one readily obtains  Corollary \ref{cor:alexpoly}.

 \subsection{On the Kauffman and Crowell state models of the Alexander polynomial} 
  The Alexander polynomial was initially defined by James Alexander II as a determinant \cite{alexander1928topological}. Crowell, in 1959, gave the first combinatorial state model for Alexander polynomials of alternating links \cite{Crowell}. In John Conway's paper \cite{conway1970enumeration}, he introduces the Conway polynomial which satisfies a skein relation and is related to the Alexander polynomial by a substitution. In 1982, Kauffman defined the notion of \textit{states of a link universe}, obtained from a link diagram. Originally, these states were used to compute a polynomial that could be specialized to the Conway polynomial. Later, in the most recent edition of \cite{K06}, released in 2006, Kauffman described a way of using this combinatorial state model to directly compute the Alexander polynomial of any link. 
  
  \medskip
  
Theorem \ref{thm2}, Murasugi and Stoimenow's result expressing $P_D$ in terms of $k$-spanning trees for alternating dimaps $D$, is based on Kauffman's model. We can view Theorem \ref{thm2} as another way of expressing the Kauffman states in the special alternating link case. On the other hand, the present authors work establishing Fox's conjecture for special alternating links, as in Theorem \ref{ourlogconc}, is based on and heavily inspired by the Crowell state model for alternating links. It is, thus, natural to wonder about a weight-preserving combinatorial bijection 
   between Crowell's and Kauffman's models. In Theorem \ref{maintheorem}, we construct such a bijection between the two models in the case of special alternating links.

   \medskip
   
\noindent \textbf{Roadmap of the paper.} 
In Section \ref{sec:k}, we study the enumeration of $k$-spanning trees of Eulerian digraphs which we use in Section \ref{sec:root} to prove Theorem \ref{MatroidVersion}. In Section \ref{sec:thm2}, we show that for alternating dimaps $D$, $P_D(t)$ is the Alexander polynomial of a special alternating link. The proof we give is inspired by Murasugi and Stoimenow's proof but is written in a graph theoretic language.  In Section \ref{sec:conj}, we discuss  possible generalizations of Fox's long-standing conjecture that the sequence of absolute values of coefficients of the Alexander polynomial of an alternating link is trapezoidal.  We conclude in Section \ref{sec:bij} by constructing an explicit weight-preserving bijection between the Crowell and Kauffman states of any special alternating link.

\medskip

\noindent Instead of having a lengthy background section, we will include the necessary background throughout the paper.

\section{On $k$-spanning trees of Eulerian digraphs}
\label{sec:k}
In this section, we study $k$-spanning trees of Eulerian digraphs, as defined in Section \ref{sub:best}. We will leverage the results from this section in Section \ref{sec:root} in order to connect the Alexander polynomials of Eulerian digraphs to volumes of root polytopes of oriented co-Eulerian matroids.

\medskip

 Recall from the introduction that $c_k(D,r)$ denotes the number of $k$-spanning trees rooted at $r$ in the  Eulerian digraph $D$.  We will give an alternative proof of the following theorem of Murasugi and Stoimenow:

\begin{ourbest}(\cite[Proposition 1]{even})
 For any $k \in \mathbb{N}$, the number of $k$-spanning trees of an Eulerian digraph  $D$ is independent of the choice of root. Moreover, the sequence $c_0(D,r)$, $\ldots$, $ c_{|\Ver(D)|-1}(D,r)$ is palindromic. Equivalently: $c_k(D,r)=c_{|\Ver(D)|-1-k}(D,r)$ for all $k\in \{0, 1, \ldots, |\Ver(D)|-1\}$.
\end{ourbest}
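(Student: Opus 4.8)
The plan is to reduce the whole statement to a single determinantal identity for the generating function $P_D^{\,r}(t):=\sum_{k\ge 0}c_k(D,r)\,t^k$, and then extract root‑independence and palindromicity from symmetries of the matrix that appears.

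First I would build a weighted auxiliary digraph $\widetilde D$ on the same vertex set: for every edge $e$ of $D$ with $\init(e)=u,\ \fin(e)=v$, insert an arc $u\to v$ of weight $1$ and an arc $v\to u$ of weight $t$. The point is that oriented spanning trees of $\widetilde D$ rooted at $r$ (in the sense of the statement: every tree edge directed toward $r$) correspond bijectively to spanning trees $T$ of the underlying graph of $D$: orienting the edges of $T$ toward $r$ produces the unique such orientation, each of its arcs is realized in $\widetilde D$ in exactly one way, and the weight of the resulting tree is $t^{k}$, where $k$ is the number of edges of $T$ whose $D$‑orientation points away from $r$ — precisely the number of reversals needed to make $T$ an oriented spanning tree rooted at $r$. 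Hence, by the weighted directed Matrix–Tree theorem,
\[
P_D^{\,r}(t)=\det\bigl((\Lap+t\,\Lap^{\!\top})_{\widehat r}\bigr),
\]
where $\Lap$ is the out‑degree Laplacian of $D$ and $(\cdot)_{\widehat r}$ denotes deletion of row $r$ and column $r$. Checking that the weighted Laplacian of $\widetilde D$ really equals $\Lap+t\,\Lap^{\!\top}$ is where the Eulerian hypothesis enters: the $(u,u)$‑entry comes out as $\odeg_D(u)+t\,\indeg_D(u)$, which equals $(1+t)\odeg_D(u)$ exactly because $\indeg_D(u)=\odeg_D(u)$.

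Next, for root‑independence, I would note that since $D$ is Eulerian the matrix $M:=\Lap+t\,\Lap^{\!\top}$, with entries in $\mathbb{Q}(t)$, has all row sums and all column sums equal to zero. A square matrix $M$ with $M\mathbf 1=0$ and $\mathbf 1^{\!\top}M=0$ is singular, its adjugate has columns in $\ker M$ and rows in the left null space of $M$ — both spanned by $\mathbf 1$ — and so the adjugate is a scalar multiple of $\mathbf 1\mathbf 1^{\!\top}$ (or is zero). Either way all diagonal cofactors of $M$ coincide, i.e. $\det(M_{\widehat r})$ is the same for every $r$; write $P_D(t)$ for this common polynomial, and read off that every $c_k(D,r)$ is independent of $r$. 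For palindromicity, a spanning tree of $D$ has $|\Ver(D)|-1$ edges, so $\deg P_D\le |\Ver(D)|-1$; substituting $t\mapsto 1/t$ and clearing denominators gives $\Lap+t^{-1}\Lap^{\!\top}=t^{-1}(t\Lap+\Lap^{\!\top})=t^{-1}(\Lap+t\,\Lap^{\!\top})^{\!\top}$, and since transposition and scalar multiplication commute with deleting the $r$‑th row and column and determinants are transpose‑invariant, $P_D(1/t)=t^{-(|\Ver(D)|-1)}P_D(t)$. Comparing coefficients in $t^{\,|\Ver(D)|-1}P_D(1/t)=P_D(t)$ yields $c_k(D,r)=c_{|\Ver(D)|-1-k}(D,r)$.

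The step I expect to be the main obstacle is the first one: pinning down the orientation conventions so that "oriented spanning tree rooted at $r$" matches the in‑tree side of the directed Matrix–Tree theorem, making the bijection with the weighted trees of $\widetilde D$ airtight in the presence of parallel edges and loops, and verifying $\widetilde{\Lap}=\Lap+t\,\Lap^{\!\top}$ — which, as noted, is exactly where being Eulerian is indispensable. Once that determinantal formula is in hand, root‑independence and palindromicity follow quickly from, respectively, the double kernel condition and the transpose symmetry of $\Lap+t\,\Lap^{\!\top}$.
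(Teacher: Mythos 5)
Your proof is correct, but it takes a genuinely different route from the one in the paper. You establish the determinantal formula $P_D(t)=\det\bigl((\Lap+t\Lap^{\top})_{\widehat r}\bigr)$ via the weighted directed Matrix--Tree theorem applied to the doubled digraph $\widetilde D$, and then read off root-independence from the fact that a matrix with zero row and column sums has all diagonal cofactors equal, and palindromicity from the transpose symmetry under $t\mapsto 1/t$; all of these steps check out (the bijection between in-trees of $\widetilde D$ rooted at $r$ and spanning trees of $D$ weighted by the number of reversals is airtight, since using both arcs of a doubled edge would create a $2$-cycle, and the identification $\widetilde\Lap=\Lap+t\Lap^{\top}$ is exactly where the Eulerian hypothesis enters, as you note). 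In effect you have re-derived the Murasugi--Stoimenow determinantal expression, which the paper records separately as Lemma \ref{lem:Laplacian} (quoted from \cite{even}, with a harmless sign-convention difference) but does not use to prove Theorem \ref{thm:ourbest}. The paper instead argues combinatorially: the BEST theorem gives root-independence of $c_0$; reversing Eulerian tours gives $c_0(D,r)=c_0(D^{T},r)$ (Corollary \ref{flip0}); an inclusion--exclusion identity (Theorem \ref{CountingArbs}) expresses $c_k(D,r)$ purely in terms of $c_0(D/E',r)$ over acyclic edge sets $E'$, whence $c_k(D,r)=c_k(D^{T},r)$, and the duality exchanging $k$-spanning trees of $D$ with $(|\Ver(D)|-1-k)$-spanning trees of $D^{T}$ finishes the argument. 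Your linear-algebra route is shorter and more self-contained, and yields the determinantal formula as a by-product; the paper's longer combinatorial route is chosen because Theorem \ref{CountingArbs} is reused in Section \ref{sec:root} (via Corollary \ref{CountingArbsVol}) to express the coefficients $c_k$ in terms of volumes of root polytopes, which is the main point of Theorem \ref{MatroidVersion}.
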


To prove Theorem \ref{thm:ourbest}, we make use of the BEST Theorem (see Section \ref{sub:best}) and some of its consequences. 

\begin{theorem} (\cite[Theorem 10.2]{stanley2013algebraic},  \cite{best2, best1})
\label{ToursToTrees}
    Let $D$ be an Eulerian digraph, and fix any edge $e \in \Edg(D)$ such that $\init(e)=r$.  Let $\epsilon(D,e)$ denote the number of Eulerian tours of $D$ beginning with $e$.  Then
    \[\epsilon(D,e)=c_0(D,r)\prod_{u\in \Ver(D)}(\odeg(u)-1)!\]
\end{theorem}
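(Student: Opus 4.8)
The plan is to prove this classical statement — essentially the BEST Theorem itself — directly via the ``last-exit'' bijection, rather than bootstrapping from Theorem~\ref{best} (which is in fact a consequence of what we are proving). Fix $e$ with $\init(e)=r$. I would set up a bijection between Eulerian tours of $D$ beginning with $e$ and pairs $(T,\sigma)$ in which $T$ is an oriented spanning tree of $D$ rooted at $r$ and $\sigma=(\sigma_u)_{u\in\Ver(D)}$ assigns to each $u$ a linear order $\sigma_u$ on the out-edges at $u$, subject to: $e$ is $\sigma_r$-minimal, and for each $u\ne r$ the unique edge of $T$ with initial vertex $u$ is $\sigma_u$-maximal. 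Once this bijection is in hand the theorem follows by counting pairs: there are $c_0(D,r)$ choices for $T$, then $(\odeg(r)-1)!$ admissible orders $\sigma_r$ (the position of $e$ is forced), and $(\odeg(u)-1)!$ admissible orders $\sigma_u$ for each $u\ne r$ (the position of the $T$-edge is forced), so $\epsilon(D,e)=c_0(D,r)\prod_{u\in\Ver(D)}(\odeg(u)-1)!$.

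For the map from a tour $W$ (a closed walk based at $r$ using each edge once) to a pair: let $\sigma_u$ be the order in which the out-edges at $u$ are traversed by $W$, and for $u\ne r$ let $t(u)$ be the \emph{last} out-edge at $u$ used by $W$; put $T=\{t(u):u\ne r\}$. The crucial lemma is that this $T$ is an oriented spanning tree rooted at $r$: it has $|\Ver(D)|-1$ edges, exactly one leaving each non-root vertex, so I only need acyclicity. If $T$ contained a directed cycle $C$ (which cannot meet $r$), let $t(u)$, from $u$ to $v$, be the edge of $C$ that $W$ traverses latest; then $v\ne r$ lies on $C$, so $t(v)$ was used strictly earlier, meaning that when $W$ arrives at $v$ via $t(u)$ every out-edge at $v$ has already been used, so $W$ must stop there — impossible, since $W$ stops at $r\ne v$. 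Admissibility of the resulting pair is immediate from the construction.

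For the map from an admissible pair $(T,\sigma)$ to a tour: run the greedy walk from $r$ starting with $e$, always leaving the current vertex by the $\sigma$-least unused out-edge, and stopping when stuck. A routine in/out-degree count (using $\indeg=\odeg$) shows the walk can only get stuck at $r$, so it is a closed walk based at $r$. If it missed some edge, let $E'$ be the set of unused edges and $U$ the set of vertices incident to $E'$; since the used edges form a closed walk, the subdigraph with edge set $E'$ is balanced at every vertex, so each $z\in U$ has an unused out-edge, hence by the ``$T$-edge is $\sigma_z$-maximal'' rule the $T$-edge leaving $z$ is unused whenever $z\ne r$. Walking along $T$-edges from any $z\in U$ toward $r$ therefore stays within $U$ and reaches $r$, forcing $r$ to have an unused out-edge — contradicting that the walk got stuck at $r$. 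So the greedy walk is an Eulerian tour beginning with $e$.

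It then remains to verify that the two maps are mutually inverse: the greedy walk built from the data extracted from a tour $W$ retraces $W$ (an easy induction on the length of the common prefix, since at each vertex the greedy choice is exactly the next edge $W$ used), and extracting last-exits and traversal orders from the greedy tour associated to $(T,\sigma)$ returns $(T,\sigma)$. I expect the acyclicity of the last-exit set $T$, together with the dual exhaustiveness argument for the greedy walk, to be the only steps with real content — both reduce to the single observation that a $\sigma_u$-maximal out-edge can never be the one that gets ``blocked'' at a non-root vertex — so that is where I would concentrate the care.
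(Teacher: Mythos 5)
Your proof is correct, but note that the paper does not prove this statement at all: it is quoted as a known result (the BEST theorem, citing Stanley's Theorem 10.2 and de Bruijn--van Aardenne-Ehrenfest and Smith--Tutte), so there is no in-paper argument to compare against. Your last-exit bijection between tours beginning with $e$ and pairs $(T,\sigma)$, with the acyclicity argument for the last-exit set and the exhaustiveness argument for the greedy walk, is essentially the standard proof found in the cited source, and the counting $c_0(D,r)\prod_u(\odeg(u)-1)!$ follows exactly as you say.
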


\begin{best} (\cite[Corollary 10.3]{stanley2013algebraic}, \cite{best2, best1})
\label{indepRoot0}
For any Eulerian digraph $D$, $c_0(D,r)$ is independent of the root $r \in \Ver(D)$.
\end{best}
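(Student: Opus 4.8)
The plan is to derive this directly from Theorem~\ref{ToursToTrees}, which already isolates the quantity $c_0(D,r)$. The only extra ingredient needed is the observation that the count $\epsilon(D,e)$ of Eulerian tours beginning with a fixed edge $e$ does not actually depend on the choice of $e$.

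First I would make that observation precise. An Eulerian tour of $D$ beginning with $e$ is a sequence $e=e_1,e_2,\ldots,e_m$, where $m=|\Edg(D)|$, of pairwise distinct edges with $\fin(e_i)=\init(e_{i+1})$ for $1\le i<m$ and $\fin(e_m)=\init(e_1)$. Forgetting the starting point, such a sequence determines a cyclic sequence of edges — an ``unrooted'' Eulerian tour — and conversely, because all $m$ edges of an Eulerian tour are distinct, every unrooted Eulerian tour contains $e$ exactly once and so has a unique representative beginning with $e$. Hence $\epsilon(D,e)$ equals the total number of unrooted Eulerian tours of $D$, which is visibly independent of $e$.

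Next I would apply Theorem~\ref{ToursToTrees}. If $|\Ver(D)|=1$ the statement is trivial, so assume $D$ has at least one edge; since $D$ is Eulerian, $\odeg(v)\ge 1$ for every $v\in\Ver(D)$, so for any chosen root $r$ there is an edge $e$ with $\init(e)=r$. Theorem~\ref{ToursToTrees} then yields
\[
\epsilon(D,e)=c_0(D,r)\prod_{u\in\Ver(D)}(\odeg(u)-1)!.
\]
By the previous paragraph the left-hand side does not depend on $r$, and the product on the right is a fixed nonzero integer not depending on $r$ either; dividing, we conclude that $c_0(D,r)$ is the same for every choice of root $r$.

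I do not expect a genuine obstacle here: the substance lies entirely in Theorem~\ref{ToursToTrees}, and the remaining step — that cutting an unrooted Eulerian tour at a prescribed edge is a bijection onto the Eulerian tours starting with that edge — is immediate from the fact that an Eulerian tour traverses each edge exactly once. The only point deserving a word of care is the degenerate one-vertex (edgeless) case, which is dispatched separately as above.
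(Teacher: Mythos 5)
Your proof is correct and follows the same route the paper intends: the statement is quoted there (from Stanley's Corollary 10.3 and the BEST theorem papers) precisely as the consequence of Theorem~\ref{ToursToTrees}, namely that $\epsilon(D,e)$ counts unrooted Eulerian tours and hence is independent of $e$, so dividing by the fixed factor $\prod_{u}(\odeg(u)-1)!$ gives root-independence of $c_0(D,r)$. Your handling of the degenerate one-vertex case and the existence of an edge out of any chosen root is a fine, if routine, addition.
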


Let $D^T$ denote the \textbf{transpose} of $D$:  the digraph obtained by reversing the direction of each edge in $D$. 

\begin{corollary}
\label{flip0}
For any Eulerian digraph $D$ and $r \in \Ver(D)$, $c_0(D,r)=c_0(D^T,r)$.  In particular, $c_0(D,r)=c_{|\Ver(D)|-1}(D,r)$.
\end{corollary}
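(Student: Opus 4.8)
The plan is to prove the first equality $c_0(D,r)=c_0(D^T,r)$ directly from Theorem~\ref{ToursToTrees}, and then deduce the second equality as a combination of this fact with Corollary~\ref{indepRoot0}. For the first equality, I would fix a vertex $r$ and an edge $e$ with $\init(e)=r$ in $D$; reversing all edges of $D$ sends $e$ to an edge $e^T$ of $D^T$ with $\fin(e^T)=r$, and more to the point, reversing every edge of an Eulerian tour of $D$ that begins with $e$ produces an Eulerian tour of $D^T$ that \emph{ends} with $e^T$, equivalently (reading the closed tour cyclically from the next edge) an Eulerian tour of $D^T$ beginning with the edge $e'$ out of $r$ that immediately followed $e$ in the original tour. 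This reversal is a bijection on the level of (cyclic) Eulerian tours, so $D$ and $D^T$ have the same total number of Eulerian tours. Since $D^T$ is again Eulerian with $\odeg_{D^T}(u)=\indeg_D(u)=\odeg_D(u)$ at every $u$, the product $\prod_{u}(\odeg(u)-1)!$ appearing in Theorem~\ref{ToursToTrees} is literally the same for $D$ and $D^T$. Summing $\epsilon(D,e)$ over the $\odeg(r)$ choices of starting edge $e$ out of $r$ gives the total number of Eulerian tours, so $\odeg(r)\cdot c_0(D,r)\prod_u(\odeg(u)-1)! $ equals the analogous quantity for $D^T$, and dividing by the common nonzero factor $\odeg(r)\prod_u(\odeg(u)-1)!$ yields $c_0(D,r)=c_0(D^T,r)$.

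For the second equality, observe that an oriented spanning tree of $D^T$ rooted at $r$ — a spanning tree in which every vertex has a unique directed path \emph{to} $r$ in $D^T$ — corresponds, upon reversing all its edges back into $D$, to a spanning tree of $D$ in which every vertex has a unique directed path \emph{from} $r$, i.e.\ an arborescence rooted at $r$, i.e.\ a $(|\Ver(D)|-1)$-spanning tree rooted at $r$. Thus $c_0(D^T,r)=c_{|\Ver(D)|-1}(D,r)$, and combining with $c_0(D,r)=c_0(D^T,r)$ gives $c_0(D,r)=c_{|\Ver(D)|-1}(D,r)$.

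I do not expect a serious obstacle here; the one point that needs care is the cyclic bookkeeping in the tour-reversal bijection — an Eulerian tour ``beginning with $e$'' is most naturally a closed walk, and reversal turns ``begins with $e$'' into ``ends with $e^T$,'' so one must either phrase Theorem~\ref{ToursToTrees} cyclically or, as above, pass through the total count $\sum_{e:\,\init(e)=r}\epsilon(D,e)$ to sidestep the choice of basepoint entirely. Everything else is a direct matching of definitions under edge reversal together with the observation that transposition preserves the Eulerian property and all out-degrees.
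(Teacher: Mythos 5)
Your proposal is correct and follows essentially the same route as the paper: the first equality comes from the edge-reversal bijection on Eulerian tours combined with Theorem~\ref{ToursToTrees}, and the second from the observation that reversing edges exchanges oriented spanning trees of $D^T$ with arborescences ($(|\Ver(D)|-1)$-spanning trees) of $D$. Your extra bookkeeping via $\sum_{e:\,\init(e)=r}\epsilon(D,e)$ is a slightly more careful way of handling the basepoint than the paper's direct claim $\epsilon(D,e)=\epsilon(D^T,f)$, but it is the same underlying argument.
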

\begin{proof}
Observe that the Eulerian tours of $D^T$ are precisely the Eulerian tours of $D$ with the edges listed in reverse.  In particular, if $e \in \Edg(D)$ has initial vertex $r$, then $\epsilon(D,e)=\epsilon(D^T,f)$ for any $f \in \Edg(D^T)$ with $\init(f)=r$.  Theorem \ref{ToursToTrees} therefore implies that $c_0(D,r)=c_0(D^T,r)$.  

The second statement follows by observing that $0$-spanning trees of $D$ (oriented spanning trees) are precisely $(|\Ver(D)|-1)$-spanning trees (arborescences) of $D^T$ and vice-versa.
\end{proof}

We note that Corollary \ref{flip0} also follows from  \cite[Theorem 4.9]{T22}.

For an arbitrary digraph $D$ and subset $E' \subset \Edg(D)$, let $D \setminus E'$ denote the graph obtained from $D$ by deleting the edges in $E'$, and let $D/E'$ denote the graph obtained from $D$ by contracting the edges in $E'$.  A \textbf{minor} of $D$ is any graph which can be obtained from $D$ by deleting and/or contracting edges.

\begin{theorem}
\label{CountingArbs}
    Let $D$ be an Eulerian digraph, and fix any $r \in \Ver(D)$.  Then
    \[c_k(D,r)=\sum_{i=0}^{k}(-1)^i\sum_{\substack{\text{\emph{cycle-free} } E' \subset \Edg(D) \\ |E'|=k-i}} \binom{|\Ver(D)|-1-(k-i)}{i}c_0(D/E',r) \]
\end{theorem}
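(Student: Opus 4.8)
The plan is to express $c_k(D,r)$ in terms of oriented spanning trees (the $k=0$ case, which is controlled by Corollary \ref{flip0} and the BEST machinery) by an inclusion–exclusion over which edges are the "reversed" edges of a $k$-spanning tree. Recall that a $k$-spanning tree rooted at $r$ is a spanning tree $T$ together with a choice of $k$ edges of $T$ whose reversal turns $T$ into an oriented spanning tree rooted at $r$; equivalently, $T$ is a spanning tree with exactly $k$ edges oriented "the wrong way" relative to the unique orientation toward $r$. First I would set up the bookkeeping: for a spanning tree $T$, orient each edge toward $r$ to get the oriented spanning tree $\vec T$, and let $r(T)$ be the number of edges of $T$ whose given orientation in $D$ disagrees with $\vec T$. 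Then $c_k(D,r)$ counts spanning trees $T$ with $r(T)=k$.

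The key step is to count, for a fixed acyclic edge set $E' \subset E(D)$ with $|E'| = k-i$, the oriented spanning trees $\vec T$ of $D$ rooted at $r$ that contain $E'$ with the edges of $E'$ oriented \emph{against} $\vec T$ (i.e., $E'$ sits inside $T$ as a subset of the reversed edges) — this is exactly $c_0(D/E', r)$, since contracting $E'$ (which is acyclic, so contracting does not create loops that would obstruct being a tree) and counting oriented spanning trees of the contraction biject with oriented spanning trees of $D$ containing $E'$ in reversed position. Then a spanning tree $T$ with $r(T) = k$ is counted, in $\sum_{E' \text{ acyclic}, |E'| = k-i} c_0(D/E',r)$, once for each $(k-i)$-subset of its $k$ reversed edges, i.e. with multiplicity $\binom{k}{k-i} = \binom{k}{i}$. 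So $\sum_{E'} c_0(D/E',r)$ over acyclic $E'$ of size $k-i$ equals $\sum_{j \ge k-i} \binom{j}{k-i} c_j(D,r)$. Inverting this triangular system — a standard binomial inversion — should recover $c_k(D,r)$, and the $(-1)^i \binom{|V(D)|-1-(k-i)}{i}$ coefficient is precisely what such an inversion produces (note $|V(D)|-1-(k-i)$ is the number of non-reversed edges of a would-be $(k-i)$-spanning tree, which is the right index bound for the alternating sum to telescope correctly).

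The main obstacle I anticipate is the bijection in the key step, specifically making sure that "$\vec T$ is an oriented spanning tree of $D$ rooted at $r$ containing $E'$ in reversed position" corresponds cleanly to "oriented spanning tree of $D/E'$ rooted at the image of $r$," including the subtlety that $E'$ must be acyclic (else the contraction is not a valid digraph minor with a spanning tree) and that contracting a directed edge does not interfere with the uniqueness-of-path-to-$r$ condition. One must check that every oriented spanning tree of $D/E'$ lifts uniquely — the preimage of a spanning tree of $D/E'$ together with all of $E'$ is a spanning tree of $D$, and orienting the lifted part toward $r$ while keeping $E'$ as given yields an oriented spanning tree rooted at $r$ exactly when the combined structure has a unique directed path to $r$, which holds because $E'$ is acyclic. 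Once this correspondence and the multiplicity count are nailed down, the rest is the binomial inversion, which I would either cite or verify by a short induction on $k$ using the identity $\sum_i (-1)^i \binom{a-(k-i)}{i}\binom{k-i}{\ell} = [\ell = k]$ for the relevant range of $a$.
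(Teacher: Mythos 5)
There is a genuine gap in your key step: you identify $c_0(D/E',r)$ with the number of spanning trees $T$ of $D$ containing $E'$ such that $E'$ sits \emph{inside} the set of reversed edges of $T$, and this containment is backwards. Pulling a $0$-spanning tree of $D/E'$ back through the contraction gives a spanning tree $T \supseteq E'$ all of whose edges \emph{outside} $E'$ point toward $r$; the edges of $E'$ themselves may point either way. So $c_0(D/E',r)$ counts spanning trees $T \supseteq E'$ with $E_{\text{away}}(T) \subseteq E'$, not those with $E' \subseteq E_{\text{away}}(T)$. A two-vertex example already breaks your identity: let $D$ be the directed $2$-cycle with edges $a\colon 1\to 2$, $b\colon 2\to 1$ and root $r=1$, so $c_0=c_1=1$. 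For $E'=\{b\}$ no spanning tree has $b$ as a reversed edge, yet $c_0(D/\{b\},r)=1$; summing over the two one-edge sets gives $c_0(D/\{a\},r)+c_0(D/\{b\},r)=2$, whereas your claimed relation $\sum_{|E'|=m}c_0(D/E',r)=\sum_{j\ge m}\binom{j}{m}c_j(D,r)$ predicts $1$. Consequently the triangular system you propose to invert is not the right one, and inverting the system you actually wrote would produce coefficients $(-1)^{m-k}\binom{m}{k}$, not the $(-1)^i\binom{|V(D)|-1-(k-i)}{i}$ in the theorem, so the final step does not go through as asserted.

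The overall strategy (contract acyclic edge sets, reduce to $c_0$, invert a binomial-triangular system) is sound and is essentially what the paper does via inclusion--exclusion; only the orientation bookkeeping needs fixing. The correct relation is $\sum_{\text{acyclic }E',\,|E'|=m}c_0(D/E',r)=\sum_{j=0}^{m}\binom{|V(D)|-1-j}{m-j}c_j(D,r)$: a $j$-spanning tree $T$ with $j\le m$ arises exactly from the sets $E'$ with $E_{\text{away}}(T)\subseteq E'\subseteq E(T)$, and there are $\binom{|V(D)|-1-j}{m-j}$ of them (such $E'$ are automatically acyclic, being subsets of a tree). Inverting this lower-triangular system gives precisely the stated formula; the needed identity amounts to $\sum_{i=0}^{m}(-1)^i\binom{a+i}{i}\binom{a+m}{m-i}=0$ for $m\ge 1$ with $a=|V(D)|-1-k$, which follows from Vandermonde. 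Your remarks about acyclicity of $E'$ and about lifting spanning trees through the contraction are correct and are exactly the points worth checking; it is the direction of the containment in the key lemma, and hence the multiplicity count and the inversion, that must be repaired.
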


\begin{proof}
    Observe that if $T$ is a $0$-spanning tree in $D/E'$ for some cycle-free $E' \subset \Edg(D)$ with $|E'|=k$, then $T_D:=(\Ver(D),\Edg(T) \cup E')$ is an $i$-spanning tree in $D$ rooted at $r$ for some $0 \leq i\leq k$. Similarly, if $T_D$ is an $i$-spanning tree in $D$ and $\Edg_{\text{away}}(T_D)$ is the set of edges of $T_D$ directed away from the root, then $T_D/E'$ is a $0$-spanning tree in $D/E'$ for any cycle-free $E' \supset \Edg_{\text{away}}(T_D/E')$.  We can thus write 
    \[c_k(D,r)= \sum_{\substack{\text{cycle-free } E' \subset \Edg(D) \\ |E'|=k}}c_0(D/E',r)  - \sum_{\substack{\text{cycle-free } E' \subset \Edg(D) \\ |E'|=k}} c_{< k}(D,E',r)   \]
    where $c_{< k}(D,E',r)$ is the number of $0
    $-spanning trees $T$ in $D/E'$ such that there exists $e \in E'$ where $\Edg(T) \cup \{e\}$ forms a 0-spanning tree in $D/(E'-\{e\})$.  Using inclusion-exclusion, we rewrite this as follows:
    \[c_k(D,r)= \sum_{\substack{\text{cycle-free } E' \subset \Edg(D) \\ |E'|=k}}c_0(D/E',r)  - \sum_{\substack{\text{cycle-free } E' \subset \Edg(D) \\ |E'|=k}} \sum_{i=1}^k \sum_{\substack{F \subset E' \\ |F|=i}} (-1)^{i-1} c_0(D/(E'-F),r).\]
    Observe that for any subset $F' \subset \Edg(D)$ with $|F'|=k-i$ and any $0$-spanning tree $T$ in $D/F'$, contracting any $i$ edges of $T$ produces a $0$-spanning tree of $D/E'$ for some $E'$ with $|E'|=k$.  Relabeling $F':=E'-F$ in the above equation thus yields
    \[c_k(D,r)= \sum_{\substack{\text{cycle-free } E' \subset \Edg(D) \\ |E'|=k}}c_0(D/E',r)  - \sum_{i=1}^k (-1)^{i-1}\sum_{\substack{\text{cycle-free } F' \subset \Edg(D) \\ |F'|=k-i}} \binom{|\Ver(D/F')|-1}{i} c_0(D/F',r).\]
    Since $|\Ver(D/F')|=|\Ver(D)|-(k-i)$, we can relabel and combine the sums to obtain
    \[c_k(D,r)=\sum_{i=0}^{k}(-1)^i\sum_{\substack{\text{cycle-free } E' \subset \Edg(D) \\ |E'|=k-i}} \binom{|\Ver(D)|-1-(k-i)}{i}c_0(D/E',r). \]
\end{proof}

\begin{theorem}(\cite{even})
\label{ArbsIndep}
    For any Eulerian digraph $D$, $c_k(D,r)=c_k(D^T,r)$.
\end{theorem}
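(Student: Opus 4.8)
The plan is to derive Theorem~\ref{ArbsIndep} directly from the formula in Theorem~\ref{CountingArbs} together with its $k=0$ instance, which is precisely Corollary~\ref{flip0}. That formula writes $c_k(D,r)$ as the signed sum $\sum_{i=0}^{k}(-1)^i\sum_{E'}\binom{|V(D)|-1-(k-i)}{i}c_0(D/E',r)$ over acyclic edge sets $E'\subset E(D)$ with $|E'|=k-i$, and every ingredient of this expression except the numbers $c_0(D/E',r)$ depends only on $|V(D)|$, $|E'|$ and $k$, all of which are insensitive to taking the transpose. So it suffices to match the contracted-$c_0$ terms of $D$ and of $D^T$ one by one.

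The steps I would carry out are: (i) observe that $E'\mapsto (E')^T$, sending a set of edges of $D$ to the same edges reversed in $D^T$, is a cardinality-preserving bijection between subsets of $E(D)$ and subsets of $E(D^T)$ that preserves acyclicity, since ``acyclic'' here refers to the underlying undirected graph, which $D$ and $D^T$ share; (ii) check the compatibility $(D/E')^T=D^T/(E')^T$ of transpose with contraction, noting that the image of the root $r$ is literally the same vertex on both sides because $V(D)=V(D^T)$ and contraction acts identically on vertex sets; (iii) note that $D/E'$ is again Eulerian --- each elementary contraction replaces two balanced vertices by a single balanced vertex and preserves connectedness --- so Corollary~\ref{flip0} applies to it and gives $c_0(D/E',r)=c_0((D/E')^T,r)=c_0(D^T/(E')^T,r)$. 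Feeding (i)--(iii) into Theorem~\ref{CountingArbs} applied to $D^T$, reindexing the double sum along $E'\leftrightarrow(E')^T$, and using $|V(D^T)|=|V(D)|$ to leave the binomial coefficients untouched, the two signed sums become term-by-term identical, hence $c_k(D,r)=c_k(D^T,r)$.

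I expect the only point requiring genuine care to be the verification in step (ii) that reversing all edges commutes with contracting a fixed edge set, and that the result remains an Eulerian digraph with a well-defined root; everything else is mechanical reindexing. It is worth stressing that this identity is more than a bijection between spanning trees: viewing a $k$-spanning tree $T$ of $D$ rooted at $r$ as a subgraph of $D^T$ turns it into a $(|V(D)|-1-k)$-spanning tree rooted at $r$, because reversing all orientations swaps the correctly and incorrectly oriented tree edges, so the naive bijection only yields $c_k(D,r)=c_{|V(D)|-1-k}(D^T,r)$. Combining that elementary identity with Theorem~\ref{ArbsIndep} is exactly what produces the palindromy $c_k(D,r)=c_{|V(D)|-1-k}(D,r)$ and the root-independence asserted in Theorem~\ref{thm:ourbest}.
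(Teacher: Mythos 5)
Your proposal is correct and follows essentially the same route as the paper: the paper also deduces the theorem from Corollary~\ref{flip0} (the $k=0$ case) together with the expansion of $c_k(D,r)$ in Theorem~\ref{CountingArbs} in terms of $c_0$ of Eulerian contractions, which transpose compatibly with $D\mapsto D^T$. Your steps (i)--(iii) simply spell out the details the paper summarizes as ``follows immediately,'' and your closing remark about $k$-spanning trees of $D$ being $(|V(D)|-1-k)$-spanning trees of $D^T$ matches the paper's derivation of Theorem~\ref{thm:ourbest}.
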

\begin{proof}
    Corrolary \ref{flip0} implies that this holds when $k=0$.  Since Theorem \ref{CountingArbs} expresses $c_k(D,r)$ entirely in terms of $c_0(H,r)$ for Eulerian minors $H$ of $D$, this result follows immediately.
\end{proof}
\begin{ourbest}(\cite[Proposition 1]{even})
    For any Eulerian digraph $D$, $c_k(D,r)$ is independent of the root $r$, and the sequence $c_0(D,r)$, $\ldots$, $ c_{|\Ver(D)|-1}(D,r)$ is palindromic.
\end{ourbest}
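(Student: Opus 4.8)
The plan is to assemble the theorem from the two structural results already established: Theorem~\ref{CountingArbs}, which expresses $c_k(D,r)$ as a signed sum of terms $c_0(D/E',r)$ over acyclic edge sets $E'$, together with Corollary~\ref{flip0} and Theorem~\ref{ArbsIndep}. Almost all of the work has been front-loaded into those statements, so what remains is bookkeeping.

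\textbf{Root-independence.} I would first observe that contracting an acyclic set of edges $E'$ in an Eulerian digraph $D$ yields again an Eulerian digraph $D/E'$: contraction of a connected digraph stays connected, and merging the endpoints of a contracted edge lowers both $\indeg$ and $\odeg$ of the new vertex by the same amount, so the balance condition $\indeg(v)=\odeg(v)$ survives at every vertex of $D/E'$. Hence Theorem~\ref{indepRoot0} applies to each minor $D/E'$ appearing in the formula of Theorem~\ref{CountingArbs}, so each term $c_0(D/E',r)$ is independent of the vertex that $r$ gets contracted into; the remaining ingredients of the formula (the binomial coefficients, and the collection of acyclic $E'$ of a given size) do not reference $r$. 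Therefore $c_k(D,r)$ does not depend on $r$.

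\textbf{Palindromic property.} For this I would run, for general $k$, the same reversal correspondence used in the $k=0$ case of Corollary~\ref{flip0}. An (unoriented) spanning tree $T$ has $|V(D)|-1$ edges; relative to a fixed root $r$, each edge of $T$ has a well-defined ``toward $r$'' orientation, and reversing in $D$ exactly those edges of $T$ that currently point \emph{away} from $r$ produces the oriented spanning tree rooted at $r$, so that number of ``wrong-way'' edges is precisely the index $k$. Passing from $D$ to $D^T$ reverses every edge, interchanging the toward-$r$ and away-from-$r$ roles, so the very same subgraph $T$ is a $k$-spanning tree of $D$ rooted at $r$ if and only if it is a $(|V(D)|-1-k)$-spanning tree of $D^T$ rooted at $r$. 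The identity map $T\mapsto T$ is thus a bijection giving
\[ c_k(D,r) = c_{|V(D)|-1-k}(D^T,r). \]
Combining this with Theorem~\ref{ArbsIndep}, which gives $c_{|V(D)|-1-k}(D^T,r)=c_{|V(D)|-1-k}(D,r)$, we conclude $c_k(D,r)=c_{|V(D)|-1-k}(D,r)$.

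\textbf{Main obstacle.} Given Theorems~\ref{CountingArbs} and~\ref{ArbsIndep}, there is no serious difficulty left; the only two points deserving explicit care are (i) checking that the minors $D/E'$ really are Eulerian, so that the BEST-theorem corollary (Theorem~\ref{indepRoot0}) is legitimately applicable to them, and (ii) verifying that the tree-reversal map in the palindrome step is a genuine bijection and is compatible with the choice of root. Both are routine, but I would spell them out rather than leave them implicit.
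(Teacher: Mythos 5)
Your proof is correct and takes essentially the same route as the paper: root-independence comes from Theorem~\ref{CountingArbs} together with the BEST-theorem corollary applied to the (still Eulerian) contractions $D/E'$, and the palindromic property from the identity $c_k(D,r)=c_{|V(D)|-1-k}(D^T,r)$ combined with Theorem~\ref{ArbsIndep}. You merely spell out the Eulerian-minor check and the tree-reversal correspondence that the paper's one-line proof leaves implicit.
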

\begin{proof}
    Since $k$-spanning trees of $D$ are precisely $(|\Ver(D)|-1-k)$-spanning trees of $D^T$, this follows immediately from Theorems \ref{best} and \ref{ArbsIndep}.
\end{proof}

\section{$P_D(t)$ in terms of volumes of  Root Polytopes of oriented co-Eulerian matroids}
\label{sec:root}

In this section, we prove Theorem \ref{MatroidVersion}.   Before we do so, we review the   work of T\'othm\'er\'esz \cite{T22} on root polytopes of {co-Eulerian matroids}, following parts of her exposition.

\medskip

 A matroid $M$ is said to be \textbf{regular} if it is representable by a \textbf{totally unimodular matrix}, i.e. a matrix whose subdeterminants are all either $1$, $0$, or $-1$. Let $A$ denote such a matrix and $\{a_1, \dots, a_m\}$ its columns. For each circuit $C=\{i_1,\dots,i_j\}$ of a regular matroid with a corresponding linear dependence relation $\sum_{k=1}^j\lambda_k a_{i_k}=0$ of columns of $A$, we may partition its elements into two sets: $C^+=\{i_k \  | \ \lambda_k > 0\}$ and $C^-=\{i_k \ | \ \lambda_k < 0\}$. Observe that scaling the expression $\sum_{k=1}^j\lambda_k a_{i_k}$ by a nonzero constant potentially interchanges $C^+$ and $C^-$, but the partition of $C$ into these two sets is well-defined up to this exchange. In this way, regular matroids can be \textbf{oriented}.

Two regular oriented matroids $M_1$ and $M_2$ on groundset $E$ have \textbf{mutually orthogonal signed circuits} if for each pair of signed circuits $C_1={C_1}^+\sqcup {C_1}^-$ and $C_2={C_2}^+\sqcup {C_2}^-$ of $M_1$ and $M_2$, respectively, either $C_1\cap C_2 = \emptyset$, or $({C_1}^+ \cap {C_2}^+)\cup({C_1}^- \cap {C_2}^-)$ and $({C_1}^+ \cap {C_2}^-)\cup({C_1}^- \cap {C_2}^+)$ are both nonempty. Every regular oriented matroid $M$ admits a unique \textbf{dual oriented matroid} $M^*$ such that $M$ and $M^*$ have mutually orthogonal signed circuits.

\begin{definition}[\cite{T22} Definition 3.4] 
    A regular oriented matroid is \textbf{co-Eulerian} if for each circuit $C$, $|C^+| = |C^-|$. 
\end{definition}

\begin{lemma}[\cite{T22} Claim 3.5]
For an Eulerian digraph $D$, the oriented dual $M^*$ of the graphic matroid of $D$ is a co-Eulerian regular oriented matroid.
\end{lemma}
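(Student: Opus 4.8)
The claim is that for an Eulerian digraph $D$, the oriented dual $M^*$ of the graphic matroid $M$ of $D$ is co-Eulerian, i.e.\ every circuit $C$ of $M^*$ satisfies $|C^+|=|C^-|$. The plan is to use the standard identification of the circuits of $M^*$ with the (signed) minimal edge cuts of $D$, and then translate the Eulerian balance condition $\indeg(v)=\odeg(v)$ at every vertex into the statement that each such cut is crossed equally often in each direction.

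First I would recall that $M^*$, the dual of the graphic matroid of a connected digraph $D$, is the cographic (bond) matroid: its circuits are exactly the minimal edge cuts (bonds) of $D$. Concretely, a bond is determined by a partition $V(D)=S\sqcup\bar S$ with both $D[S]$ and $D[\bar S]$ connected, and the associated circuit of $M^*$ is the set of edges with one endpoint in $S$ and the other in $\bar S$. To see the signs: the dual oriented matroid is characterized (as in the excerpt's definition) by having signed circuits mutually orthogonal to those of $M$. For a graphic matroid the signed circuits are the directed cycles with the induced $\pm$ from traversal; one checks that the orthogonal complement is spanned by the cut vectors, so a signed circuit of $M^*$ coming from the partition $(S,\bar S)$ has $C^+$ equal to the edges pointing from $S$ to $\bar S$ and $C^-$ the edges pointing from $\bar S$ to $S$ (up to the global sign flip, which just swaps the two roles). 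I would state this as a lemma, citing Oxley or a standard oriented-matroid reference, since it is the well-known self-dual description of regular matroids and nothing in this paper needs a from-scratch proof of it.

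Next, the key computation: fix such a partition $(S,\bar S)$ and sum the Eulerian identity over $S$. We have
\[
\sum_{v\in S}\bigl(\odeg(v)-\indeg(v)\bigr)=0.
\]
Every edge with both endpoints in $S$ contributes $+1$ and $-1$ to this sum and cancels; an edge from $S$ to $\bar S$ contributes $+1$ (to $\odeg$) and nothing else; an edge from $\bar S$ to $S$ contributes $-1$; edges inside $\bar S$ contribute nothing. Hence
\[
\#\{e : e \text{ goes } S\to\bar S\}-\#\{e : e \text{ goes } \bar S\to S\}=0,
\]
which says exactly $|C^+|=|C^-|$ for the circuit $C$ of $M^*$ associated to $(S,\bar S)$. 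Since every circuit of $M^*$ arises this way, $M^*$ is co-Eulerian. If one does not want to invoke the bond-matroid description, an equivalent route is to observe that the cocycle space of $D$ (over $\mathbb{Q}$) is spanned by the rows of an incidence-type matrix whose support signs are exactly "out minus in," and that the Eulerian condition is precisely the assertion that the all-ones vector lies in the cycle space, hence is orthogonal to every cocycle — but the direct vertex-sum argument above is cleaner.

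The only genuine subtlety — and the step I expect to be the main obstacle to write carefully — is pinning down the signs, namely that the orientation on circuits of $M^*$ forced by the mutual-orthogonality condition with circuits of $M$ is really the "cut direction" orientation and not something more exotic. Once one knows $M^*$ is the cographic matroid and that its natural orientation as an oriented matroid (coming from $D$ via cocycles) satisfies orthogonality with the directed cycles of $M$, uniqueness of the dual oriented matroid (stated in the excerpt) finishes the identification. After that, the balance argument is a one-line consequence of the Eulerian hypothesis, so the real content is entirely in setting up the sign bookkeeping correctly.
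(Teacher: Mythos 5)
Your argument is correct. Note that the paper does not prove this lemma at all --- it is imported verbatim from T\'othm\'er\'esz \cite{T22} (her Claim 3.5) --- so there is no in-paper proof to diverge from; what you give is the standard argument one would expect: identify the signed circuits of $M^*$ with the bonds of $D$ signed by cut direction (legitimate here since the paper's definition of Eulerian includes connectedness, so every circuit of $M^*$ comes from a partition $V(D)=S\sqcup\bar S$), and then sum $\odeg(v)-\indeg(v)=0$ over $S$ to get $\#\{e: S\to\bar S\}=\#\{e:\bar S\to S\}$, i.e.\ $|C^+|=|C^-|$. Your flagged subtlety about the sign bookkeeping is handled exactly as you say: the cut-direction signing is orthogonal to the traversal signing of directed cycles, and uniqueness of the dual oriented matroid then forces this to be the orientation of $M^*$, so the proof is complete as outlined.
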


With this foundation, T\'othm\'er\'esz introduces the following definition and results.

\begin{definition}[\cite{T22} Definition 3.1]
    \label{def:rootpolytope}
    Let $A$ be a totally unimodular matrix with columns $a_1, \dots, a_m$. The \textbf{root polytope} of $A$ is the convex hull $\mathcal{Q}_A := \emph{conv}(a_1, \dots, a_m)$.
\end{definition}

T\'othm\'er\'esz demonstrates that for any pair of totally unimodular matrices $A$ and $A'$ representing the same regular oriented matroid $M$, there is a lattice point-preseving linear bijection between $t\cdot\mathcal{Q}_A$ and $t\cdot\mathcal{Q}_{A'}$ for any $t\in\mathbb{Z}$.  The definition of a root polytope can thus be extended to regular oriented matroids by considering the root polytope of a totally unimodular matrix representing the matroid.  The set of arborescences of an Eulerian digraph yields a triangulation of the root polytope of  the dual matroid. Recall that an arborescence   of $D$ rooted at $r\in\Ver(D)$ is a directed subgraph $A$ such that for each other vertex $v$ of $D$, there is a unique directed path in $A$ from $r$ to $v$.

\begin{proposition}[\cite{T22} Proposition 3.8]
    For a regular oriented matroid represented by a totally unimodular matrix $A$ and a basis $B=\{i_1,\dots,i_j\}$, the simplex $\Delta_B := \emph{conv}(a_{i_1},\dots,a_{i_j})$ is unimodular. That is, its normalized volume is $1$.
\end{proposition}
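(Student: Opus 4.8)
\emph{Proof proposal.} The plan is to reduce $\Delta_B$, by a unimodular change of coordinates, to the standard simplex, whose unimodularity is elementary.

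First I would pass to a convenient representative of the matroid. Since $B$ is a basis, $A$ has rank $j$, so we may discard rows until exactly $j$ linearly independent rows remain, obtaining a $j \times m$ totally unimodular matrix $A'$ of full row rank $j$ that represents the same regular oriented matroid; moreover the coordinate projection carrying $A$ to $A'$ restricts to a lattice isomorphism on the column spans (this uses total unimodularity: a full-row-rank totally unimodular matrix has a maximal minor equal to $\pm 1$, which lets one write down integral preimages), so it carries $\mathcal{Q}_A$ to $\mathcal{Q}_{A'}$ by a lattice-preserving linear bijection, in particular preserving normalized volume. Hence I may assume $A$ is $j \times m$ of full row rank $j$.

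Now $A_B$, the $j \times j$ submatrix of $A$ with columns $a_{i_1}, \dots, a_{i_j}$, is invertible; being a square submatrix of a totally unimodular matrix it has $\det A_B \in \{1,-1\}$, so $A_B \in \mathrm{GL}_j(\mathbb{Z})$ and $A_B^{-1} \in \mathrm{GL}_j(\mathbb{Z})$ is a lattice automorphism of $\mathbb{Z}^j$. Since $A_B e_k = a_{i_k}$, this automorphism sends $a_{i_k}$ to the $k$-th standard basis vector $e_k$, so $A_B^{-1}(\Delta_B) = \mathrm{conv}(e_1, \dots, e_j)$. As lattice automorphisms preserve normalized volume, it remains only to see that the standard $(j-1)$-simplex $\mathrm{conv}(e_1, \dots, e_j)$ is unimodular. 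This is standard: it lies in $\{x : \sum_i x_i = 1\}$, whose direction space $\{x : \sum_i x_i = 0\}$ carries the lattice $\mathbb{Z}^j \cap \{x : \sum_i x_i = 0\}$, and $e_1 - e_j, \dots, e_{j-1} - e_j$ is a $\mathbb{Z}$-basis of that lattice (for integral $x$ with $\sum_i x_i = 0$ one has $x = \sum_{k=1}^{j-1} x_k (e_k - e_j)$). These vectors are exactly the edges of $\mathrm{conv}(e_1, \dots, e_j)$ at the vertex $e_j$, so the simplex is generated by a lattice basis and hence has normalized volume $1$.

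I do not anticipate a genuine obstacle; the only things requiring care are bookkeeping — verifying that the reduction to a full-row-rank representative is lattice-preserving (where total unimodularity enters), and being precise that the normalized volume of the possibly lower-dimensional simplex $\Delta_B$ is measured against the lattice induced on its own affine hull, which the unimodular map $A_B^{-1}$ transports correctly.
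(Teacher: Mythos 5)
This proposition is imported by the paper from T\'othm\'er\'esz \cite{T22} and is not proved here, so there is no in-paper argument to compare against; judged on its own, your proof is correct and is the standard route one expects: total unimodularity gives $\det A_B=\pm1$, so $A_B^{-1}\in \mathrm{GL}_j(\mathbb{Z})$ carries $\Delta_B$ to the standard simplex, and you correctly handle the two delicate points -- that passing to a full-row-rank representative is a lattice isomorphism on the column span (via a $\pm1$ maximal minor giving integral preimages), and that unimodularity of the $(j-1)$-dimensional simplex must be checked against the lattice induced on its affine hull, which you do by exhibiting $e_1-e_j,\dots,e_{j-1}-e_j$ as a basis of $\{x\in\mathbb{Z}^j:\sum_i x_i=0\}$. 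No gaps; the remaining steps are the routine bookkeeping you identify.
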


\begin{theorem}[\cite{T22} Theorem 4.1]
Let $D$ be an Eulerian digraph, and let $A$ be any totally unimodular matrix representing the oriented dual of the oriented graphic matroid of $D$. Let $r\in \Ver(D)$ and $\mathcal{D} = \{B\subset \Edg(D) \ | \ \Edg(D) - B \ \text{is an arborescence of} \ D \ \text{rooted at} \ r\}$. Then $\{\Delta_B \ | \ B\in\mathcal{D}\}$ is a triangulation of $\mathcal{Q}_A$.
\end{theorem}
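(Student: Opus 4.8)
The plan is to realize $\{\Delta_B : B\in\mathcal{D}\}$ as a regular (coherent) triangulation of $\mathcal{Q}_A$, induced by a carefully chosen height function on the columns of $A$. Fix any totally unimodular representative $A$ of $M^*$ with columns $a_e$, $e\in E(D)$; the statement does not depend on this choice, since two such representatives give linearly equivalent root polytopes. Recall that bases of $M^*$ are exactly the complements $E(D)\setminus T$ of spanning trees $T$ of $D$, and set $m=|E(D)|-|V(D)|+1$, the rank of $M^*$. The first point to record is that $\mathcal{Q}_A$ is $(m-1)$-dimensional, so that each $\Delta_B$ with $B$ a basis is a genuine full-dimensional simplex in $\mathcal{Q}_A$: a linear dependence among the columns of $A$ is exactly a vector in the cut space of $D$, and since $D$ is Eulerian every cut vector has coordinate sum $0$ (summing $\indeg(v)-\odeg(v)=0$ over $v$), so the affine hull of $\{a_e\}$ has dimension $(|E(D)|-1)-\dim\ker A=m-1$. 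This is the first place the Eulerian hypothesis enters, and it is essential: without it, bases of $M^*$ would be too small to index full-dimensional cells.

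Next I build the height function. For $h=(h_e)_{e\in E(D)}\in\mathbb{R}^{E(D)}$ and $v\in V(D)$, let $\rho(v):=\sum_{e:\fin(e)=v}h_e-\sum_{e:\init(e)=v}h_e$ be the net $h$-weight flowing into $v$; then $\sum_v\rho(v)=0$, and since $D$ is connected the vector $(\rho(v))_v$ can be prescribed to be any vector of coordinate sum $0$. Hence I may choose $h$ with $\rho(v)>0$ for every $v\neq r$, and, as this is an open condition, I may further take $h$ generic — in particular avoiding the finitely many hyperplanes $\{h:h_e=\sum_{e'\in B}\alpha_{e',e}h_{e'}\}$, one for each basis $B$ of $M^*$ and each $e\notin B$, where $a_e=\sum_{e'\in B}\alpha_{e',e}a_{e'}$ is the (unique) expansion of $a_e$ in the basis $B$. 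Lift $a_e$ to $\hat a_e:=(a_e,h_e)$; the lower facets of $\mathrm{conv}\{\hat a_e:e\in E(D)\}$ project to a regular subdivision of $\mathcal{Q}_A$, and I claim that for this $h$ the maximal cells are exactly the $\Delta_B$ with $B\in\mathcal{D}$.

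To verify the claim, fix a basis $B=E(D)\setminus T$. Since $\{a_{e'}:e'\in B\}$ is an affine basis of the affine hull of $\mathcal{Q}_A$, there is a unique affine functional $\ell_B$ with $\ell_B(a_{e'})=h_{e'}$ for $e'\in B$, and $\Delta_B$ is a lower facet precisely when $\ell_B(a_e)<h_e$ for every $e\in T$. For such an $e$, the coefficient vector of the dependence $a_e-\sum_{e'\in B}\alpha_{e',e}a_{e'}=0$ lies in the cut space, hence sums to $0$, which forces $\sum_{e'}\alpha_{e',e}=1$ (the Eulerian property again), so $\ell_B(a_e)=\sum_{e'\in B}\alpha_{e',e}h_{e'}$; moreover this dependence is supported on the fundamental circuit of $e$ in $M^*$, i.e. on the bond $\delta(W_e)$ where $W_e$ is the component of $T\setminus e$ not containing $r$, and $\delta(W_e)\cap T=\{e\}$. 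Reading the signs of $\alpha_{\bullet,e}$ off the cut vector of $\delta(W_e)$ (normalized to have $e$-coordinate $1$), a direct computation shows, using $\sum_{e'\in\delta^-(W_e)}h_{e'}-\sum_{e'\in\delta^+(W_e)}h_{e'}=\sum_{v\in W_e}\rho(v)$, that $\ell_B(a_e)<h_e$ is equivalent to $\sum_{v\in W_e}\rho(v)>0$ when $e$ is directed into $W_e$, and to $\sum_{v\in W_e}\rho(v)<0$ when $e$ is directed out of $W_e$. Since $r\notin W_e$ we have $\sum_{v\in W_e}\rho(v)>0$, and since $T$ is an $r$-arborescence exactly when every $e\in T$ is directed into its $W_e$ (i.e. from the $r$-side towards the other side), it follows that $\Delta_{E(D)\setminus T}$ is a lower facet iff $T$ is an $r$-arborescence. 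Finally, by genericity every lower facet has vertex set exactly a basis (a larger vertex set would place some $\hat a_e$ on an excluded hyperplane), so the maximal cells of the subdivision are precisely $\{\Delta_B:B\in\mathcal{D}\}$, which is therefore a (regular) triangulation of $\mathcal{Q}_A$.

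The step I expect to be most delicate is the sign bookkeeping in the last paragraph: keeping the orientation conventions for $\delta^{\pm}(W_e)$, the normalization of the fundamental cut vector, and the affine-versus-linear expansion of $a_e$ all mutually consistent, since this is exactly where the Eulerian hypothesis does its work and a single sign slip would invert the arborescence condition. I note as a byproduct that the argument reproves Theorem~\ref{best}: the normalized volume of $\mathcal{Q}_A$ equals the number of lower facets, hence the number of $r$-arborescences, and $\mathcal{Q}_A$ does not depend on $r$.
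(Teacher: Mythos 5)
The paper does not prove this statement: it is quoted verbatim from T\'othm\'er\'esz \cite{T22} (her Theorem 4.1) and used as a black box, so there is no internal proof to compare against. Judged on its own, your argument is correct and complete. The two facts you extract from the Eulerian hypothesis are exactly the right ones: every vector of $\ker A$ (the cut space of $D$) has coordinate sum $0$, which (i) makes the affine hull of the columns $(m-1)$-dimensional so that bases of $M^*$ index full-dimensional simplices, and (ii) turns the fundamental-circuit expansion $a_e=\sum_{e'\in B}\alpha_{e',e}a_{e'}$ into an affine combination, so that $h_e-\ell_B(a_e)$ can be evaluated by pairing the signed bond vector of $\delta(W_e)$ against $h$. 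I checked the sign computation you flagged as delicate: with $\rho(v)=\sum_{\fin(e)=v}h_e-\sum_{\init(e)=v}h_e$ one gets $h_e-\ell_B(a_e)=\pm\sum_{v\in W_e}\rho(v)$ with the $+$ sign exactly when $e$ is directed into $W_e$, so choosing $\rho>0$ off $r$ selects precisely the $r$-arborescences; note that this choice already forces all the quantities $\sum_{v\in W_e}\rho(v)$ to be nonzero, so the separate genericity perturbation is not even needed for identifying the cells (only for ruling out non-simplicial cells, where your affine-independence-implies-linear-independence observation closes the gap). Your route — exhibiting the arborescence triangulation as a \emph{regular} (coherent) triangulation via an explicit height function — is different in flavor from T\'othm\'er\'esz's more directly combinatorial verification that the simplices dissect without overlap; what your approach buys is the extra conclusion that the triangulation is coherent, plus, as you observe, an immediate geometric re-derivation of root-independence of the arborescence count (Theorem \ref{best}) from the fact that $\mathcal{Q}_A$ does not see $r$.
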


\begin{corollary}[\cite{T22}]
\label{VolMatroidCase}
    Let $D$ be an Eulerian digraph, and let $A$ be any totally unimodular matrix representing the oriented dual of the oriented graphic matroid of $D$. Let $r\in \Ver(D)$.  Then
    $$\emph{Vol} (\mathcal{Q}_{A}) = c_0(D,r).$$
\end{corollary}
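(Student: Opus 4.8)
\textbf{Proof plan for Corollary \ref{VolMatroidCase}.}

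The plan is to combine the triangulation result (\cite{T22} Theorem 4.1) with the normalized-volume count, and then invoke Theorem \ref{best} to remove the dependence on the root. First I would recall that for a $d$-dimensional lattice polytope, the \emph{normalized} volume $\mathrm{Vol}$ used here is $d!$ times the Euclidean volume, so that a unimodular simplex has $\mathrm{Vol} = 1$; with this convention, the normalized volume of a polytope equals the number of maximal simplices in \emph{any} unimodular triangulation of it. By \cite{T22} Theorem 4.1, the collection $\{\Delta_B \mid B \in \mathcal{D}\}$, where $\mathcal{D} = \{B \subset E(D) \mid E(D) - B \text{ is an arborescence of } D \text{ rooted at } r\}$, is a triangulation of $\mathcal{Q}_A$, and by \cite{T22} Proposition 3.8 each $\Delta_B$ is unimodular (each $B \in \mathcal{D}$ is a basis of $M^*$, being the complement of a spanning tree of $D$). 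Hence $\mathrm{Vol}(\mathcal{Q}_A) = |\mathcal{D}|$.

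Next I would identify $|\mathcal{D}|$ with a spanning-tree count. By definition, $B \in \mathcal{D}$ exactly when $E(D) - B$ is an arborescence rooted at $r$, i.e. an $(|V(D)|-1)$-spanning tree; since $B \mapsto E(D) - B$ is a bijection between $\mathcal{D}$ and the set of arborescences of $D$ rooted at $r$, we get $|\mathcal{D}| = c_{|V(D)|-1}(D,r)$. By Corollary \ref{flip0} (or equivalently the last sentence of Theorem \ref{thm:ourbest}), $c_{|V(D)|-1}(D,r) = c_0(D,r)$. Stringing the equalities together gives $\mathrm{Vol}(\mathcal{Q}_A) = c_0(D,r)$, as claimed. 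One should also note, for consistency, that the left-hand side does not depend on the choice of totally unimodular representative $A$ (by the lattice-preserving linear bijection between root polytopes of matrices representing the same oriented matroid, recalled just before \cite{T22} Proposition 3.8), and the right-hand side does not depend on $r$ (by Theorem \ref{best}), so the statement is well-posed on both sides.

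The only real subtlety — and the step I would be most careful about — is bookkeeping the two dualities simultaneously: T\'othm\'er\'esz's triangulation is a triangulation of the root polytope of $M^*$ whose simplices are indexed by \emph{complements} of arborescences, so one must track whether the relevant count is $c_0$ or $c_{|V(D)|-1}$ before applying the root-independence/reversal symmetry. Because $D$ is Eulerian, $c_0(D,r) = c_{|V(D)|-1}(D,r)$ by Corollary \ref{flip0}, so the two readings coincide and the stated form $\mathrm{Vol}(\mathcal{Q}_A) = c_0(D,r)$ is correct; but without Eulerianness this identification would fail, which is exactly why the hypothesis is needed. Everything else is a direct invocation of the cited results, so no lengthy computation is required.
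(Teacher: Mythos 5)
Your argument is correct and matches the route the paper intends: Corollary \ref{VolMatroidCase} is presented as an immediate consequence of T\'othm\'er\'esz's triangulation theorem together with the unimodularity of the simplices $\Delta_B$, giving $\mathrm{Vol}(\mathcal{Q}_A)$ as the number of arborescences rooted at $r$, which equals $c_0(D,r)$ by Corollary \ref{flip0}. Your additional remarks on well-posedness (independence of the representing matrix $A$ and of the root $r$) and on the $c_{|V(D)|-1}$ versus $c_0$ bookkeeping are accurate and consistent with the paper.
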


As a consequence, we may reformulate our Theorem \ref{CountingArbs} in terms of volumes of root polytopes.

\begin{corollary}
    \label{CountingArbsVol}
    For an Eulerian digraph $D$, let $H_D$ denote a totally unimodular matrix representing the oriented dual of the graphic matroid of $D$.
    $$c_k(D,r)=\sum_{i=0}^{k}(-1)^i\sum_{\substack{\text{cycle-free } E' \subset \Edg(D) \\ |E'|=k-i}} \binom{|\Ver(D)|-1-(k-i)}{i}\emph{Vol} (\mathcal{Q}_{H_{D/E'}}).$$
\end{corollary}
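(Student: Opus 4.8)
The statement to prove, Corollary~\ref{CountingArbsVol}, is an immediate translation of Theorem~\ref{CountingArbs} into the language of root polytope volumes, and the plan is simply to combine the two facts we already have in hand.

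First I would recall the exact content of Theorem~\ref{CountingArbs}: for an Eulerian digraph $D$ and a fixed root $r \in V(D)$,
\[
c_k(D,r)=\sum_{i=0}^{k}(-1)^i\sum_{\substack{\text{acyclic } E' \subset E(D) \\ |E'|=k-i}} \binom{|V(D)|-1-(k-i)}{i}\,c_0(D/E',r).
\]
The only term on the right-hand side that refers to a digraph is $c_0(D/E',r)$, the number of oriented spanning trees (equivalently, $0$-spanning trees) of the contracted digraph $D/E'$. The plan is to replace each such term by a root polytope volume using Corollary~\ref{VolMatroidCase}.

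The key step is to check that Corollary~\ref{VolMatroidCase} applies to each minor $D/E'$ appearing in the sum. Contracting an acyclic edge set $E'$ preserves the Eulerian property: $D/E'$ is still connected, and at every (merged) vertex the in-degree still equals the out-degree, since contraction merely identifies vertices and deletes loops in matched in/out pairs. Hence $D/E'$ is Eulerian, and $H_{D/E'}$ — a totally unimodular matrix representing the oriented dual of the graphic matroid of $D/E'$ — is well defined. Corollary~\ref{VolMatroidCase} then gives $\mathrm{Vol}(\mathcal{Q}_{H_{D/E'}}) = c_0(D/E',r)$ for any choice of root $r \in V(D/E')$; in particular this holds for the image of $r$ under the contraction. (One should note in passing that by Theorem~\ref{best}/Theorem~\ref{thm:ourbest} the value $c_0(D/E',r)$ does not depend on which root we pick, so there is no ambiguity in the right-hand side.) Substituting $c_0(D/E',r) = \mathrm{Vol}(\mathcal{Q}_{H_{D/E'}})$ into the displayed formula for $c_k(D,r)$ yields exactly the claimed identity.

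There is no real obstacle here — the corollary is a formal consequence of two previously established results — so the only thing to be careful about is the bookkeeping: making sure the minors $D/E'$ to which Corollary~\ref{VolMatroidCase} is applied are genuinely Eulerian (handled above), and making sure the binomial coefficients and index ranges are transcribed faithfully from Theorem~\ref{CountingArbs}. I would therefore write the proof in one or two sentences, citing Theorem~\ref{CountingArbs} and Corollary~\ref{VolMatroidCase} and noting the Eulerian-minor observation.
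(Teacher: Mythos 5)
Your proposal is correct and matches the paper's (implicit) argument exactly: the paper states Corollary~\ref{CountingArbsVol} as an immediate consequence of Theorem~\ref{CountingArbs} together with Corollary~\ref{VolMatroidCase}, substituting $\mathrm{Vol}(\mathcal{Q}_{H_{D/E'}})$ for $c_0(D/E',r)$. Your extra check that $D/E'$ remains Eulerian (so that Corollary~\ref{VolMatroidCase} applies) is a sensible and correct piece of bookkeeping that the paper leaves unstated.
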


We are now ready to prove:

\begin{recalltheorem}
    Let $D$ be an Eulerian digraph, and let $M$ be the oriented graphic matroid associated to $D$.  Let $A$ be a totally unimodular matrix representing $M^*$, the oriented dual of $M$, and let $m$ be the size of a basis of $M^*$.  Then 
\begin{equation}\label{eq:tutte}P_D(t)=\sum_{k=0}^{\infty} c_k(D,r)t^k = \sum_{H \text{ has property *}} Vol(Q_{H})(t-1)^{\#col(H)-m} \end{equation}
where a matrix $H$ has property * if it is obtained by deleting a set of columns from $A$ without decreasing the rank of the matrix.
\end{recalltheorem}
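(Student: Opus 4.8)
The plan is to rewrite the right-hand side of \eqref{eq:tutte} so that it is manifestly a polynomial identity in $t$, expand the powers $(t-1)^{\#col(H)-m}$ via the binomial theorem, and match coefficients of $t^k$ against the formula for $c_k(D,r)$ supplied by Theorem~\ref{CountingArbs} (equivalently, its root-polytope reformulation, Corollary~\ref{CountingArbsVol}). The first observation is the translation of the indexing set: a matrix $H$ with property~* is obtained by deleting columns of $A$ without dropping the rank; since $A$ represents $M^*$, the oriented dual of the graphic matroid of $D$, deleting columns of $A$ corresponds to deleting the dual elements, which is \emph{contracting} edges of $D$. Rank is preserved exactly when the contracted edge set is acyclic (contracting a cycle would create a loop in the dual and drop the rank). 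So property-* matrices are in bijection with acyclic edge subsets $E' \subset E(D)$, and for such an $E'$ the corresponding matrix is $H_{D/E'}$ in the notation of Corollary~\ref{CountingArbsVol}, with $\#col(H_{D/E'}) - m = |E(D)| - |E'| - m = |E'|$ fewer columns deleted than\,\dots more precisely $\#col(H) = |E(D)| - |E'|$ and $m = |E(D)| - |V(D)| + 1$, so the exponent $\#col(H)-m$ equals $|V(D)|-1-|E'|$. Also $\mathrm{Vol}(\mathcal{Q}_H) = c_0(D/E', r)$ by Corollary~\ref{VolMatroidCase}.

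With this dictionary the right-hand side of \eqref{eq:tutte} becomes
\[
\sum_{\substack{E' \subset E(D) \\ E' \text{ acyclic}}} c_0(D/E', r)\,(t-1)^{|V(D)|-1-|E'|}.
\]
Next I would expand $(t-1)^{|V(D)|-1-|E'|} = \sum_{j} \binom{|V(D)|-1-|E'|}{j}(-1)^{|V(D)|-1-|E'|-j} t^{j}$; but it is cleaner to write $(t-1)^N = \sum_{i\ge 0}(-1)^i \binom{N}{i} t^{N-i}$ and then set $k = N - i$ as the exponent of $t$, i.e. extract the coefficient of $t^k$. Collecting terms, the coefficient of $t^k$ in the displayed sum is
\[
\sum_{\substack{E' \subset E(D) \\ E' \text{ acyclic}}} (-1)^{|V(D)|-1-|E'|-k}\binom{|V(D)|-1-|E'|}{\,|V(D)|-1-|E'|-k\,} c_0(D/E',r),
\]
and after reindexing with $i := |V(D)|-1-|E'|-k$ and $|E'| = k - i + (\,|V(D)|-1-|E'| - k\,)$—concretely, grouping the sum first over the value $k-i := $ (something)—one recovers exactly
\[
c_k(D,r) = \sum_{i=0}^{k}(-1)^i \sum_{\substack{E' \text{ acyclic},\ |E'| = k-i}} \binom{|V(D)|-1-(k-i)}{i} c_0(D/E',r),
\]
which is the statement of Theorem~\ref{CountingArbs}. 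Since the two generating functions have the same coefficient of $t^k$ for every $k$, they are equal, proving \eqref{eq:tutte}. One should also remark that the left sum in \eqref{eq:tutte} is finite ($c_k = 0$ for $k > |V(D)|-1$) so there are no convergence subtleties, and that independence of the root was already established in Theorem~\ref{thm:ourbest}, so the right-hand side, which visibly does not mention $r$, is consistent.

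The main obstacle I anticipate is purely bookkeeping: matching the binomial coefficient $\binom{|V(D)|-1-(k-i)}{i}$ produced by Theorem~\ref{CountingArbs} against the one that falls out of the binomial expansion of $(t-1)^{|V(D)|-1-|E'|}$. The identity $\binom{N}{i} = \binom{N}{N-i}$ and the substitution $|E'| = k-i$ reconcile them, but one must be careful that the summation range of $i$ is correct (it is cut off at $k$ because $|E'| \ge 0$) and that acyclicity of $E'$ is exactly the rank-preservation condition — this last point is where a short lemma about the dual matroid (deletion in $M^*$ $\leftrightarrow$ contraction in $M$, and the dual of a graphic matroid) is invoked, and I would state it explicitly before the computation so that the "property~*" condition is unambiguously translated into "$E'$ acyclic in $D$."
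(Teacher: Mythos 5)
Your dictionary is exactly the paper's: property-$*$ matrices correspond to acyclic $E'\subset \Edg(D)$ (deletion in $M^*$ is contraction in $M$, and rank preservation means the deleted columns miss a basis of $M^*$, i.e.\ form an independent set of $M$), the exponent is $\#col(H)-m=|\Ver(D)|-1-|E'|$, and $Vol(Q_H)=c_0(D/E',r)$ by Corollary \ref{VolMatroidCase}. The plan of expanding $(t-1)^{|\Ver(D)|-1-|E'|}$ and comparing against Theorem \ref{CountingArbs} is also the paper's. But the final coefficient-matching step, as you wrote it, does not go through, and the vagueness in your reindexing (``$|E'| = k-i+(\cdots)$'', ``grouping the sum first over the value $k-i:=$ (something)'') hides a real issue, not mere bookkeeping. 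Extracting the coefficient of $t^k$ from $\sum_{E'}c_0(D/E',r)(t-1)^{|\Ver(D)|-1-|E'|}$ and setting $i:=|\Ver(D)|-1-|E'|-k$ gives
\[
\sum_{i\ge 0}(-1)^i\sum_{\substack{E'\text{ acyclic}\\ |E'|=|\Ver(D)|-1-k-i}}\binom{k+i}{i}\,c_0(D/E',r),
\]
in which $E'$ has size $|\Ver(D)|-1-k-i$, not $k-i$, and the binomial is $\binom{k+i}{i}$, not $\binom{|\Ver(D)|-1-(k-i)}{i}$. By Theorem \ref{CountingArbs} this sum equals $c_{|\Ver(D)|-1-k}(D,r)$, not $c_k(D,r)$, and the two linear combinations of the quantities $c_0(D/E',r)$ are genuinely different (for instance with $|\Ver(D)|=4$, $k=1$, $|E'|=1$ the weights are $-2$ versus $+1$), so no rearrangement or binomial identity alone can reconcile them.

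The missing ingredient is the palindromicity $c_k(D,r)=c_{|\Ver(D)|-1-k}(D,r)$ of Theorem \ref{thm:ourbest}, and this is precisely how the paper closes the step: after the binomial expansion it replaces $c_k$ by $c_{|\Ver(D)|-1-k}$ before matching against Corollary \ref{CountingArbsVol}. Your only appeal to Theorem \ref{thm:ourbest} is for independence of the root, not for the symmetry, so as written the argument stops one step short. The fix is immediate: conclude that the coefficient of $t^k$ on the right-hand side equals $c_{|\Ver(D)|-1-k}(D,r)$ by Theorem \ref{CountingArbs}, which equals $c_k(D,r)$ by Theorem \ref{thm:ourbest}; with that sentence inserted, your proof is complete and coincides with the paper's.
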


\proof  Note that if $k>|\Ver(D)|-1$, then $c_k(D,r)=0$ since a spanning tree in $D$  has  $|\Ver(D)|-1$ edges.  Since $\#col(A)-m=|\Ver(D)|-1$ (both sides compute the size of a basis of $M$), the right-hand side also has no terms of degree greater than $|\Ver(D)|-1$.  \\
Using a binomial expansion, the desired result is equivalent to 
\[c_k(D,r)=\sum_{i=0}^{\#col(A)-k-m} \sum_{\substack{H \text{ with property *} \\ \#col(H)=k+m+i }} (-1)^i \binom{k+i}{i} Vol(Q_{H}) \]
for all $k \leq |\Ver(D)| -1$.  Since $c_k(D,r)=c_{|\Ver(D)|-1-k}(D,r)$, this is also equivalent to
\[\begin{split}
    c_k(D,r) & = \sum_{i=0}^{\#col(A)-|\Ver(D)|+k-m+1} \sum_{\substack{H \text{ with property *} \\ \#col(H)=|\Ver(D)|-1-k+m+i }} (-1)^i \binom{|\Ver(D)|-1-k+i}{i} Vol(Q_{H}) \\
    &= \sum_{i=0}^k \sum_{\substack{H \text{ with property *} \\ \#col(H)=\#col(A)-k+i }} (-1)^i \binom{|\Ver(D)|-1-k+i}{i} Vol(Q_{H})
\end{split}\]

Recall that by standard results in matroid theory, deleting elements of $M^*$ (or columns of $A$) is equivalent to contracting the corresponding elements in $M$.  Furthermore, the set of columns we remove from $A$ to form $H$ are disjoint to a basis of $M^*$ (since removing them did not decrease the rank of the matrix), so these columns form an independent set of $M$.  Since the independent sets in the graphic matroid of $D$ are precisely cycle-free subsets of edges of $D$, the desired result is equivalent to
\[c_k(D,r)=\sum_{i=0}^{k}(-1)^i\sum_{\substack{\text{cycle-free } E' \subset \Edg(D) \\ |E'|=k-i}} \binom{|\Ver(D)|-1-(k-i)}{i}\emph{Vol} (\mathcal{Q}_{H_{D/E'}}) \]
which holds by Corollary \ref{CountingArbsVol}.
\qed
\medskip

\begin{recallcorollary}   The Alexander polynomial $\Delta_{L_{G}}(-t)$ of the special alternating link associated to the planar bipartite graph $G$ can be expressed as:
\begin{equation}\label{apost} \Delta_{L_{G}}(-t) \sim \sum_{\substack{H \subset G \\ H \text{ connected}}} Vol(Q_{H})(t-1)^{|\Edg(H)|-|\Ver(G)|+1}.\end{equation}
\end{recallcorollary}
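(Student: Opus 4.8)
The plan is to derive Corollary~\ref{cor:alexpoly} as a direct specialization of Theorem~\ref{MatroidVersion} to the case where $D$ is an alternating dimap, using Theorem~\ref{thm2} to translate the left-hand side into the language of Alexander polynomials. First I would recall that if $G$ is a planar bipartite graph with dual alternating dimap $D$, then Theorem~\ref{thm2} gives $P_D(t) = \Delta_{L_G}(-t)$, so the left-hand side of \eqref{eq} already equals the Alexander polynomial we want. It remains to rewrite the right-hand side of \eqref{eq} in the stated form. The key identification is that the graphic matroid $M$ of $D$ is dual to the graphic matroid of the planar dual $G$; equivalently, $M^* = M(G)$ is itself graphic, represented by the (signed) incidence matrix $A$ of $G$. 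Under this identification, a subset $H$ of columns of $A$ obtained by deleting columns without dropping the rank corresponds exactly to a spanning connected subgraph $H \subset G$ (deleting columns of the incidence matrix means deleting edges of $G$, and not decreasing the rank means keeping $G$ connected on all of $V(G)$).

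Next I would match up the exponents and the volume terms. In Theorem~\ref{MatroidVersion}, $m$ is the size of a basis of $M^* = M(G)$, so $m = |V(G)| - 1$, and $\#\mathrm{col}(H) = |E(H)|$ since columns of $A$ are indexed by edges of $G$. Hence the exponent $\#\mathrm{col}(H) - m$ becomes $|E(H)| - |V(G)| + 1$, which is precisely the first Betti number (cycle rank) of the connected subgraph $H$, matching \eqref{apost}. The volume term $\mathrm{Vol}(Q_H)$ is unchanged under this dictionary: $Q_H$ is the root polytope of the submatrix $H$ of the incidence matrix $A$ of $G$, which is exactly the root polytope of the bipartite subgraph $H \subset G$ in the sense of Postnikov \cite{P05} and K\'alm\'an--Murakami \cite{murakami}. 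I should note that the variable name is printed as $q$ in \eqref{apost} but plays the role of $t$; I would either silently treat $q = t$ or flag this as a typo and use $t$ throughout.

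So the steps, in order, are: (1) invoke Theorem~\ref{thm2} to write $\Delta_{L_G}(-t) = P_D(t)$; (2) apply Theorem~\ref{MatroidVersion} to the Eulerian digraph $D$; (3) observe that $M^*$, the oriented dual of the graphic matroid of $D$, is the oriented graphic matroid of the planar dual $G$, and take $A$ to be the incidence matrix of $G$; (4) translate ``$H$ has property *'' into ``$H \subset G$ is a connected spanning subgraph,'' and translate $\#\mathrm{col}(H) - m$ into $|E(H)| - |V(G)| + 1$; (5) read off \eqref{apost}. The main obstacle, if any, is making the matroid-duality dictionary precise enough: one must be careful that planar duality of graphs really does correspond to matroid duality of the associated (oriented, regular) graphic matroids, and that T\'othm\'er\'esz's root polytope $Q_A$ for a totally unimodular $A$ agrees, for the incidence matrix of a bipartite graph, with the classical root polytope of that bipartite graph. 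Both facts are standard, so the corollary is genuinely a short deduction; the only real content is bookkeeping of which matroid is which and confirming the exponent is the cycle rank.
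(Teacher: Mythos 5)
Your proposal is correct and follows essentially the same route as the paper: the paper likewise obtains the corollary by specializing Theorem~\ref{MatroidVersion} to an alternating dimap $D$ (so that $M^*$ is the graphic matroid of the planar dual $G$, property~* becomes ``connected spanning subgraph of $G$,'' and $\#\mathrm{col}(H)-m$ becomes $|E(H)|-|V(G)|+1$) and then invoking Theorem~\ref{thm2} to replace $P_D(t)$ by $\Delta_{L_G}(-t)$. Your write-up simply makes explicit the matroid-duality bookkeeping that the paper leaves implicit, and your remark that the $q$ in the displayed formula should be read as $t$ is also consistent with the paper's usage.
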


\proof
Note that for any alternating dimap $D$ whose planar dual is the bipartite graph $G$, Theorem \ref{MatroidVersion} specializes to: 
\begin{equation}\label{post} P_D(t) = \sum_{\substack{H \subset G \\ H \text{ connected}}} Vol(Q_{H})(t-1)^{|\Edg(H)|-|\Ver(G)|+1}.\end{equation}

Then Corollary \ref{cor:alexpoly} follows readily from 
 Theorem \ref{thm2} and equation \eqref{post}.
 \qed

\medskip
As explained in Section \ref{sub:root}, Theorem \ref{MatroidVersion} was inspired by the work of Li and Postnikov \cite{slicing}, and Corollary \ref{cor:alexpoly} can also be deduced from results of Li and Postnikov \cite{slicing} and K\'alm\'an, M\'esz\'aros, Postnikov \cite{trap}.

\section{$P_D(t)$ as the Alexander polynomial of special alternating links}
\label{sec:thm2}

As mentioned in the introduction, the polynomial $P_D(t)$ is the Alexander polynomial of a special alternating link when $D$ is an alternating dimap. This was established by Murasugi and Stoimenow \cite{even}. We give a proof here for completeness. Our proof is closely related to the one they provided. We will use parts of our proof of Theorem \ref{thm2} to establish our weight-preserving bijection between the Crowell and Kauffman state models in Section \ref{sec:bij}.

\begin{thm2}\cite[Theorem 2]{even}  For an alternating dimap $D$, the Alexander polynomial of $D$,  $P_D(t)$, equals the Alexander polynomial $\Delta_{L_{G}}(-t)$ for the special alternating link  $L_{G}$ associated to the planar dual $G$ of $D$. In other words:
$$\Delta_{L_{G}}(-t) \sim \sum_{k=0}^{|\Ver(D)|-1}c_k(D,r)t^{|\Ver(D)|-1-k}.$$
\end{thm2}

\medskip

Theorem \ref{thm2} is stated in \cite{even} as $\Delta_{L_G}(-t)\sim P_D(t)$. By Theorem \ref{thm:ourbest}, $P_D(t)$ is palindromic; therefore,  $\Delta_{L_G}(-t)\sim P_D(t)$ is equivalent to $\Delta_{L_{G}}(-t) \sim \sum_{k=0}^{|\Ver(D)|-1}c_k(D,r)t^{|\Ver(D)|-1-k}$, the expression in Theorem \ref{thm2}.

\medskip

Before proving Theorem \ref{thm2}, we present Kauffman's state model.

\subsection{Kauffman's state model} 
The State Summation,  introduced by Kauffman \cite{K06}, describes the Alexander polynomial as a sum over weights of decorations of a link diagram, called states \cite{K06}. We begin with a (not necessarily alternating) link diagram $L$ and marks two (arbitrary) adjacent regions.  At every crossing, we assign a weight to each nonmarked region that meets it, according to the following convention:

 \bigskip
 
\begin{center} \begin{tikzpicture}
\draw[black, thick] (1,0) -- (1,.9);
\draw[-stealth, black, thick] (1,1.1) -- (1,2);
\draw[black, thick] (0,1) -- (2,1);
\draw[black, thick] (5,0) -- (5,.9);
\draw[-stealth, black, thick] (5,1.1) -- (5,2);
\draw[black, thick] (4,1) -- (6,1);

\node at (3,1.2) {is labeled};
\node at (4.75,1.5) {$t$};
\node at (4.65,.5) {$-t$};
\node at (5.35,1.5) {$-1$};
\node at (5.3,.5) {$1$};
\end{tikzpicture} \end{center}

A \textbf{state} is a bijection between crossings in the diagram and the nonmarked regions so that each crossing is mapped to one of the four regions it meets. This bijection is presented diagrammatically, as in Figure \ref{fig:state}, by drawing a \textbf{state marker} at each crossing to show which region it is mapped to. For each state $S$, there is a signed monomial $\langle L|S\rangle$, computed as the product of the weights corresponding to the region with the state marker at each crossing. 

Kauffman defines a \textbf{black hole} to be any state marker of the form below.  

\begin{center} \begin{tikzpicture}
\draw[black, thick] (1,0) -- (1,.9);
\draw[-stealth, black, thick] (1,1.1) -- (1,2);
\draw[-stealth, black, thick] (0,1) -- (2,1);
\draw[black, thick] (4,1) -- (4.9,1);
\draw[-stealth, black, thick] (5.1,1) -- (6,1);
\draw[-stealth, black, thick] (5,0) -- (5,2);

\node at (3,1.2) {or};
\draw[-{Latex[round]}, black, thick] (.7,.7)--(.8,.8);
\draw[-{Latex[round]}, black, thick] (4.7,.7)--(4.8,.8);
\end{tikzpicture} \end{center} 

Let $b(S)$ be the number of black holes in a fixed state $S$.

\begin{figure}
\centering
\includegraphics[width=11cm]{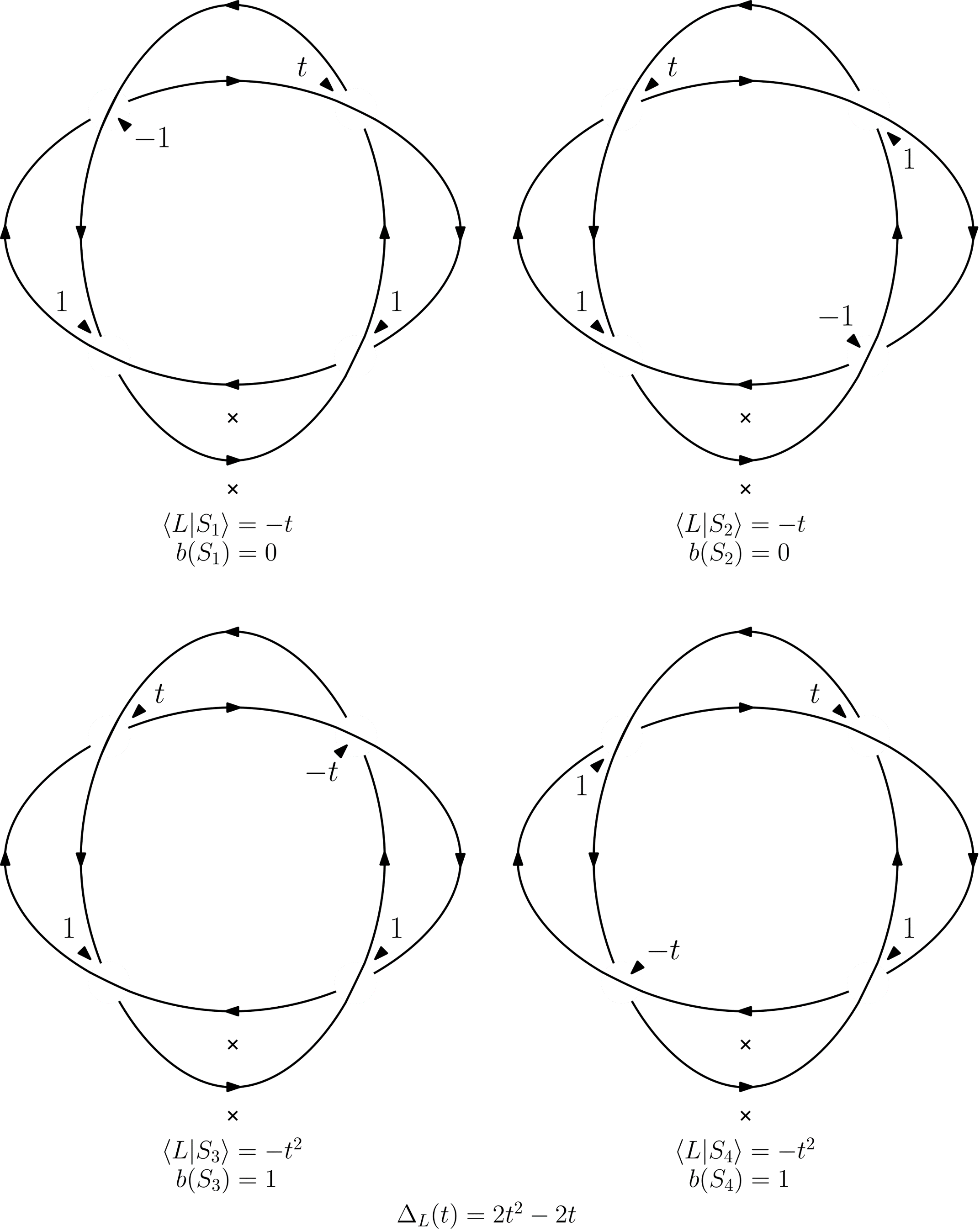}
\caption{The four states of the positive $K(2,2)$ along with their monomial weights and the resulting Alexander polynomial.  The two adjacent marked regions are each labeled with an x.}
\label{fig:state}
\end{figure}

\begin{theorem}[\cite{K06} pg. 176]
    For any link $L$, $\Delta_L(t)\sim\sum_S \langle L|S\rangle(-1)^{b(S)}$.
\end{theorem}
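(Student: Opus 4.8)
The plan is to recover the state sum directly from Alexander's original determinant formula for $\Delta_L(t)$, rather than from any skein-theoretic characterization. Given a link diagram $L$ with $n$ crossings, its underlying $4$-valent planar graph divides $S^2$ into $n+2$ regions, and Alexander associates to this data an $n\times(n+2)$ matrix $\mathcal{A}$ whose rows are indexed by crossings and whose columns are indexed by regions: the $(c,R)$ entry is $0$ unless $R$ is one of the four regions incident to $c$, in which case it is the weight $\{\pm 1,\pm t\}$ read off from precisely the local rule pictured just before the theorem — the very rule that defines Kauffman's state weights. Alexander's theorem says that deleting the two columns corresponding to any pair of adjacent regions yields an $n\times n$ matrix $N$ with $\det N \sim \Delta_L(t)$. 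The first step is to take the two deleted columns to be exactly Kauffman's two marked regions, so that the columns of $N$ are indexed by the unmarked regions.

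Next I would expand $\det N$ by the Leibniz formula. A term indexed by a bijection $\sigma$ from crossings to unmarked regions is nonzero precisely when $\sigma(c)$ is incident to $c$ for every crossing $c$; such bijections are exactly the Kauffman states $S$ (the state marker at $c$ points to $\sigma_S(c)$). For such an $S$, the product of matrix entries $\prod_c N_{c,\sigma_S(c)}$ is, by the way $\mathcal{A}$ was built, exactly the state monomial $\langle L|S\rangle$, signs included. Hence $\det N=\sum_S \sgn(\sigma_S)\,\langle L|S\rangle$, and the theorem reduces to the single identity $\sgn(\sigma_S)=\varepsilon\,(-1)^{b(S)}$ for a sign $\varepsilon\in\{\pm1\}$ that is independent of $S$; the ambiguity $\varepsilon$ is harmless since we only assert equality up to $\sim$.

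The heart of the argument, and the step I expect to be the main obstacle, is this sign identity, which is exactly the point of Kauffman's ``black hole'' convention. I would prove it by an invariance argument on the set of all states. Introduce an elementary move at a single crossing $c$: slide the marker at $c$ to an adjacent unmarked region $R'$, while sliding the marker that currently occupies $R'$ back to $c$ — in terms of $\sigma_S$ this composes with a transposition, hence multiplies $\sgn(\sigma_S)$ by $-1$. The combinatorial claim to verify is that this move also changes the parity of $b(S)$, so that $\sgn(\sigma_S)(-1)^{b(S)}$ is unchanged; one then checks that the graph on states with these moves as edges is connected (using that the region-adjacency graph of a connected diagram is connected, together with a parity/counting argument that any state can be driven to a fixed reference state), whence $\sgn(\sigma_S)(-1)^{b(S)}$ is a global constant $\varepsilon$. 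Verifying that a single marker slide flips the parity of the black-hole count is a finite local case analysis around $c$ — keeping track of the orientations of the two strands and of which of the four incident quadrants is marked — combined with bookkeeping of how the slide permutes the remaining markers; carrying this out carefully is where the genuine work lies, and it is essentially why Kauffman singles out the black-hole marker type.

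Finally, I would note an alternative route that sidesteps the explicit sign combinatorics: show that $\sum_S \langle L|S\rangle(-1)^{b(S)}$, viewed as an invariant of link diagrams, is unchanged under Reidemeister moves up to $\pm t^k$ and satisfies the Conway skein relation with the correct normalization on the unknot, so that uniqueness of the Conway/Alexander polynomial forces it to equal $\Delta_L(t)$. Conceptually this is cleaner, but checking the skein relation amounts to a crossing-by-crossing comparison of states, which is itself a nontrivial local computation; so the determinant-expansion proof is the one I would write out in full, its only delicate ingredient being the uniform sign $\varepsilon$.
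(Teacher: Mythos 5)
You should first note that the paper does not prove this statement at all; it is quoted from Kauffman \cite{K06}, so the only meaningful comparison is with Kauffman's own argument in \emph{Formal Knot Theory} --- and your outline in fact parallels it. The first half of your plan is sound and is the standard route: Alexander's matrix has rows indexed by crossings, columns by regions, with local entries exactly the labels $t,-t,1,-1$ of the state model; deleting the two columns of the marked (adjacent) regions gives $\det N\sim\Delta_L(t)$; the nonzero Leibniz terms are precisely the states, and the product of entries along a state is $\langle L|S\rangle$ (with the small caveat, which you should acknowledge, that a region may meet a crossing at two of its corners, in which case the matrix entry is a two-term sum and ``state'' must be read as an assignment of corners/markers rather than of regions). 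This correctly reduces the theorem to the sign-coherence statement that $\sgn(\sigma_S)(-1)^{b(S)}$ is the same for all states $S$.

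The genuine gap is in your proof of that sign-coherence. Your elementary move --- swap the markers of two crossings $c,c'$ through two regions incident to both --- is too broad for the local verification you announce. The local check does work, and is easy, in the special case where $c$ and $c'$ share an edge and the two markers rotate across it: the black-hole quadrant at a crossing is the unique quadrant bounded by the two incoming half-edges, a marker rotating across a half-edge changes black-hole status if and only if that half-edge is incoming, and a shared edge is incoming at exactly one of its two endpoints, so $b(S)$ changes parity. But for a swap between crossings that merely share two regions (possibly at opposite corners of each crossing), both markers can a priori move between non-black-hole quadrants, and no ``finite local case analysis around $c$'' decides the parity without global input; the statement is true only because of the theorem you are trying to prove. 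And once you restrict to the moves for which the local check is valid, the connectivity you need is exactly Kauffman's Clock Theorem --- the main combinatorial theorem of \emph{Formal Knot Theory} --- which cannot be dispatched by ``the region-adjacency graph is connected plus a parity/counting argument'': in general the perfect matchings of a bipartite incidence graph are \emph{not} connected under single transpositions (one may need long alternating cycles), so connectivity genuinely uses the planar link-universe structure and requires a real proof (Kauffman's clocked/counterclocked lattice argument). As written, the heart of the theorem is therefore asserted rather than proven; you should either invoke the Clock Theorem explicitly for the restricted clock moves, or supply an independent global argument for the constancy of $\sgn(\sigma_S)(-1)^{b(S)}$. The alternative skein-theoretic route you mention is likewise only a sketch, since verifying the skein relation for the state sum is itself the nontrivial step.
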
 

Kauffman also interprets these states graph theoretically. Given any diagram of a link, 2-color its regions so that at each crossing, no two regions of the same color are adjacent. Partition the set of regions according to color into sets $V$ and $V'$ where $V'$ is the set containing the exterior. Given a link diagram, the \textbf{checkerboard graph} of the link is the graph with vertex set $V$ where two vertices $r,r'\in V$ are connected by an edge if and only if their corresponding regions meet at a crossing in the link diagram. We will denote the checkerboard graph by $G$ and its planar dual by $G^*$. Notice that $G^*$ may be constructed in the same way by taking the vertex set to be $V'$. We refer to $G^*$ as the checkerboard dual graph. 

Kauffman provides a bijection between states of a link diagram and spanning trees of the checkerboard graph (or equivalently, spanning trees of the checkerboard dual). Given a state $S$, one draws an edge between a pair of vertices in its associated tree if and only if the regions corresponding to those vertices are separated by a crossing whose state marker is in one of those two regions (see Figure \ref{fig:tree}).  In this way, a state identifies a pair of spanning trees that we refer to as \textbf{dual spanning trees}: one in $G$ and one in $G^*$. We note that these dual spanning trees are not planar duals of each other; rather, they correspond to bases in the graphic matroids of  $G$ and  $G^*$ that are complements of one another. 

Conversely, given a spanning tree $T$ of $G$ and its dual spanning tree $T'$ in $G^*$ (which consists of the edges in $\Edg(G^*)$ which do not correspond to edges in $\Edg(T)$, see Figure \ref{fig:tree}), one recovers a state by assigning edge orientations that make $T$ and $T'$ oriented spanning trees rooted at the vertices corresponding to the two adjacent marked regions. When a directed edge is incident to a crossing, place a state marker pointing into that crossing from the region corresponding to the initial vertex of the edge, as in Figure \ref{fig:tree}. 

\begin{figure}
\centering
\includegraphics[width=11cm]{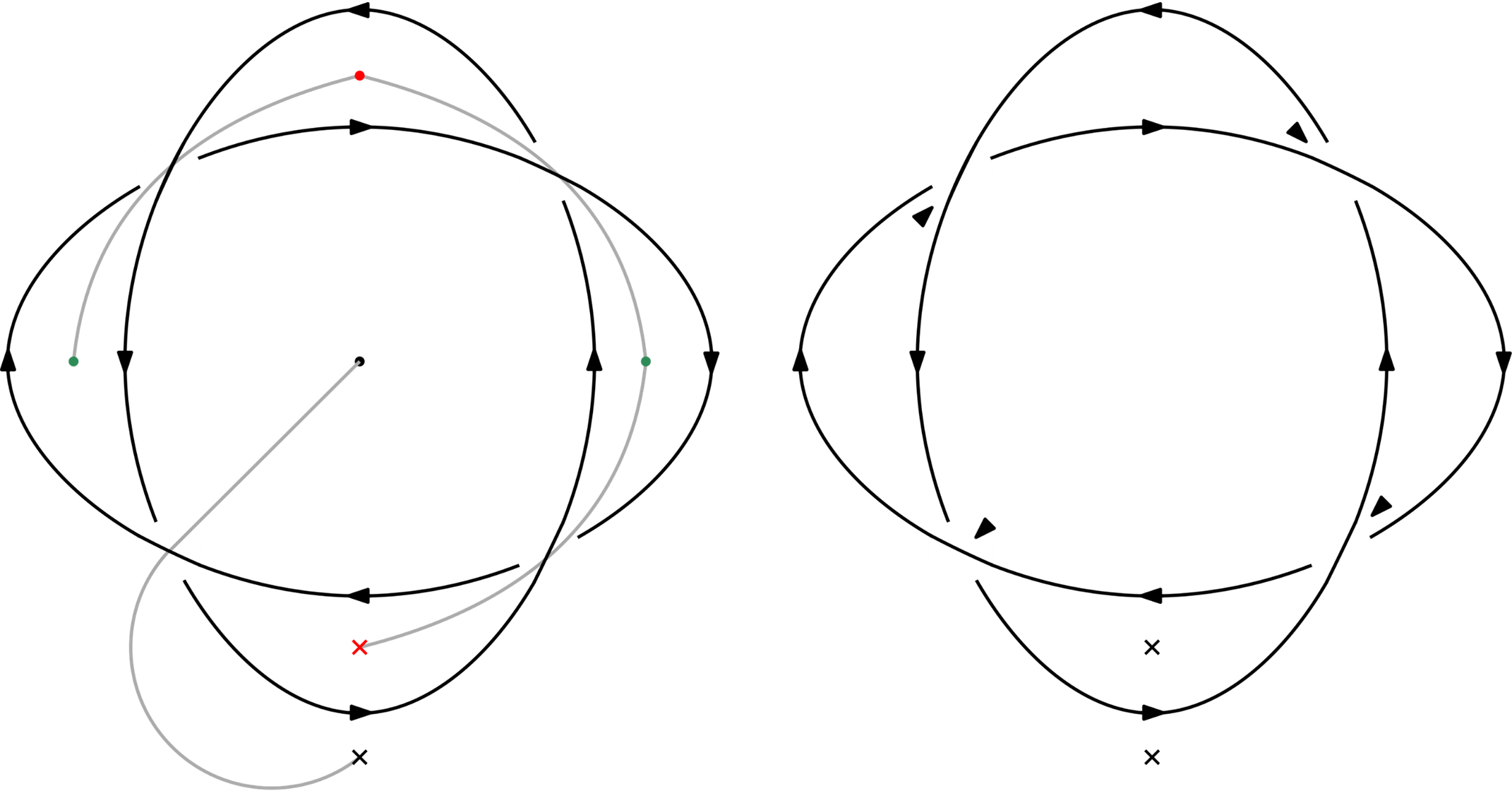}
\caption{A tree in the checkerboard graph of the positive $K(2,2)$ and its dual along with the corresponding state.}
\label{fig:tree}
\end{figure}

\subsection{Proof of Theorem \ref{thm2}}
To prove Theorem \ref{thm2}, we require the following lemma which we will also utilize in our weight-preserving bijection between the Crowell and Kauffman models.

\begin{lemma}
\label{SpanningTreesLemma}
Fix a bipartite graph $G$ with color classes $A$ and $B$, and let $r' \in \Ver(G)$. Let $S$ be any spanning tree of $G$.  Orient the edges of $S$ so that it is an oriented spanning tree  rooted at $r'$. The number of oriented edges $(a,b)\in\Edg(S)$ where $a\in A$ and $b\in B$ is independent of the choice of $S$.
\end{lemma}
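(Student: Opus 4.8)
The plan is to fix the root $r'$ once and for all and to show that for \emph{any} spanning tree $S$ of $G$, oriented toward $r'$, the number
\[
n(S) = \#\{(a,b) \in \Edg(S) : a \in A,\ b \in B\}
\]
of edges directed from the $A$-class to the $B$-class equals a quantity that depends only on $G$ and $r'$ — and not on $S$. The first step is to relate $n(S)$ to in-degrees within $S$. In an oriented spanning tree rooted at $r'$, every vertex $v \neq r'$ has out-degree exactly $1$ in $S$ (its unique edge toward $r'$), and $r'$ has out-degree $0$. Hence the number of edges of $S$ whose \emph{initial} vertex lies in $A$ is exactly $|A|$ if $r' \in B$, and $|A| - 1$ if $r' \in A$; call this constant $\alpha(G,r')$. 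Since $G$ is bipartite with classes $A$ and $B$, every edge of $S$ with initial vertex in $A$ has terminal vertex in $B$, so in fact $n(S) = \alpha(G,r')$ outright. Wait — this already finishes it, but let me double-check the subtlety: an edge $(a,b) \in \Edg(S)$ with $a \in A$ automatically has $b \in B$ because $G$ is bipartite, so counting "$A$-to-$B$" edges is the same as counting edges whose initial vertex is in $A$; and that count is governed purely by which vertices have out-degree $1$, i.e. all vertices except $r'$.

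So the key steps, in order, are: (1) recall that orienting a spanning tree toward a root $r'$ makes it an oriented spanning tree in which $\odeg_S(v) = 1$ for all $v \neq r'$ and $\odeg_S(r') = 0$; (2) conclude $\sum_{v \in A} \odeg_S(v) = |A| - [r' \in A]$, which is independent of $S$; (3) invoke bipartiteness of $G$ to identify this sum with $n(S)$, since each edge counted by the sum goes from $A$ to $B$ and every $A$-to-$B$ edge of $S$ is counted exactly once. This gives $n(S) = |A| - [r' \in A]$, a constant, which is the claim.

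The "main obstacle" is really just making sure the bookkeeping in step (1) is airtight — in particular that every non-root vertex contributes exactly one outgoing edge (no more, no fewer) — and that the bipartite structure is used correctly so that "initial vertex in $A$" and "edge from $A$ to $B$" genuinely coincide. There is nothing deep here; the lemma is a counting identity, and the only way to go wrong is to miscount the root's contribution or to forget that $S$, being a tree, has no edges internal to $A$ or to $B$. I expect the author's proof to follow essentially this route, possibly phrasing it via the observation that $|\Edg(S)| = |V(G)| - 1 = (|A| - 1) + |B|$ when $r' \in A$ (resp. $|A| + (|B| - 1)$ when $r' \in B$) and matching edges to their initial endpoints.
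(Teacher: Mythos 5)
Your proof is correct, and it takes a genuinely different (and more direct) route than the paper's. You observe that orienting a spanning tree toward the root gives $\odeg_S(v)=1$ for every $v\neq r'$ and $\odeg_S(r')=0$, so the number of edges of $S$ with initial vertex in $A$ is the constant $|A|-[r'\in A]$; bipartiteness then identifies this count with the number of $A$-to-$B$ edges, yielding the explicit formula $n(S)=|A|-[r'\in A]$ for every spanning tree at once. The paper instead argues in two stages: it first verifies the claim when $G$ is a single bipartite cycle (where it obtains exactly your formula for the path trees of the cycle), and then handles a general bipartite $G$ by connecting any two spanning trees through single edge exchanges, analyzing the unique cycle $C$ in $S\cup S'$, locating the one vertex $s$ where the component of $r'$ in $(S\cup S')\setminus C$ meets $C$, and applying the cycle case to $C\setminus\{e\}$ and $C\setminus\{e'\}$ rooted at $s$. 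Your argument is shorter, avoids the exchange-graph machinery entirely, and has the added benefit of producing a closed-form value for the invariant quantity rather than only its invariance; the paper's exchange argument yields invariance without ever naming the constant. Both are valid; yours is the more elementary proof of this particular lemma.
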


\begin{proof}
Since the color classes $A$ and $B$ form a proper 2-coloring of $G$, every edge with initial vertex in $A$ has the form $(a,b)\in\Edg(S)$ where $a\in A$ and $b\in B$.  If $S$ is an oriented spanning tree, then by definition, every non-root vertex $v\in \Ver(G)$ is the initial vertex of exactly one edge of $S$.  In particular, the number of edges $(a,b)$ with $a\in A$ and $b\in B$ is precisely the number of non-root vertices in $A$.  This number is $|A|$ if $r'\in B$ and $|A|-1$ if $r'\in A$.  In either case, this is independent of the choice of spanning tree.

%We first consider the case where $G$ is a bipartite cycle. It is easy to see that  the number of edges $(a,b)$ with $a\in A$ and $b\in B$ is $|A|$ if $r'\in B$ and $|A|-1$ if $r'\in A$.  In either case, this is independent of the choice of spanning tree.

%Next, let $G$ be an arbitrary bipartite graph, and let $r'\in\Ver(G)$. Any two spanning trees of $G$ are connected by a sequence of edge exchanges. It, thus, suffices to show that if $S$ and $S'$ are two spanning trees related by a single edge exchange and oriented so that they are oriented spanning trees rooted at $r'$, then they have the same number of oriented edges $(a,b)$ with $a\in A$ and $b\in B$.

%Notice that $S\cup S'$ contains a single undirected cycle $C$ with distinct edges $e,e'\in\Edg(C)$ such that $(S\cup S') \setminus \{e\}= S$ and $(S\cup S') \setminus \{e'\} = S'$. Consider the forest  $(S\cup S')\setminus C$. The component of this forest containing $r'$ intersects $C$ in at least one vertex. Indeed, it must intersect $C$ in exactly one vertex $s$ -- if it intersected $C$ in at least two distinct vertices, then $S\cup S'$ would admit two distinct (undirected) cycles. Treating $C\setminus \{e\}$ and $C\setminus \{e'\}$ as oriented spanning trees rooted at $s$, the above argument shows that they have the same number of oriented edges $(a,b)$ where $a\in A$ and $b\in B$.  Since $S\setminus C=S'\setminus C$, we conclude that $S$ and $S'$ have the same number of oriented edges $(a,b)$ where $a\in A$ and $b\in B$.
\end{proof}

For the remainder of this section, we will let $A$ and $B$ denote the color classes of $G$ corresponding to regions in the diagram of $L_{G}$ with counterclockwise oriented boundary and clockwise oriented boundary, respectively.

\begin{lemma}
\label{regionlabels}
Suppose each crossing of $L_{G}$ is labeled as in Kauffman's model.  Every meeting between a crossing and a region associated with an element of $A$ is labeled $t$, and every meeting between a crossing and a region associated with an element of $B$ is labeled $1$. (See Figure \ref{fig:crossings}).
\end{lemma}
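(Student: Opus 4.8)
The plan is to verify the assertion locally, at a single crossing of $L_{G}$, since both the corner labels and the regions are local data. Recall from Section~\ref{sec:SpecialAltDef} that the crossings of $L_{G}$ are exactly the vertices of the medial graph $M(G)$, so every crossing is the midpoint $v_{e}$ of some edge $e=\{a,b\}$ of $G$; since $G$ is bipartite, one endpoint lies in $A$ and the other in $B$, say $a\in A$ and $b\in B$. The four regions of the diagram incident to $v_{e}$ are the region $R_{a}$ surrounding $a$, the region $R_{b}$ surrounding $b$, and the two regions corresponding to the two faces of $G$ bordering $e$; moreover $R_{a}$ and $R_{b}$ occupy one pair of opposite corners at $v_{e}$ while the two face regions occupy the other pair. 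This step is purely combinatorial and uses only that $G$ is bipartite.

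The second step is to read off the orientations and the over/under data at $v_{e}$ from the construction of $L_{G}$. By the definition of the color classes, in the diagram of $L_{G}$ the boundary of $R_{a}$ is oriented counterclockwise and the boundary of $R_{b}$ is oriented clockwise. Tracking these two boundary orientations through the corner at $v_{e}$, together with the fact that the crossing is positive, determines the local picture at $v_{e}$ completely: the orientation of each of the two strands through $v_{e}$, which of them is the overstrand, and the placement of the four incident regions relative to the oriented understrand. Up to rotating the page, this is exactly the configuration appearing in Kauffman's labeling rule stated above, with $R_{a}$ sitting in the corner labeled $t$ and $R_{b}$ in the corner labeled $1$ (and the two face regions in the corners labeled $-t$ and $-1$); compare Figure~\ref{fig:crossings}. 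Hence every meeting of a crossing with an $A$-region is labeled $t$ and every meeting of a crossing with a $B$-region is labeled $1$.

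There is nothing deep in this argument, and I expect the only real obstacle to be the reconciliation of three sign conventions in the single local configuration that can occur: Kauffman's assignment of the four corner labels $t,-t,-1,1$ relative to the oriented understrand and overstrand; the counterclockwise/clockwise convention distinguishing the color class $A$ from $B$; and the convention that every crossing of $L_{G}$ is positive. In carrying this out it will be important that distinct crossings are brought into the model configuration only by rotating the page and never by a reflection: a reflection would simultaneously interchange $A$ and $B$, reverse every crossing sign, and swap the label classes $\{t,-t\}$ and $\{1,-1\}$, so it cannot be used.
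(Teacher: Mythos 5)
Your proposal is correct and takes essentially the same route as the paper's proof: a purely local check at each crossing, using that the crossing is positive and that the boundaries of $A$-regions and $B$-regions are oriented counterclockwise and clockwise respectively, to pin down the local configuration and match it (by rotation only) against Kauffman's labeling rule, i.e.\ against Figure~\ref{fig:crossings}. Your extra observations (that bipartiteness places the $A$- and $B$-regions at opposite corners, and that reflections must be avoided) are consistent with, and slightly more explicit than, the paper's one-line verification.
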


\begin{proof}
The requirements that every crossing of $L_{G}$ is positive, that the segments of $L_{G}$ surrounding regions in $A$ spin clockwise, and that the segments of $L_{G}$ surrounding regions in $B$ spin counterclockwise imply that each meeting between an edge of $G$ and a crossing of $L_{G}$ has the form shown in Figure \ref{fig:crossings}. The definition of Kauffman's model, thus, yields the desired result.
\end{proof}

\begin{figure}
\centering
\includegraphics[height=5cm]{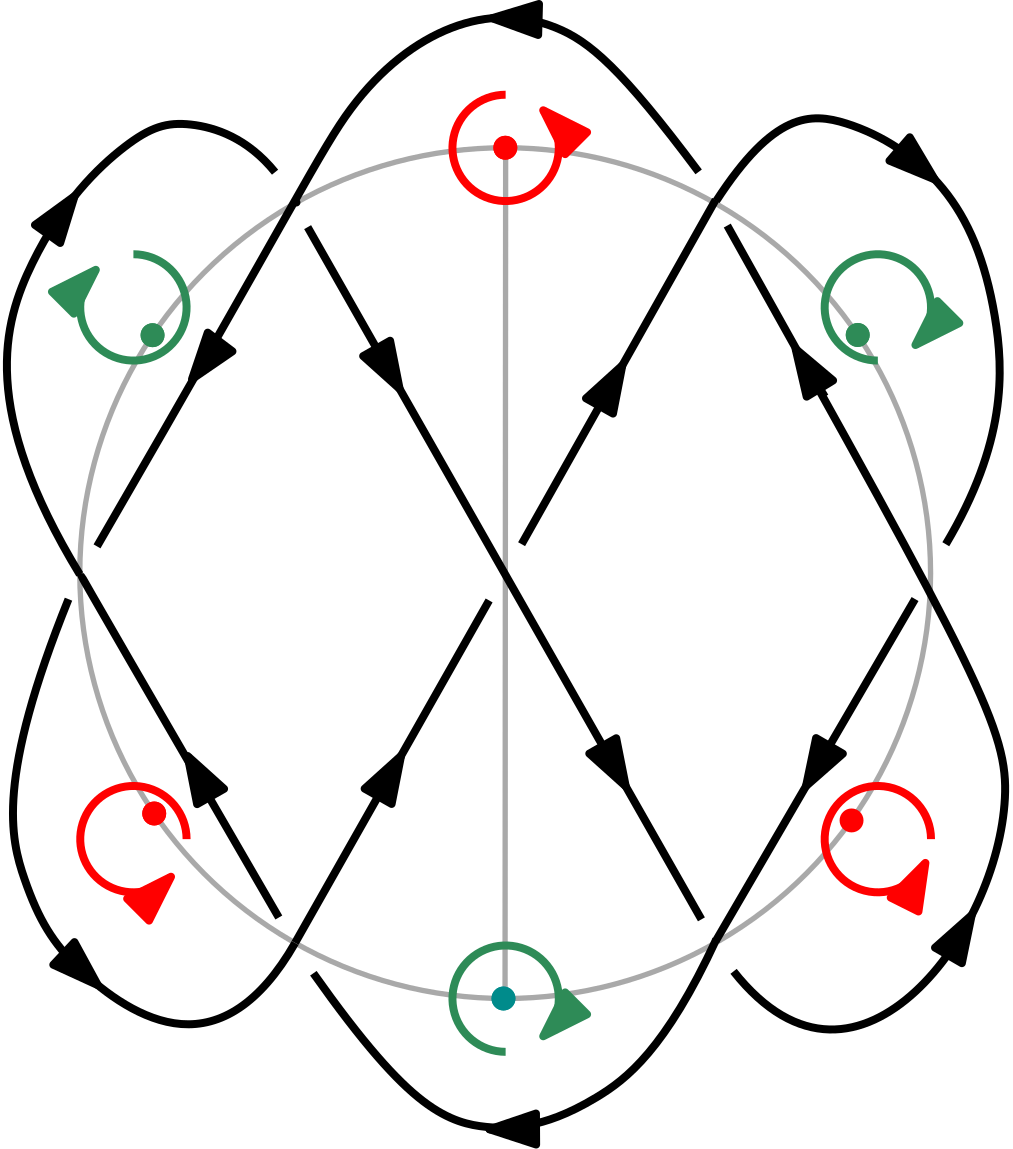}
\caption{An example of a special alternating link with edge orientations around each vertex of the checkerboard graph indicated.}
\label{fig:cw_ccw}
\end{figure}

\begin{figure}
\centering
\includegraphics[height=2.5cm]{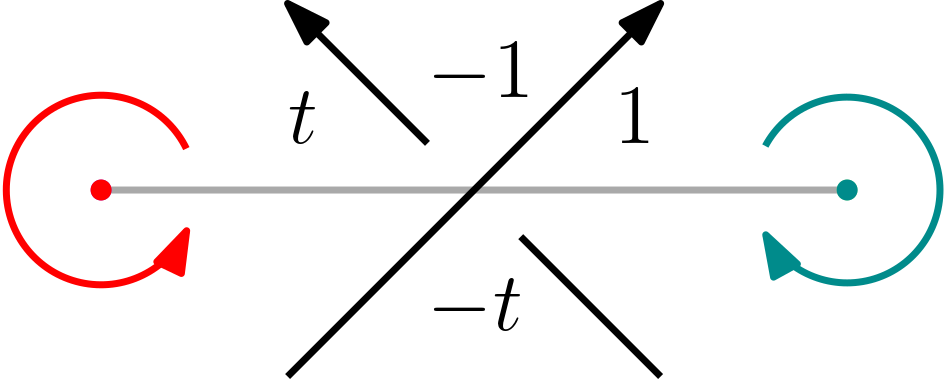}
\caption{The Kauffman weights at each region meeting a crossing with an edge in $G$.}
\label{fig:crossings}
\end{figure}

With this foundation, we proceed with the proof of Theorem \ref{thm2}.
\medskip

\noindent \textit{Proof of Theorem \ref{thm2}.}    
Given an alternating dimap $D$, we construct and orient the link $L_G$ as in Section \ref{sec:SpecialAltDef}.  Supoose $L_G$ is labeled as in Kauffman's model.  By construction, whenever an edge of $D$ meets a crossing in $L_G$, the edge points from a region labeled $-t$ into a region labeled $-1$ (as shown in Figure \ref{fig:example}).

\begin{figure}
\centering
\includegraphics[height=6cm]{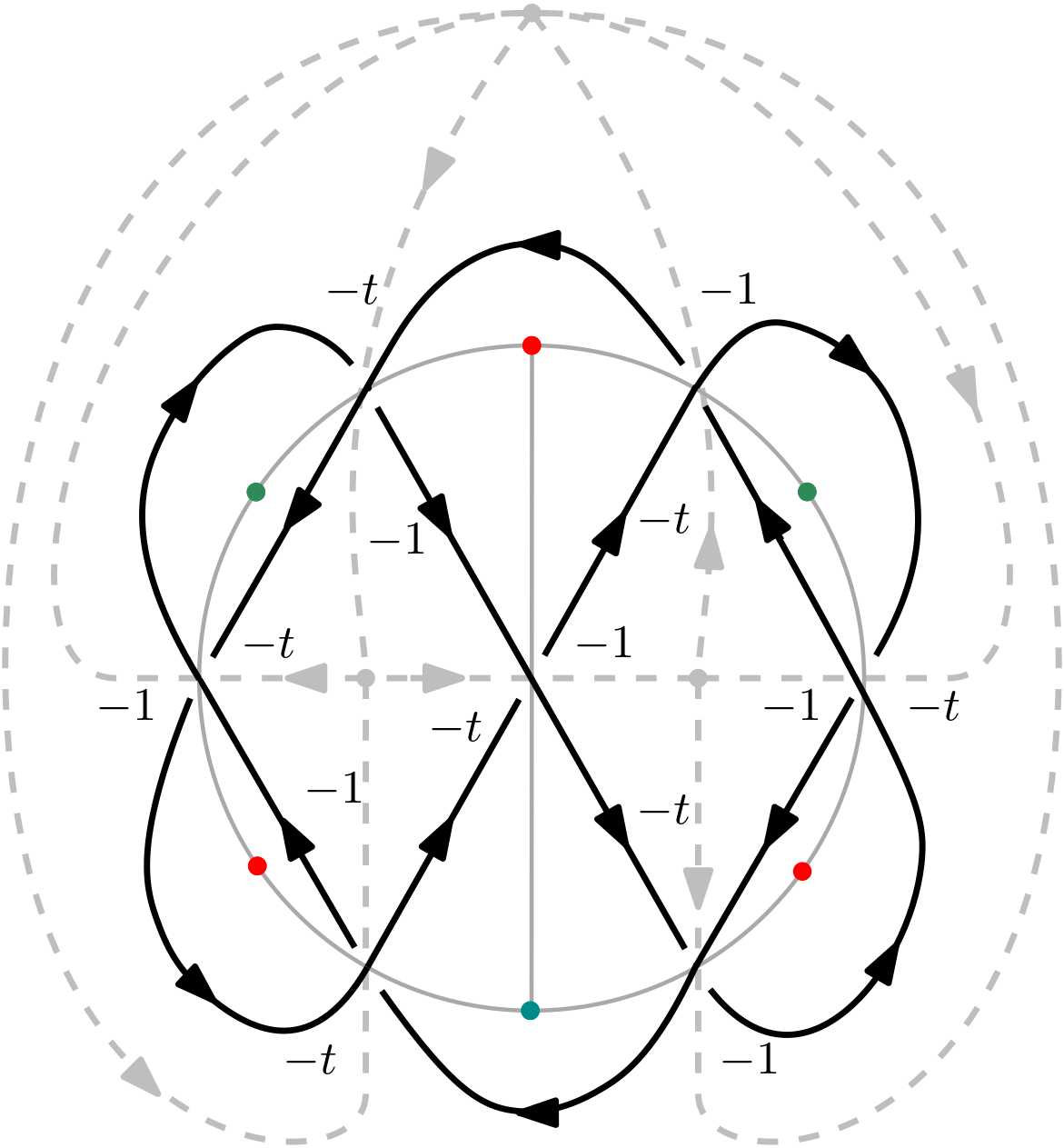}
\caption{A special alternating link $L_G$ with $D$ superimposed.  Here, edges of $D$ are weighted as in Kauffman's model.} 
\label{fig:example}
\end{figure}

Now, let $r'\in\Ver(G)$ and $r\in\Ver(D)$ denote vertices corresponding to adjacent regions in the link diagram $L_G$. Consider dual spanning trees $T$ and $T'$ of $D$ and $G$, respectively. Recall that when forming a state using the dual trees $T'$ and $T$ with roots $r'$ and $r$, an edge $e$ of $T$ corresponds to a state marker with label $-t$ if and only if when $T$ is rooted at $r$, $e$ points from a region labeled $-t$ into a region labeled $-1$ at its corresponding crossing (otherwise, the state marker has label $-1$). By the above construction, this occurs precisely when the orientation of $e$ yielded by rooting $T$ at $r$ matches the orientation of $e$ in $D$.

Let $\Edg_{\text{away}}(T)$ denote the subset of edges of $T$ such that reversing the orientation of each element of $\Edg_{\text{away}}(T)$ makes $T$ an oriented spanning tree. The edges of $\Edg_{\text{away}}(T)$ are precisely those whose orientations after rooting $T$ at $r$ are opposite their orientation in $D$. This implies that $T$ contributes monomial weight $t^{|\Edg(T)\setminus \Edg_{\text{away}}(T)|}$ to $P_D(t)$; in other words, if $T$ is a $k$-spanning tree, it contributes weight $t^{|\Ver(G)|-1-k}$.

Notice that an edge in $T'$ contributes nontrivially to its associated monomial if and only if it is directed \textit{away} from a vertex in $A$. In particular, by Lemma \ref{regionlabels}, the degree of this monomial is the number of edges $(a,b)$ where $a\in A$ and $b \in B$, when $T'$ is rooted at $r$. By Lemma \ref{SpanningTreesLemma}, this degree $m$ is independent of the choice of spanning tree.

All directed edges of $D$  meet crossings of the original link at black holes. Viewing any edge in $D$ as in Figure \ref{fig:crossings}, the directed edge meets the positive crossing from below where the segments of the link are also oriented upwards. The number of black holes in the state defined by $T$ and $T'$ is thus $|\Edg_{\text{away}}(T)|$. As such, no cancellation occurs in the state summation of a special alternating link.

We thus conclude that if $T$ is a $k$-spanning tree and $S$ is the state given by $T$ and $T'$, then $\langle L_G | S\rangle = t^mt^ {|\Ver(G)|-1-k}$, and we can write

\begin{align*}
    \Delta_{L_G}(-t)\sim\sum_S |\langle L_G|S\rangle| & = \sum_{k=0}^{|\Ver(D)|-1} \sum_{S \ \text{such that T is a $k$-spanning tree}} |\langle L_G|S\rangle|\\
    & =\sum_{k=0}^{|\Ver(D)|-1} \sum_{S \ \text{such that T is a $k$-spanning tree}} t^mt^ {|\Ver(D)|-1-k}\\
    & = \sum_{k=0}^{|\Ver(D)|-1}c_k(D,r) t^mt^ {|\Ver(D)|-1-k}\\
    &\sim\sum_{k=0}^{|\Ver(D)|-1}c_k(D,r)t^{|\Ver(D)|-1-k}
\end{align*}
\qed

\begin{remark} \label{rem:K} The proof of Theorem \ref{thm2} shows that in the case of special alternating links $L_G$, the Alexander polynomial $\Delta_{L_G}(-t)=P_D(t)$ can be calculated by collecting terms $t^{|\Edg(T)\setminus \Edg_{\text{away}}(T)|}$ for each spanning tree $T$ of $D$ (with specified arbitrary root $r$). We will later use the notation $\wt_K(T)=t^{|\Edg(T)\setminus \Edg_{\text{away}}(T)|}$ where the subscript $K$ indicates that we are dealing with the Kauffman weight for special alternating links.  
\end{remark}

\section{A note on generalizing Fox's conjecture for special alternating links}
\label{sec:conj}

We note that Conjecture \ref{conj:log-concave} is a generalization of Fox's Conjecture \ref{fox} for special alternating links. Recall Conjecture \ref{conj:log-concave}:

\begin{recallconjecture}
   For any Eulerian digraph $D$, the coefficients of the  polynomial $P_D(t)$ form a log-concave sequence with no internal zeros.
\end{recallconjecture}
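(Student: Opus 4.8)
The plan is to exploit the root-polytope description of $P_D(t)$ furnished by Theorem~\ref{MatroidVersion}, together with the palindromicity of its coefficients from Theorem~\ref{thm:ourbest}. The crucial point is that the right-hand side of \eqref{eq} is, by the work of Li and Postnikov~\cite{slicing}, a generating function recording volumes in a parallel-hyperplane slicing decomposition of a zonotope $Z$ built from a totally unimodular matrix $A$ representing $M^{*}$; since zonotopes are centrally symmetric, the resulting slice data is symmetric about the center of $Z$, which is exactly the geometric shadow of the palindrome $c_{k}(D,r)=c_{|V(D)|-1-k}(D,r)$. The first step would be to make this identification precise: to exhibit a centrally symmetric polytope $Z$ and an affine functional $\ell$ so that, up to one positive constant independent of $k$, the count $c_{k}(D,r)$ equals a volume attached to the $k$-th piece of the decomposition of $Z$ by $\ell$ --- either the $(\dim Z-1)$-dimensional volume of the slice $Z\cap\{\ell=k\}$ or the full-dimensional volume of the slab $Z\cap\{k\le\ell\le k+1\}$. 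This should follow by reconciling Theorem~\ref{MatroidVersion}, the identity $\mathrm{Vol}(\mathcal{Q}_{A})=c_{0}(D,r)$ of Corollary~\ref{VolMatroidCase}, the inclusion--exclusion expansion of Corollary~\ref{CountingArbsVol}, and the Li--Postnikov slicing formula.

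Granting such an interpretation, log-concavity with no internal zeros becomes a consequence of the Brunn--Minkowski inequality. Writing $f(s)$ for the $(\dim Z-1)$-dimensional volume of $Z\cap\{\ell=s\}$, the function $f$ is supported on a genuine interval and $f^{1/(\dim Z-1)}$ is concave there, so $f(k)^{2}\ge f(k-1)f(k+1)$ for all integers $k$; the slab version behaves the same way, since a convolution of log-concave functions is log-concave. Because the heights at which the slice is full-dimensional form an interval, the nonzero $c_{k}(D,r)$ occupy a contiguous block of integers, which is the ``no internal zeros'' half of the statement. A parallel route would be to show that the homogenization $\sum_{k}c_{k}(D,r)\,t^{k}s^{|V(D)|-1-k}$ is, after normalization, a Lorentzian polynomial (in the sense of Br\"and\'en--Huh); since volume polynomials of polytopes are Lorentzian this is morally the same assertion recast through the Alexandrov--Fenchel inequalities, and it would again deliver log-concavity together with $\mathrm{M}$-convexity of the support.

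For the special family in Section~\ref{sec:conj} one can bypass the general slicing statement and argue by direct computation. A natural candidate family consists of Eulerian digraphs for which $P_{D}(t)$ factors into manifestly log-concave pieces --- for instance those obtained by replacing each edge of an undirected graph $G$ with a pair of antiparallel edges, and, more generally, digraphs glued along cut vertices from such doubled graphs and from directed cycles. For the doubled graph one computes directly that $c_{k}(D,r)=\tau(G)\binom{|V(G)|-1}{k}$, where $\tau(G)$ denotes the number of spanning trees of $G$, so that $P_{D}(t)=\tau(G)(1+t)^{|V(G)|-1}$; gluing at a cut vertex multiplies the polynomials, and a directed cycle of length $\ell$ contributes $1+t+\dots+t^{\ell-1}$ (by the BEST-type count underlying Theorem~\ref{CountingArbs}). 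Log-concavity with no internal zeros then follows because the convolution of finitely supported nonnegative log-concave sequences with no internal zeros again has these properties.

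The main obstacle is the identification in the first paragraph. Although $c_{0}(D,r)$ is a genuine volume by Corollary~\ref{VolMatroidCase}, Corollary~\ref{CountingArbsVol} only presents $c_{k}(D,r)$ as a \emph{signed} alternating sum of volumes of root polytopes of the contractions $D/E'$, and it is not clear a priori that this alternating sum collapses to a single volume in one fixed slicing decomposition with no lower-order correction; on the Lorentzian side the analogous difficulty is that one would need Hodge--Riemann-type inequalities valid beyond the matroidal setting in which the planar case was handled. It is precisely this matching that the special alternating link case of \cite{hafner2023logconcavity} resolved through the Crowell state model, and pushing it through for an arbitrary Eulerian digraph is the essential remaining difficulty.
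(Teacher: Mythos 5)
You should be clear at the outset that the statement you are addressing is Conjecture~\ref{conj:log-concave}: the paper does not prove it, and neither does your proposal. Your first two paragraphs are a strategy, not an argument. Everything hinges on identifying $c_k(D,r)$, up to a single positive constant, with the volume of the $k$-th slab (or slice) of a centrally symmetric zonotope cut by a fixed affine functional, and you only assert that this ``should follow'' from reconciling Theorem~\ref{MatroidVersion}, Corollary~\ref{VolMatroidCase}, Corollary~\ref{CountingArbsVol}, and the Li--Postnikov slicing formula of \cite{slicing}. As you concede in your last paragraph, Corollary~\ref{CountingArbsVol} only expresses $c_k(D,r)$ as a \emph{signed} alternating sum of volumes of root polytopes of contractions $D/E'$, and no argument is given that this collapses to a single slab volume in one fixed slicing; the Lorentzian/Alexandrov--Fenchel alternative is likewise only a sketch. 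If that identification could be made, the Brunn--Minkowski/Pr\'ekopa step you describe would indeed yield log-concavity with contiguous support, so the missing step is exactly the one you flag --- but flagging it does not fill it, and the general statement remains open after your proposal just as it does in the paper, where only the alternating dimap case (Theorem~\ref{ourlogconc}, via \cite{hafner2023logconcavity}) and the special family of Section~\ref{sec:conj} are established.

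Where you do give a complete argument --- the special family --- it is correct and matches the paper's Section~\ref{sec:conj} in scope but not in method. The paper treats Eulerian digraphs having the same number of edges from $i$ to $j$ as from $j$ to $i$ (precisely your ``doubled'' graphs) and proves ultra log-concavity via the determinantal formula of Lemma~\ref{lem:Laplacian}: the reduced Laplacian is symmetric, so $P_D(t)=\det(\overline{L(D)}+t\overline{L(D)}^T)=(1+t)^{n-1}\det(\overline{L(D)})$. You instead count directly: each spanning tree of the underlying multigraph $G$ lifts to $2^{|V(G)|-1}$ spanning trees of $D$, of which exactly $\binom{|V(G)|-1}{k}$ are $k$-spanning trees, giving $c_k(D,r)=\tau(G)\binom{|V(G)|-1}{k}$ and hence the same polynomial $\tau(G)(1+t)^{|V(G)|-1}$ by an elementary bijection that avoids the Laplacian altogether. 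Your extensions --- multiplicativity of $P_D$ over blocks glued at cut vertices (which needs, and gets, the root-independence of Theorem~\ref{thm:ourbest}) and the factor $1+t+\cdots+t^{\ell-1}$ for a directed $\ell$-cycle --- are also correct and reach slightly beyond the paper's stated proposition, while still falling far short of the general conjecture.
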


While the methods of \cite{hafner2023logconcavity} settled the alternating dimap case of Conjecture \ref{conj:log-concave}, they cannot readily be adapted to all Eulerian digraphs. We offer here another case in which Conjecture \ref{conj:log-concave} holds.  
   
   \medskip
   
The \textbf{Laplacian matrix} of an Eulerian digraph $D$ on the vertex set $[n]=\{1, \ldots, n\}$ is the matrix $L(D)=(l_{i,j})_{i, j \in [n]}$, defined by

$$l_{i,j}=\begin{cases}
\odeg(i) & \text{for} \ i=j \\
|\{e\in \Edg(D) \ : \ $e$ \ \text{is an edge from} \ i \ \text{to} \ j\}| & \text{for} \ i\neq j
\end{cases}.$$

Let $\overline{L(D)}$ be the reduced Laplacian obtained from $L$ by removing a row and a column (of the same index). We will make use of the following straightforward consequence of the matrix-tree theorem.

\begin{lemma}[\cite{even}]
    \label{lem:Laplacian}
    For any Eulerian directed graph, $P_D(t)$ can be expressed via the determinantal formula
$$P_D(t)=\det(\overline{L(D)}+t\overline{L(D)}^T).$$
\end{lemma}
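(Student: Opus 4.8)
The plan is to prove Lemma~\ref{lem:Laplacian} by relating both sides to weighted counts of spanning trees of $D$, using the Kauffman weight $\text{wt}_K$ from Remark~\ref{rem:K} as the bridge. Recall that by that remark, $P_D(t)=\sum_{T}\text{wt}_K(T)=\sum_T t^{|\Edg(T)\setminus \Edg_{\text{away}}(T)|}$, where the sum is over spanning trees $T$ of $D$ with a fixed root $r$ (say $r=n$), and $\Edg_{\text{away}}(T)$ is the set of edges of $T$ that point away from $r$ when $T$ is viewed as an unoriented tree carrying the orientation inherited from $D$. So it suffices to show that $\det(\overline{L(D)}+t\,\overline{L(D)}^T)$, with the row and column indexed by $n$ removed, expands as this same sum over spanning trees.

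First I would set up a "two-variable" Laplacian: introduce the matrix $N = x\cdot(\text{out-part}) + y\cdot(\text{in-part})$ more precisely, write $L(D)+$ its transpose-weighted analogue, but the cleanest route is to directly apply a weighted Matrix--Tree Theorem. Define a weighting of edges of $D$ where each edge $e=(i,j)$ of $D$ receives a formal weight; then $\overline{L(D)}+t\overline{L(D)}^T$ is the reduced Laplacian of $D$ in which the edge $e=(i,j)$ contributes $1$ to $l_{i,i}$ (from the out-degree term) and $1$ to $l_{j,j}$ (from the transposed out-degree term, since $j$ has out-degree counted in $\overline{L(D)}^T$ via... ) --- here I need to be careful, so the genuine first step is to unwind exactly which entry of $\overline{L(D)}+t\overline{L(D)}^T$ records which edge, and with which power of $t$. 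The entry $(\overline{L(D)})_{i,j}$ for $i\neq j$ is $-(\#\text{edges }i\to j)$ and for $i=j$ is $\odeg(i)$; transposing swaps the roles of head and tail. Then I would invoke the all-minors / weighted Matrix--Tree Theorem: $\det(\overline{L(D)}+t\overline{L(D)}^T)$ equals the sum over spanning trees $T$ of $D$, with each tree weighted by a product over its edges, where an edge contributes a factor $1$ or $t$ according to its orientation relative to the root $r=n$. Tracking the bookkeeping, an edge of $T$ pointing toward $r$ should contribute $1$ and an edge pointing away from $r$ should contribute $t$ (or vice versa), matching $t^{|\Edg_{\text{away}}(T)|}$; comparing with Remark~\ref{rem:K}, which gives $t^{|\Edg(T)|-|\Edg_{\text{away}}(T)|}$, amounts to the palindromy already established in Theorem~\ref{thm:ourbest}, since $|\Edg(T)|=|V(D)|-1$, so $P_D(t)$ and the determinant agree up to the substitution $t\mapsto 1/t$ and multiplication by $t^{|V(D)|-1}$, hence agree outright by palindromicity.

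The key steps in order: (1) pin down the entries of $\overline{L(D)}+t\overline{L(D)}^T$ in terms of the edges of $D$ and the parameter $t$; (2) apply the weighted directed Matrix--Tree Theorem (in the form where the Laplacian has generic edge weights and the determinant of the reduced matrix enumerates arborescences/spanning trees rooted at the deleted vertex) to expand the determinant as a sum over spanning trees of $D$, each weighted by a monomial in $t$ determined by the number of its edges directed toward versus away from $r$; (3) identify that monomial with $\text{wt}_K(T)$ up to the reversal $t\leftrightarrow t^{-1}$; (4) conclude equality with $P_D(t)$ using Remark~\ref{rem:K} and the palindromicity from Theorem~\ref{thm:ourbest}.

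The main obstacle will be step (2): the standard directed Matrix--Tree Theorem is usually stated so that $\det$ of the reduced out-degree Laplacian counts arborescences \emph{oriented away from} (or toward) a fixed root, but here the matrix $\overline{L(D)}+t\overline{L(D)}^T$ mixes the out-Laplacian and the in-Laplacian, so the combinatorial objects it enumerates are not pure arborescences but rather all spanning trees with a weight recording how far each tree is from being an arborescence --- exactly the $k$-spanning trees of Section~\ref{sec:k}. Making the Cauchy--Binet / deletion-contraction argument produce precisely $\sum_k c_k(D,r)t^{?}$ with the correct exponent, and verifying the exponent matches $\text{wt}_K$ rather than its reverse, is the delicate part; the root-independence and palindromy from Theorems~\ref{best} and~\ref{thm:ourbest} are what let me absorb any leftover ambiguity between $t^k$ and $t^{|V(D)|-1-k}$. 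An alternative, perhaps cleaner, route for step (2) is to expand $\det(\overline{L(D)}+t\overline{L(D)}^T)$ by multilinearity of the determinant in its columns, splitting each column into its $\overline{L(D)}$ part and its $t\overline{L(D)}^T$ part, thereby writing the determinant as a sum of $2^{n-1}$ terms, each a product $t^{|J|}\det(\text{hybrid matrix})$ over subsets $J$ of columns taken from the transposed block, and then interpreting each hybrid determinant via a bipartite-orientation Matrix--Tree argument; I would pursue whichever of these makes the exponent bookkeeping most transparent.
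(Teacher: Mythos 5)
The paper itself does not prove Lemma~\ref{lem:Laplacian} (it is imported from \cite{even}), so your outline has to stand on its own, and as written it has a genuine gap precisely at the step you yourself flag as the ``main obstacle.'' The weighted directed Matrix--Tree theorem applies to a matrix that is visibly the (reduced) Laplacian of some weighted digraph; $\overline{L(D)}+t\overline{L(D)}^T$ is not obviously such a matrix, and neither of your two proposed mechanisms (a Cauchy--Binet/deletion--contraction argument, or the $2^{n-1}$-term multilinearity expansion into hybrid determinants) is actually carried out, so step (2) is asserted rather than proved. The tell is that your sketch never invokes the Eulerian hypothesis, which is essential: for the non-Eulerian digraph $D$ with $\Ver(D)=\{1,2\}$ and edges $1\to 2$, $1\to 2$, $2\to 1$, reducing at $r=2$ gives $\det(\overline{L(D)}+t\overline{L(D)}^T)=2+2t$ while $\sum_k c_k(D,r)t^k=2+t$. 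Hence any argument of the shape you describe that never uses $\indeg=\odeg$ cannot be complete. (A side remark on step (1): you read the off-diagonal entries as $-\#\{\text{edges } i\to j\}$; that standard sign convention is indeed the one under which the lemma holds, even though the displayed definition in Section~\ref{sec:conj} omits the minus sign.)

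The single observation that closes the gap --- and the exact place where Eulerianity enters --- is that $L(D)+tL(D)^T$ \emph{is} the weighted out-degree Laplacian of the digraph $D_t$ obtained from $D$ by keeping every edge with weight $1$ and adjoining its reversal with weight $t$: the off-diagonal entries match by construction, and the diagonal entry $(1+t)\odeg(i)$ equals the weighted out-degree $\odeg(i)+t\,\indeg(i)$ of $i$ in $D_t$ precisely because $\indeg(i)=\odeg(i)$. Tutte's weighted Matrix--Tree theorem applied to $D_t$, reduced at $r$, then expresses the determinant as the sum over spanning trees oriented toward $r$ (no tree can use both copies of an edge, as that would create a cycle) of $t^{\#\{\text{edges used in reversed form}\}}$, i.e. $\sum_T t^{|\Edg_{\text{away}}(T)|}=\sum_k c_k(D,r)t^k=P_D(t)$ on the nose. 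This also removes the detour in your steps (3)--(4): you need neither Remark~\ref{rem:K} (which is stated only for alternating dimaps, not for general Eulerian digraphs) nor the palindromy of Theorem~\ref{thm:ourbest} to repair a possible $t\leftrightarrow t^{-1}$ mismatch, since the exponent comes out as $|\Edg_{\text{away}}(T)|$, matching the definition of $P_D$ directly. (Your appeal to palindromy would not be circular, as Theorem~\ref{thm:ourbest} is proved independently in Section~\ref{sec:k}, but it becomes unnecessary once the Laplacian identification is made.)
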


\medskip

   Recall that a sequence of nonnegative integers $a_0, \ldots, a_d$  is ultra log-concave if $$\frac{a_k^2}{{d \choose k}^2}\geq \frac{a_{k+1}}{{d \choose {k+1}}} \frac{a_{k-1}}{{d \choose {k-1}}} \text{ for all }0<k<d.$$  In particular, ultra log-concavity implies log-concavity. 

\begin{proposition} The sequence of coefficients of $P_D(t)$ is ultra log-concave for all Eulerian diagraphs $D$ on the vertex set $[n]$ which have the same number of edges going from vertex $i$ to vertex $j$ as from vertex $j$ to vertex $i$ for all $i,j \in [n]$. 
\end{proposition}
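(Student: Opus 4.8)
The strategy is to show that the symmetry hypothesis forces $P_D(t)$ to be a constant multiple of $(1+t)^{n-1}$, from which ultra log-concavity is immediate. First I would observe that the hypothesis---that $D$ has the same number of edges from $i$ to $j$ as from $j$ to $i$ for all $i,j\in[n]$---says precisely that the Laplacian matrix $L(D)$ is symmetric: its off-diagonal entry $l_{i,j}$ counts the edges from $i$ to $j$, which equals $l_{j,i}$ by assumption, and the diagonal entries are irrelevant to symmetry. Hence the reduced Laplacian $\overline{L(D)}$ is symmetric, so $\overline{L(D)}^T=\overline{L(D)}$.

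Next I would feed this into Lemma~\ref{lem:Laplacian}:
\[
P_D(t)=\det\!\big(\overline{L(D)}+t\,\overline{L(D)}^T\big)=\det\!\big((1+t)\,\overline{L(D)}\big)=(1+t)^{n-1}\,\det\!\big(\overline{L(D)}\big).
\]
Setting $t=0$ identifies $\det(\overline{L(D)})$ with $c_0(D,r)$, the number of oriented spanning trees of $D$ rooted at $r$ (by Lemma~\ref{lem:Laplacian} again, or directly by the matrix--tree theorem); since an Eulerian digraph is strongly connected, this is a positive integer which I will denote $\kappa$. Thus the coefficient of $t^k$ in $P_D(t)$ equals $a_k=\kappa\binom{n-1}{k}$ (and, as an aside, $c_k(D,r)=\kappa\binom{n-1}{k}$ for these digraphs).

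Finally I would check ultra log-concavity straight from the definition: with $d=n-1$ and $a_k=\kappa\binom{d}{k}$, the ratio $a_k/\binom{d}{k}$ equals the constant $\kappa$ for every $k$, so the defining inequality $a_k^2/\binom{d}{k}^2\ge \big(a_{k+1}/\binom{d}{k+1}\big)\big(a_{k-1}/\binom{d}{k-1}\big)$ becomes $\kappa^2\ge\kappa^2$ and holds with equality; positivity of $\kappa$ precludes internal zeros. The only step needing any care---the "main obstacle," such as it is---is the assertion that $\det(\overline{L(D)})\ne 0$, i.e.\ that a connected Eulerian digraph admits an oriented spanning tree rooted at a prescribed vertex; this is standard and follows from strong connectivity. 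Everything else is a formal computation.
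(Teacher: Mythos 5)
Your proposal is correct and follows essentially the same route as the paper: symmetry of the hypothesis makes $\overline{L(D)}$ symmetric, so the determinantal formula of Lemma~\ref{lem:Laplacian} gives $P_D(t)=(1+t)^{n-1}\det(\overline{L(D)})$, and ultra log-concavity of the binomial coefficients finishes the argument. Your extra remarks identifying $\det(\overline{L(D)})$ with $c_0(D,r)>0$ and checking the inequality directly are fine but not needed beyond what the paper does.
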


\proof 

If $D$ has the same number of edges going from vertex $i$ to vertex $j$ as from vertex $j$ to vertex $i$ for all $i,j \in [n]$, then $\overline{L(D)}$ is symmetric: $\overline{L(D)} ^T = \overline{L(D)}$. In this case, Lemma \ref{lem:Laplacian} yields: 

$$P_D(t)  = \det(\overline{L(D)}+t\overline{L(D)}^T)=(t+1)^{n-1} \det(\overline{L(D)}).$$

Since the sequence of coefficients of $(t+1)^{n-1}$ is ultra log-concave, we achieve the desired result.

\qed

We note that in general, ultra log-concavity of the coefficients of $P_D(t)$ does not hold. For example, consider the case when $D$ is an oriented three-cycle (see Figure \ref{fig:trefoil_not_ultra}). In this case, the sequence of coefficients of $P_D(t)$ is $(1,1,1)$ which is not ultra log-concave.

\begin{figure}
\centering
\includegraphics[height=5cm]{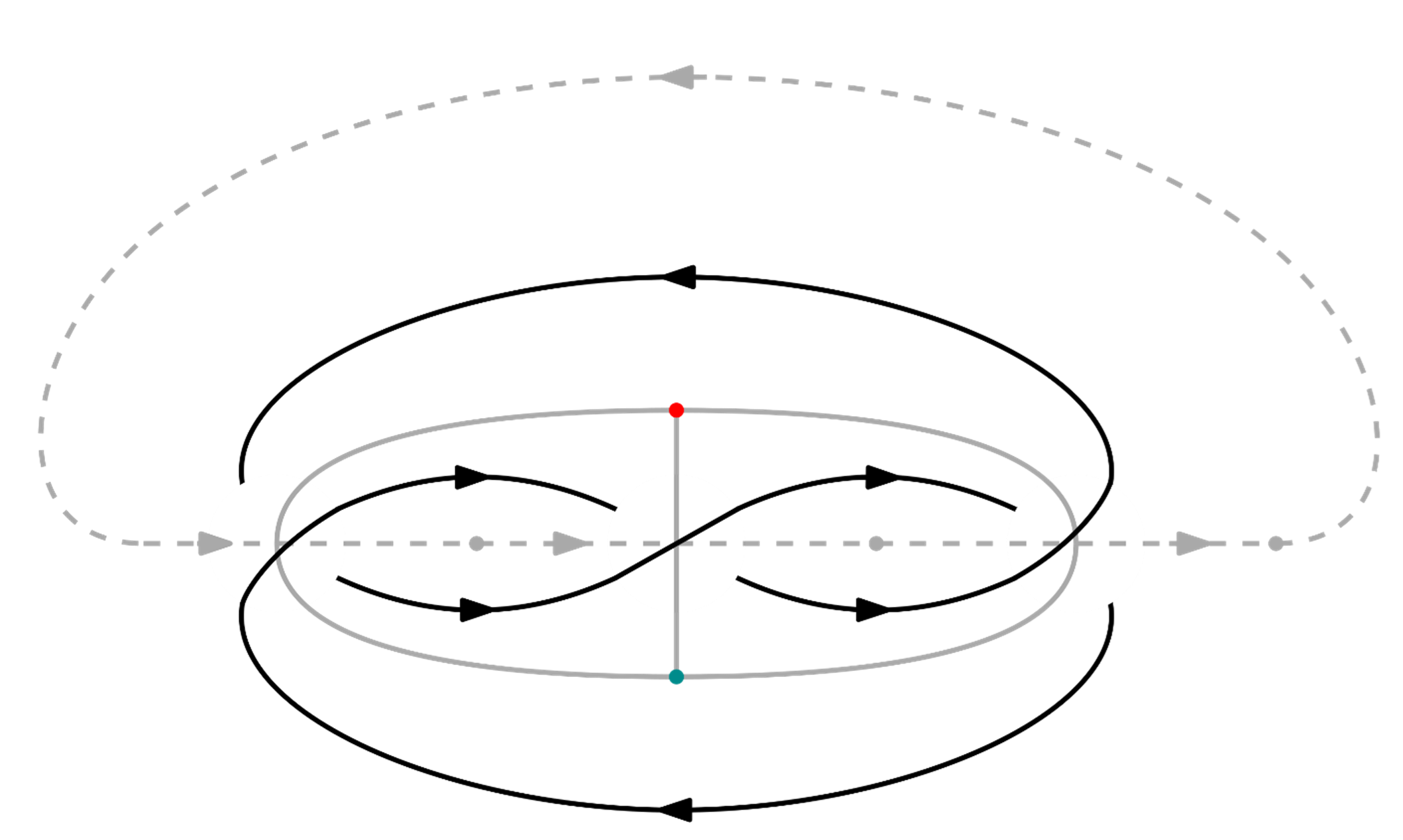}
\caption{The oriented three-cycle $D$, superimposed on a link diagram of the trefoil knot and its checkerboard graph.}
\label{fig:trefoil_not_ultra}
\end{figure}

\section{Bijection between the Crowell and Kauffman state models}
\label{sec:bij}

This section is devoted to relating the first state model discovered for the Alexander polynomial: Crowell's model \cite{Crowell} to the most well-known one: the Kauffman state model \cite{K06}. In Theorem \ref{maintheorem}, we define a weight-preserving bijection between the states of the Crowell model and the states of the Kauffman model for special alternating links.

Throughout this section, $G$ denotes the planar bipartite graph which is the checkerboard graph of the special alternating link $L:=L_G$, and $D$ denotes the directed Eulerian dual of $G$  (see Section \ref{sec:SpecialAltDef} and  Figure \ref{fig:bipandlink}).

 \subsection{Crowell's model} \label{Sec:Crowell} The following combinatorial model for the Alexander polynomial of alternating links is due to Crowell \cite{Crowell}. Recall that a link is \textit{alternating} if it has an \textit{alternating diagram}. A link diagram is \textit{alternating} if its undercrossings and overcrossings alternate as we trace along each component.
 
 Let $\mathcal{G}(L)$ be the planar graph obtained by flattening the crossings of an alternating diagram of $L$: the crossings of $L$ are the vertices of $\mathcal{G}(L)$, and the arcs between the crossings are the edges of $\mathcal{G}(L)$.  Note that $\mathcal{G}(L)$ is a planar $4$-regular graph. Next, we assign directions and weights to the edges of $\mathcal{G}(L)$ in the following way:

\begin{center} \begin{tikzpicture}
\draw[black, thick] (1,0) -- (1,.9);
\draw[black, thick] (1,1.1) -- (1,2);
\draw[-stealth,black, thick] (0,1) -- (2,1);
\draw[-stealth,black,thick] (5,0)--(5,1);
\draw[-stealth,black,thick] (5,2)--(5,1);
\draw[black,thick] (4,1) --(6,1);
\node at (3,1.2) {becomes};
\node at (4.8,1.5) {$1$};
\node at (4.7,.5) {$-t$};
\end{tikzpicture} \end{center} 

On the left, we see the orientation of the link $L$ at a crossing, and on the right, we see how the edges of $\mathcal{G}(L)$ incident to the corresponding vertex are directed and weighted. Denote by $\overrightarrow{\mathcal{G}(L)}$ the oriented weighted graph obtained from $\mathcal{G}(L)$ in this fashion. Note that the edge orientations assigned to $\overrightarrow{\mathcal{G} (L)}$ are not the same as the orientation of the corresponding components of the link.  Let $\var(e)$ denote the weight $-t$ or $1$ assigned to the edge $e \in \Edg(\overrightarrow{\mathcal{G}(L)})$. See Figure \ref{fig:bipandlink1} for a full example.

Recall that an \text{arborescence} of $\overrightarrow{\mathcal{G}(L)}$ rooted at $v\in\Ver(\overrightarrow{\mathcal{G}(L)})$ is a directed subgraph $T$ such that for each other vertex $u$, there is a unique directed path in $T$ from $v$ to $u$. We will denote by  $\mathcal{A}_v(\overrightarrow{\mathcal{G}(L)})$ the set of arborescences of $\overrightarrow{\mathcal{G}(L)}$ rooted at $v$.  

\begin{notation}\label{not:crow_weight}
    
For every $A \in \mathcal{A}_v(\overrightarrow{\mathcal{G}(L)})$, we define the \textbf{Crowell weight} to be
\[\wt_C(A)=\prod_{e\in E (A)} \var(e).\]

\end{notation}

\begin{figure}
\centering
\includegraphics[height=5.5cm]{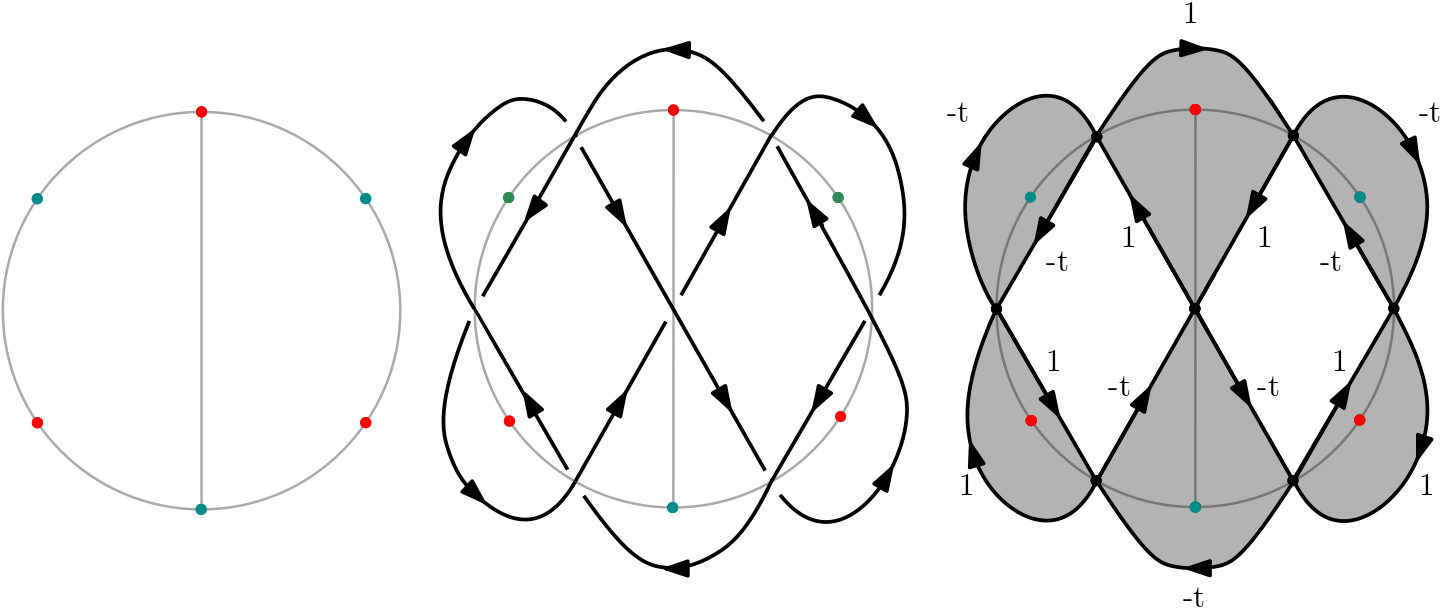}
\caption{A planar bipartite graph $G$ along with  its associated positive special alternating link $L$ and Crowell's  edge weighted digraph $\protect\overrightarrow{\mathcal{G}(L)}.$}
\label{fig:bipandlink1}
\end{figure}

\begin{theorem}[\cite{Crowell} Theorem 2.12]
    \label{Thm:CrowellModel} Given an alternating diagram of the link $L$, fix an arbitrary vertex $v \in \Ver\overrightarrow{\mathcal{G}(L)})$. The Alexander polynomial of $L$ is: 
  
    \[\Delta_L(t)\sim\sum_{\substack{A\in \mathcal{A}_v(\overrightarrow{\mathcal{G}(L)})}} \wt_C(A). \]
\end{theorem}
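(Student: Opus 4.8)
The plan is to deduce Crowell's formula from a determinantal presentation of $\Delta_L(t)$ read off the alternating diagram, and then to convert that determinant into a sum over arborescences by the weighted directed Matrix--Tree Theorem. Concretely, I would start from the abelianized Fox (Alexander) matrix of a Wirtinger-type presentation associated to the diagram of $L$: rows indexed by the crossings, columns by the arcs, each entry obtained by Fox-differentiating the relator at a crossing and sending every meridian to $t$. The classical input here is that this matrix has corank one and that, after deleting the row and the column corresponding to the chosen root vertex $v$, its determinant equals $\Delta_L(t)$ up to multiplication by $\pm t^{k}$.

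The crux is to recognize this reduced matrix, for an \emph{alternating} diagram, as the reduced weighted Laplacian $\overline{L}_v$ of the oriented weighted graph $\overrightarrow{\mathcal{G}(L)}$, with the edge orientations and the weights $var(e)\in\{1,-t\}$ exactly those prescribed in the statement. This identification is verified crossing by crossing: at each crossing the local block of Fox derivatives records the three incident arcs with entries of the shape $1$, $\pm t^{\pm1}$ and $\pm(t-1)$, and — after rescaling rows and columns by units $\pm t^{k}$ — one matches these with the local contributions of a weighted Laplacian whose off-diagonal entries are the negated edge weights $-var(e)$. The essential point is that the alternating pattern is precisely what permits a globally coherent choice of these signs and normalizations, so that the resulting matrix genuinely has vanishing row sums (i.e. is a Laplacian) and the deleted row is therefore redundant; for a non-alternating diagram no such consistent choice exists, which is exactly why the theorem is special to alternating links.

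Given this identification, the weighted directed Matrix--Tree Theorem — in the form counting spanning trees oriented away from the root — yields $\det \overline{L}_v = \sum_{A\in\mathcal{A}_v(\overrightarrow{\mathcal{G}(L)})}\prod_{e\in E(A)}var(e)$, and chaining this with the determinantal presentation of $\Delta_L(t)$ (all identities up to $\pm t^{k}$) gives Crowell's formula; independence of the right-hand side from the choice of $v$ is then automatic, since the left-hand side does not depend on any of the choices made. I expect the main obstacle to be exactly this middle step: carefully matching Crowell's edge-direction-and-weight rule to the Fox derivative of each relator, and checking that the alternating hypothesis forces the local unit normalizations to glue into a single honest weighted Laplacian rather than merely a matrix with the correct determinant. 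A secondary, purely bookkeeping issue is tracking the accumulated $\pm t^{k}$ factors (and, for links, making sure no extra factor of $t-1$ sneaks in) so that the final comparison is genuinely an equality up to units.
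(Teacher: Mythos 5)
This statement is quoted from Crowell (his Theorem 2.12); the paper itself gives no proof, so there is nothing internal to compare against. Your proposal is, in outline, exactly Crowell's original argument: pass from the Wirtinger presentation to the abelianized Fox Jacobian, identify it (for an alternating diagram) with a Kirchhoff-type matrix of the weighted digraph $\overrightarrow{\mathcal{G}(L)}$, and apply the directed matrix--tree theorem (Bott--Mayberry/Tutte) to the reduced matrix; chaining with the classical fact that any $(n-1)\times(n-1)$ minor of the Jacobian is $\Delta_L(t)$ up to units gives the formula, and root-independence is automatic. So the route is sound.

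Two points in your middle step should be made precise, and one of them is slightly misattributed. First, the reason the Jacobian can be reindexed so that both rows and columns are labelled by crossings is the combinatorial fact that in an alternating diagram each arc passes over exactly one crossing; the resulting bijection (arc $\mapsto$ the crossing it overpasses) is what makes ``delete the row and column of $v$'' meaningful and identifies the matrix with the Kirchhoff matrix of $\overrightarrow{\mathcal{G}(L)}$: at a positive crossing the row reads $1$ (outgoing under-arc), $-t$ (incoming under-arc), $t-1$ (over-arc, which sits in the diagonal position under the bijection), and each off-diagonal entry corresponds to one edge of the universe, directed toward the endpoint where it passes under, with weight $var(e)$ -- precisely the rule in the statement. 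Second, the vanishing of row sums is \emph{not} where alternation enters: it holds for any diagram, being the abelianized fundamental identity of Fox calculus (negative crossings give rows $1,\,-t^{-1},\,t^{-1}-1$, fixed by multiplying the row by the unit $-t$, which only affects $\Delta_L$ up to units). What genuinely uses alternation is the arc--crossing bijection above; without it the columns do not match the crossings and the matrix is not the Kirchhoff matrix of the universe digraph. Finally, when you invoke the matrix--tree theorem, take the version whose $(v,v)$-cofactor counts arborescences oriented away from the root, so that it matches the convention for $\mathcal{A}_v(\overrightarrow{\mathcal{G}(L)})$ used here (unique directed path from $v$ to each vertex); the stray sign $(-1)^{n-1}$ from the row scalings and from negating the Laplacian is absorbed by $\sim$.
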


%\begin{remark} \label{rem:C} Theorem \ref{Thm:CrowellModel} shows that in the case of alternating links $L$, the Alexander polynomial $\Delta_{L}(t)$ can be calculated by collecting terms $\prod_{e \in E(A)} var(e)$ for each arborescence $A$ of $\overrightarrow{\mathcal{G}(L)}$ (with specified arbitrary root $v$). We will later use the notation $\text{wt}_C(A)=\prod_{e\in\Edg(A)} var(e)$ where the subscript $C$ indicates that we are dealing with the Crowell weight of an arborescence.
%\end{remark}

\subsection{Definition of our weight-preserving bijection}
The goal of this section is to define a bijection between arborescences of $\overrightarrow{\mathcal{G}(L)}$ and spanning trees of $D$ that is weight-preserving up to multiplication by a fixed monomial. For fixed $v\in\Ver(\overrightarrow{\mathcal{G}(L)})$ and $A\in\mathcal{A}_v(\overrightarrow{\mathcal{G}(L)})$, recall that the \textit{Crowell weight} of an arborescence $A$ is the monomial $\wt_C(A)=\prod_{e\in\Edg(A)} \var(e)$. We define the \textit{Kauffman weight} of a spanning tree of $D$ rooted at $r\in\Ver(D)$ as follows.

\begin{notation}\label{not:kauff_weight}

For fixed $r\in\Ver(D)$ and for $T\in\mathcal{T}(D)$, we will write $\wt_K(T)=t^{|\Edg(T)\setminus \Edg_{\text{away}}(T)|}$ for the \textbf{Kauffman weight}, as explained in Remark \ref{rem:K}, where $\Edg_{\text{away}}(T)$ is the subset of edges of $T$ such that reversing the orientation of every edge in $\Edg_{\text{away}}(T)$ would make $T$ an oriented spanning tree rooted at $r$. 

%Specifically, we leverage Lemma \ref{SpanningTreesLemma} and omit the contribution of the spanning tree of $G$ to each monomial.

\end{notation}

Fix $v\in\Ver(\overrightarrow{\mathcal{G}(L)})$ in the boundary of the exterior region of $\overrightarrow{\mathcal{G}(L)}$, and let $f_v\in\Edg(\overrightarrow{\mathcal{G}(L)})$ be the unique edge in the boundary of the exterior region with final vertex $v$. Let $\mathcal{C}(\overrightarrow{\mathcal{G}(L)})$ denote the set of Eulerian tours of $\overrightarrow{\mathcal{G}(L)}$ considered as paths beginning with $f_v$.  Let $\mathcal{T}(D)$ denote the set of spanning trees of $D$. Recall that $\mathcal{G}(L)$ is the medial graph of $G$ (see Section \ref{sec:SpecialAltDef}).  Since $D$ (considered without its orientation) is the planar dual of $G$, $\overrightarrow{\mathcal{G}(L)}$ can equivalently be constructed by assigning edge weights and orientations to the medial graph of $D$.  In this section, we will write $v_e$ to mean the vertex of $\overrightarrow{\mathcal{G}(L)}$ corresponding to an edge $e\in \Edg(D)$, as in this medial graph construction. 

\begin{definition} \label{def:bitransition}
    Let $\Phi:\mathcal{T}(D)\rightarrow \mathcal{C}(\overrightarrow{\mathcal{G}(L)})$ be the map which, given $T\in \mathcal{T}(D)$, produces $\Phi(T)$ by the following procedure. At each vertex $v_e$ of $\overrightarrow{\mathcal{G}(L)}$, we choose a pairing of incident edges as shown below, depending on whether or not $e$ is in $\Edg(T)$.
    \vspace{.5cm}
    
    \includegraphics[height=3.5cm]{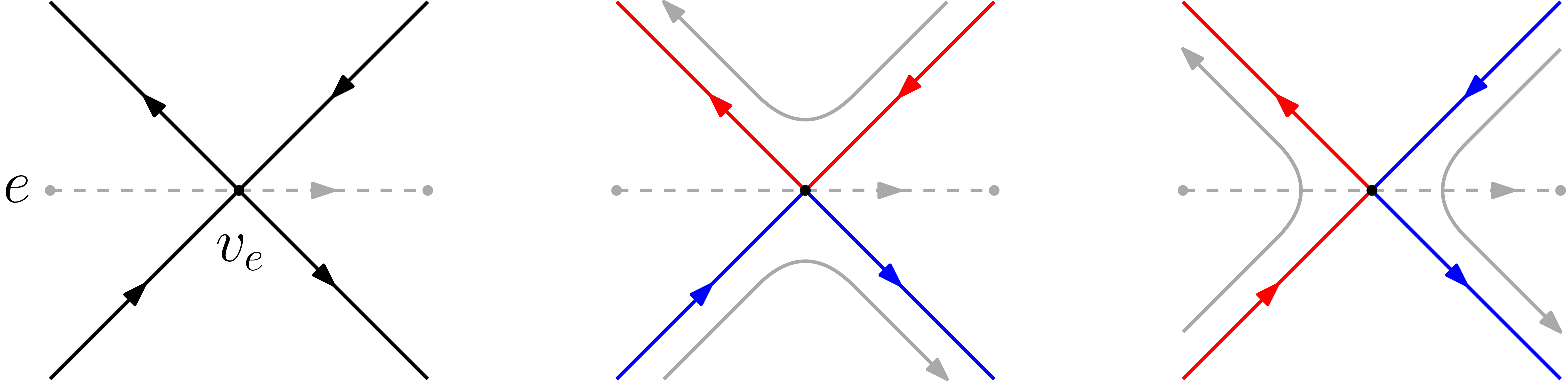}
    
    %\ecom{I think the edge directions may be backwards in this figure.}
    
    \vspace{.5cm}
    On the left, we show an edge $e \in \Edg(D)$ meeting the vertex $v_e$ in $\overrightarrow{\mathcal{G}(L)}$.  The center diagram shows the choice of edge pairing in $\overrightarrow{\mathcal{G}(L)}$ when $e\in \Edg(T)$, and the right diagram shows the choice of edge pairing in $\overrightarrow{\mathcal{G}(L)}$ when $e\notin \Edg(T)$.
\end{definition}

Figure \ref{fig:Phi_map} depicts an example of a spanning tree and its image under $\Phi$.  %We will make use of the following observation about Eulerian tours of the form $\Phi(T)$.

\begin{definition} \label{def2}
    Let $\Psi_{v, f_v}:C(\overrightarrow{\mathcal{G}(L)})\rightarrow \mathcal{A}_v(\overrightarrow{\mathcal{G}(L)})$ be the following map. Given an Eulerian tour $C$ of $\overrightarrow{\mathcal{G}(L)}$, $\Psi_{v,f_v}(C)$ is the subgraph of $\overrightarrow{\mathcal{G}(L)}$ formed by tracing $C$, starting at the edge $f_v$, and at each $u\in\Ver(\overrightarrow{\mathcal{G}(L)})\setminus{v}$, selecting the first edge incoming to $u$.
\end{definition}

An eloquent exposition of the map from Definition \ref{def2} can be found in Stanley's account of the BEST Theorem \cite{stanley2013algebraic}. Recall that  $\overrightarrow{\mathcal{G}(L)}^T$ denotes the \text{transpose} of $\overrightarrow{\mathcal{G}(L)}$, i.e. the digraph obtained by reversing the direction of each edge in $\overrightarrow{\mathcal{G}(L)}$. Applying \cite[Theorem 10.2]{stanley2013algebraic}  to $\overrightarrow{\mathcal{G}(L)}^T$ with the additional condition that every vertex of $\overrightarrow{\mathcal{G}(L)}$ has outdegree $2$, we readily obtain that  $\Psi_{v, f_v}$ is a bijection. Refer back to Section \ref{sec:k} for more details on the BEST theorem. 

The map  $F=\Psi_{v,f_v}\circ\Phi$ is a bijection between spanning trees of $D$ and arborescences of $\overrightarrow{\mathcal{G}(L)}$. In fact, Theorem \ref{maintheorem} says a lot more: $F$ is weight-preserving (up to multiplication by a fixed monomial which depends only on graph-theoretic properties of $D$) when the spanning trees of $D$ and arborescences of $\overrightarrow{\mathcal{G}(L)}$ are weighted as in Notation \ref{not:kauff_weight} and Notation \ref{not:crow_weight}, respectively. 

We set the following convention to make sense of the notation $m_2$ used in the statement of Theorem \ref{maintheorem}.  Refer to the clockwise oriented regions of $\overrightarrow{\mathcal{G}(L)}$ by $R(\overrightarrow{\mathcal{G}(L)})$. It was shown in \cite[Lemma 3.1]{hafner2023logconcavity} that for each of these regions, the edges are either labeled exclusively by $-t$ or exclusively by $1$  in Crowell's model. Moreover, for any pair of adjacent vertices in $\Ver(G)$, one of the corresponding regions must be labeled with $-t$ and the other with $1$.  We can thus sort the elements of $R(\overrightarrow{\mathcal{G}(L)})$ into color classes $\{R_1,\dots,R_{m_1}\}\sqcup\{R_{\overline{1}},\dots, R_{\overline{m_2}}\}$ where $\{R_1,\dots,R_{m_1}\}$ are the regions whose boundary edges are labeled by $-t$ in Crowell's model.  We refer to $\{R_1,\dots,R_{m_1}\}$ and $\{R_{\overline{1}},\dots, R_{\overline{m_2}}\}$ as \textbf{gold} and \textbf{blue} regions respectively. 

\begin{remark}
    Let $G$ be a planar bipartite graph. Comparing the definitions of gold and blue with Figure \ref{fig:crossings}, a region in $R(\overrightarrow{\mathcal{G}(L_G)})$ is gold if and only if the corresponding region of $L_G$ has clockwise oriented boundary. 
\end{remark}

\begin{figure}
\centering
\includegraphics[width=13cm]{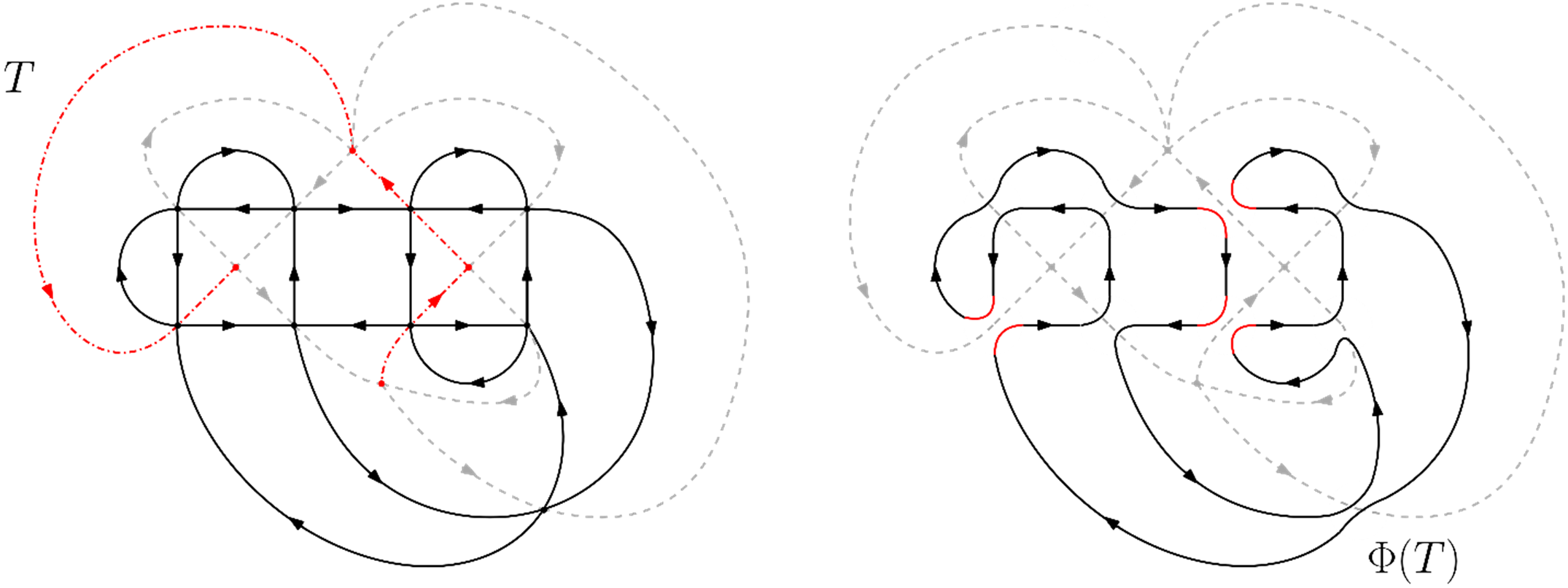}

\caption{In the above image, $D$ is depicted in dashed grey, and $\protect\overrightarrow{\mathcal{G}(L)}$ is depicted in black. On the left is a spanning tree $T$ of $D$. On the right is its image under $\Phi$.}
\label{fig:Phi_map}
\end{figure}

\begin{theorem}
\label{maintheorem}
Let $r\in\Ver(D)$ denote the vertex corresponding to the exterior region of $L$. Fix $e_0 \in \Edg(D)$ with $\fin(e_0)=r$. Let $v=v_{e_0} \in \Ver(\overrightarrow{\mathcal{G}(L)})$, and let $f_v\in\Edg(\overrightarrow{\mathcal{G}(L)})$ be the unique edge in the boundary of the exterior region such that $\fin (f_v) = v$.

Then $F=\Psi_{v,f_v}\circ\Phi$ is a bijection between spanning trees of $D$ and arborescences of $\overrightarrow{\mathcal{G}(L)}$. Moreover, ${\rm wt}_C(A)={\rm wt}_K(T)t^{m_2-1}$ where $T \in \mathcal{T}(D)$, $A  = F(T) \in  \mathcal{A}_v(\overrightarrow{\mathcal{G}(L)})$, and $m_2$ is the number of blue regions, as defined above. \end{theorem}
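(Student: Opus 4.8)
The plan is to verify the two assertions separately: first that $F = \Psi_{v,f_v}\circ\Phi$ is a bijection, then that it transforms Kauffman weights into Crowell weights up to the monomial factor $t^{m_2-1}$. The bijectivity of $F$ is almost immediate from the tools already assembled: $\Psi_{v,f_v}$ is a bijection between Eulerian tours of $\overrightarrow{\mathcal{G}(L)}$ and arborescences rooted at $v$ by the cited application of Stanley's account of the BEST theorem (every vertex of $\overrightarrow{\mathcal{G}(L)}$ has outdegree $2$, so the ``first incoming edge'' selection is well-defined). So the real content is that $\Phi$ is a bijection between spanning trees of $D$ and Eulerian tours of $\overrightarrow{\mathcal{G}(L)}$. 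This is a known fact in the folklore surrounding medial graphs (transition systems / ``$A$-trails''): pairing up the four edges at each medial vertex $v_e$ according to whether $e\in E(T)$ defines a transition system, and the claim is that this transition system yields a single closed tour precisely when $T$ is a spanning tree of $D$. I would prove this directly by an Euler-characteristic / face-tracing argument: the pairing at $v_e$ either ``follows'' the edge $e$ of $D$ (when $e \in E(T)$) or ``follows'' the dual edge in $G$ (when $e\notin E(T)$, equivalently the dual edge is in the complementary spanning tree of $G$), and a standard argument shows the number of cycles in the resulting $2$-factor of the medial graph equals $1 + (\text{number of connected components of } T) - 1 + (\text{something symmetric})$; forcing exactly one cycle is equivalent to $T$ being a spanning tree. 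I would cite the relevant literature (e.g. the correspondence between spanning trees and ``medial tours'' goes back to work on the Tutte polynomial and matrix-tree-type arguments) and give the short face-counting proof for completeness.

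The weight-preservation statement is where the two state models actually get matched, and it is the step I expect to be the main obstacle. The approach is to track, edge by edge, what an edge $e \in E(D)$ contributes on each side. On the Kauffman side, by Remark~\ref{rem:K}, $\mathrm{wt}_K(T) = t^{|E(T)\setminus E_{\text{away}}(T)|}$: an edge $e\in E(T)$ contributes a factor $t$ iff, after rooting $T$ at $r$, the orientation of $e$ agrees with its orientation in $D$. On the Crowell side, $\mathrm{wt}_C(A) = \prod_{e'\in E(A)} var(e')$ where $var(e')\in\{-t,1\}$, and by the discussion preceding the theorem (from \cite[Lemma 3.1]{hafner2023logconcavity}) the label $var(e')$ is determined by whether $e'$ lies on the boundary of a gold region (label $-t$) or a blue region (label $1$) of $\overrightarrow{\mathcal{G}(L)}$. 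So I would:
\begin{itemize}
\item[(i)] identify, via the medial-graph construction, which edge $f_{e}$ of the arborescence $A = F(T)$ ``corresponds'' to each edge $e$ of $D$ — concretely, when $e\in E(T)$, the $\Phi$-pairing at $v_e$ makes the Eulerian tour traverse $v_e$ along the two edges of $\overrightarrow{\mathcal{G}(L)}$ that ``cross over'' in the direction of $e$, and $\Psi_{v,f_v}$ then selects at most one incoming such edge into $A$;
\item[(ii)] show that the total number of edges of $A$ carrying label $-t$ equals $m_1 - 1 + |E(T)\setminus E_{\text{away}}(T)|$ (or the analogous bookkeeping), using Lemma~\ref{SpanningTreesLemma} applied to the complementary spanning tree of $G$ to pin down the ``constant'' part as $m_1$ or $m_2$ depending on where the root lies;
\item[(iii)] combine: $\mathrm{wt}_C(A) = (-t)^{(\text{number of } -t\text{-edges in }A)}\cdot 1^{(\text{rest})}$, and reconcile signs and the exponent to get $\mathrm{wt}_C(A) = \mathrm{wt}_K(T)\, t^{m_2-1}$.
\end{itemize}

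The delicate points, and what I'd budget the most care for, are: (a) matching the sign conventions — Crowell's weights are $\pm t$ while Kauffman's are manifestly positive in the special alternating case (the proof of Theorem~\ref{thm2} showed ``no cancellation occurs''), so I must check that the product of the $-1$'s coming from the $-t$-labeled edges of $A$ always works out to $+1$, which should follow because the number of gold-boundary edges in any arborescence is the fixed quantity controlled by Lemma~\ref{SpanningTreesLemma} and has fixed parity; (b) correctly identifying the exponent shift as exactly $m_2 - 1$ and not $m_1-1$ or $m_1$, which requires being careful about the root choices $r$, $v$, $f_v$ specified in the theorem statement (these force the relevant spanning tree of $G$ to be rooted at a vertex lying in a particular color class, pinning the Lemma~\ref{SpanningTreesLemma} constant to $m_2 - 1$ rather than $m_2$); and (c) verifying that the edge $f_v$ chosen on the boundary of the exterior region is indeed a legitimate starting edge so that $\Psi_{v,f_v}$ is defined and that the resulting arborescence is rooted at $v = v_e$ with $\fin(e) = r$. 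I would organize the proof so that the combinatorial heart — the edge-by-edge correspondence between $E(T)\setminus E_{\text{away}}(T)$ and the $t$-contributing edges of $A$ — is isolated as the key lemma, and the rest is the (routine but fiddly) constant-counting via Lemma~\ref{SpanningTreesLemma}.
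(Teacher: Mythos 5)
Your plan follows the same overall route as the paper (bijectivity of $F$ via the BEST theorem for $\Psi_{v,f_v}$ plus the tree/tour transition correspondence for $\Phi$; weight preservation by classifying the edges of $A=F(T)$ according to the color of the region they border and splitting the count into a $T$-dependent part and a constant), but the two places where the actual work happens are left as assertions, so there is a genuine gap. First, your proposed ``key lemma'' --- that the $t$-contributing edges of $T$, i.e. $E(T)\setminus E_{\text{away}}(T)$, match up with the $-t$-labelled edges of $A$ --- is precisely the content of the paper's Lemma \ref{lem:FrogJumping}: the unique incoming edge of $A$ at $v_e$, for $e\in E(T)$, borders a gold region if and only if $e$ points towards $r$. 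This is not routine bookkeeping: its proof traces the Eulerian tour $\Phi(T)$ along region boundaries, uses that $T$ is a tree (so that after leaving the exterior boundary at some $v_{e'}$ the tour must return to $v_{e'}$ before meeting any other tree vertex on that boundary), and then a parity argument along the boundary path between consecutive tree vertices around a vertex of $D$. Second, the constant: you propose to pin it down via Lemma \ref{SpanningTreesLemma} applied to the complementary tree in $G$, but you give no argument determining the color of the incoming arborescence edge at $v_e$ when $e\notin E(T)$; in the paper this is handled by Lemma \ref{lem:violets}, which counts, for each blue region, exactly which of its boundary edges lie in $A$, and whose Case 3 requires a genuine planarity (Jordan-curve) argument showing $\Phi(T)$ cannot re-enter the region enclosed by a portion of the tour. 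Only after that count, together with the observation that blue edges of $\overrightarrow{\mathcal{G}(L)}$ biject with its vertices via $f\mapsto\init(f)$, does the constant come out as $m_2-1$ rather than $m_1-1$ or $m_2$; your sketch explicitly leaves this undecided.

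There is also a factual slip in your sign discussion (a): you claim the number of gold-boundary edges in any arborescence is ``the fixed quantity controlled by Lemma \ref{SpanningTreesLemma} and has fixed parity.'' That number is exactly the degree $w$ of $\mathrm{wt}_C(A)$, which varies with $A$ (it equals $m_2-1+|E(T)\setminus E_{\text{away}}(T)|$, and it must vary, since these degrees produce the different powers of $t$ in the Alexander polynomial); Lemma \ref{SpanningTreesLemma} controls a different statistic, namely $A$-to-$B$ edges in oriented spanning trees of the checkerboard graph $G$, not a statistic of arborescences of $\overrightarrow{\mathcal{G}(L)}$. (The paper itself sidesteps signs by computing only the degree of $\mathrm{wt}_C(F(T))$, so the sign issue is not where you lose points --- but your proposed resolution of it does not work as stated.) In summary: the bijectivity half is fine and matches the paper, but the weight-preservation half defers exactly the two combinatorial lemmas (the analogues of Lemmas \ref{lem:FrogJumping} and \ref{lem:violets}) that constitute the proof, so as written the main claim is not established.
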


Figure \ref{fig:f_map} depicts an instance of the bijection $F$. 

\begin{figure}
\centering
\includegraphics[width=13cm]{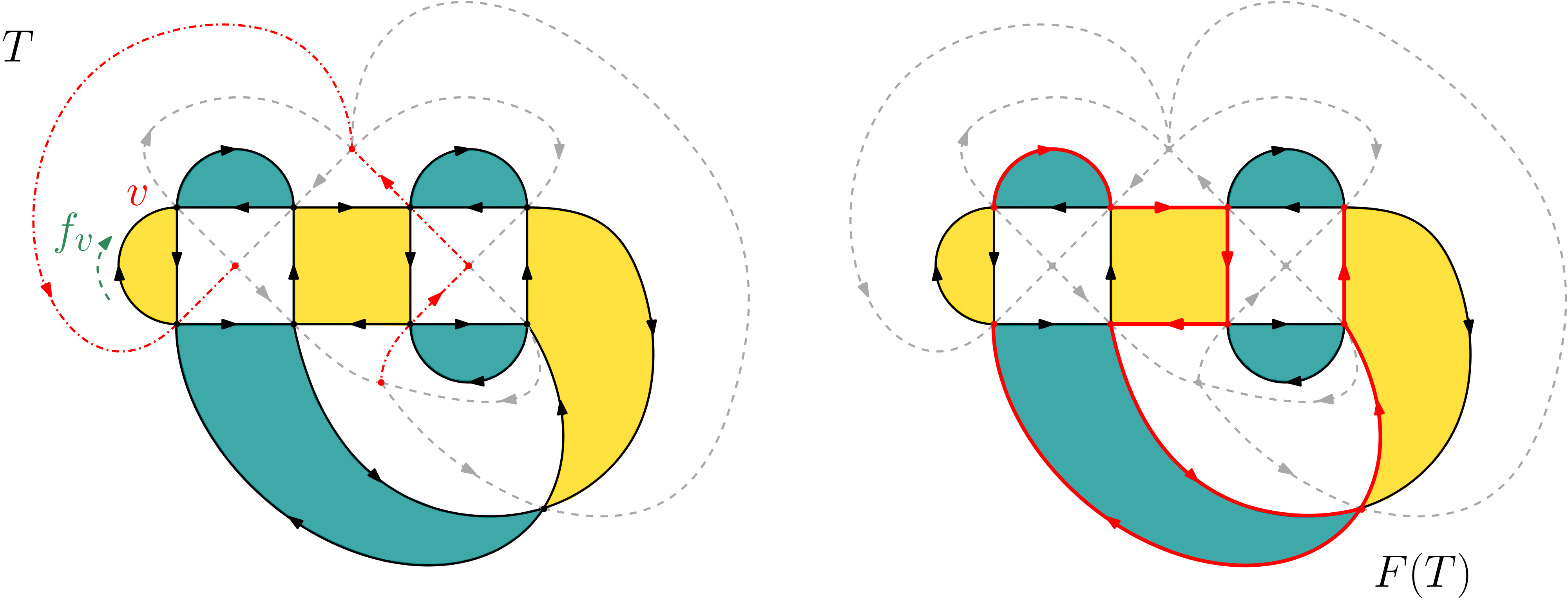}
\caption{A spanning tree of $D$ and its image under $F$.} %\ecom{add labels for $v$ and $f_v$ in this figure since computing $F$ uses them}}
\label{fig:f_map}
\end{figure}

\subsection{Proof of Theorem \ref{maintheorem}}
To prove Theorem \ref{maintheorem}, we introduce four lemmas and accompanying notation.

\begin{figure}
\centering
\includegraphics[width=13cm]{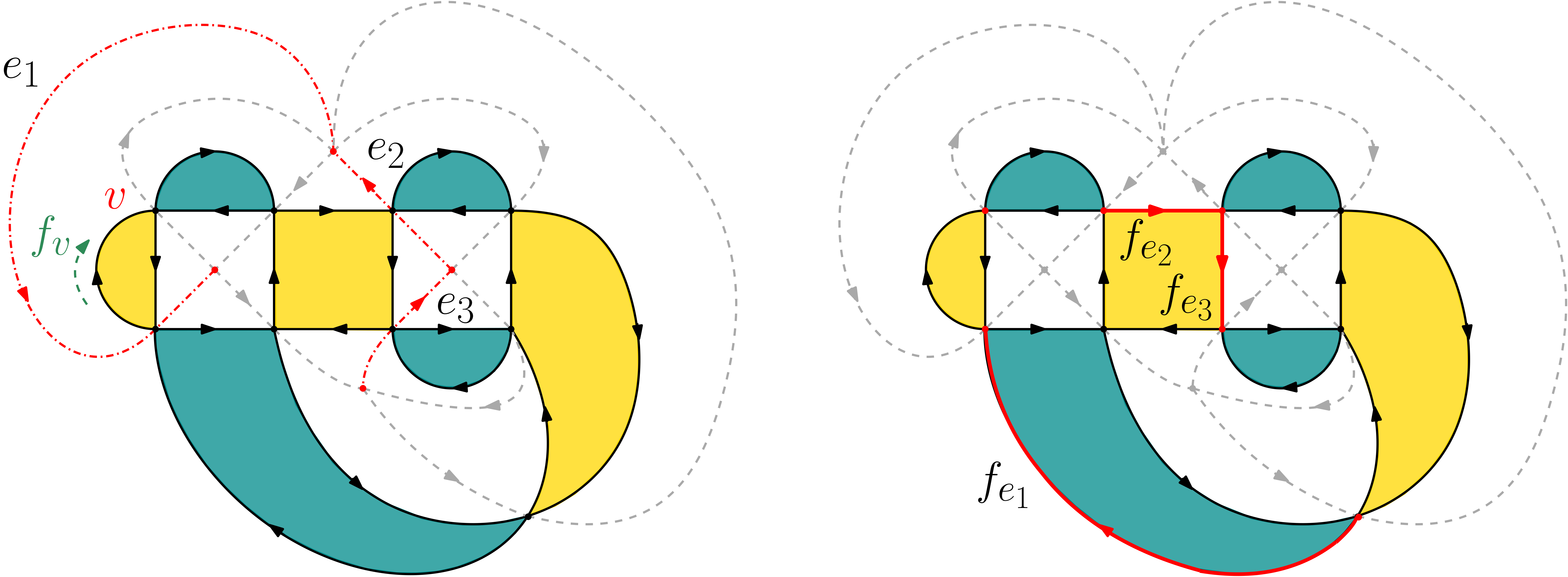}
\caption{A spanning tree of $D$ and for each $e_i\in\Edg(T)$, the associated edge $f_{e_i}\in\Edg(F(T))$.} %\ecom{perhaps add labels for $v$ and $f_v$ in this figure as well?}}
\label{fig:tree_and_edges}
\end{figure}

\begin{definition} \label{def:fe}
    Fix $T\in \mathcal{T}(D)$. For $e \in \Edg(T)$ such that $v_e \neq v$, the root of $F(T)$, let $f_e\in\Edg(F(T))$ denote the unique edge of the arborescence $F(T)$ with final vertex $v_e$ (see Figure \ref{fig:tree_and_edges}).  In particular, $f_e$ is the first edge incoming to $v_e$ in $\Phi(T)$.
\end{definition}

\begin{lemma}
    \label{lem:whathappenstothepaths}
    Fix $T \in \mathcal{T}(D)$, let $w\in\Ver(D)$, and let $e_1 \in \Edg(T)$ be incident to $w$.  Let $f=f_{e_1}$ in $\Phi(T)$ and let $g$ be the other edge of $\overrightarrow{\mathcal{G}(L)}$ with final vertex $v_{e_1}$.  If $f$ is in the boundary of the region of $\overrightarrow{\mathcal{G}(L)}$ corresponding to $w$, then for any $e_2 \in \Edg(T)$ with vertex $w$ such that $e_2 \neq e_1$, the portion of $\Phi(T)$ between the edges $f$ and $g$ does not meet $v_{e_2}$.
\end{lemma}
\begin{proof}

    Observe that if $e',e'' \in \Edg(T)$ are such that $\Phi(T)$ reaches $v_{e'}$ before $v_{e''}$ and no other vertex in the path between $v_{e'}$ and $v_{e''}$ in $\Phi(T)$ has the form $v_{e'''}$ for any $e''' \in \Edg(T)$, then $e'$ and $e''$ share an endpoint in $D$.  In particular, $e'$ and $e''$ are either adjacent or equal.
    
    Let $v_{e_1}, v_{e'_1}\ldots, v_{e'_m}, v_{e_1}$ be the set of all vertices of the form $v_{e'}$ for $e' \in \Edg(T)$ which are reached by $\Phi(T)$ in between the edges $f$ and $g$. Suppose (in search of a contradiction) that $v_{e'_k}$ has $w$ as an endpoint for some $1\leq k \leq m$.  By the observation above, we can see that $e_1, e'_1, \ldots, e'_k$ is a path from $w$ back to $w$ within $T$, and since $e_1 \neq e'_k$, this path must contain a cycle.  This contradicts the fact that $T$ is a tree.  We thus conclude that the lemma holds. 
\end{proof}

Given a spanning tree $T$ of $D$, understanding the weight of $F(T)$ requires one to know precisely how many edges of $F(T)$ border blue regions of $\overrightarrow{\mathcal{G}}(L)$. Lemma \ref{lem:FrogJumping} concerns the edges of the form $f_e$ for some $e\in \Edg(T)$, and Lemma \ref{lem:violetsnew} concerns the remaining edges. 

\begin{lemma}
\label{lem:FrogJumping}
  Fix a spanning tree $T$ of $D$, and let $r\in\Ver(D)$ denote the vertex corresponding to the exterior region of $\overrightarrow{\mathcal{G}(L)}$.  Fix $e_0 \in \Edg(D)$ with $\fin(e_0)=r$. Let $v=v_{e_0} \in \Ver(\overrightarrow{\mathcal{G}(L)})$, and let $f_v\in\Edg(\overrightarrow{\mathcal{G}(L)})$ be the unique edge in the boundary of the exterior region such that $\fin (f_v) = v$. 
  
  %Then the edges in $\{f_e \ | \ e\in\Edg(T) \ \text{points towards} \ r  \text{ in $T$} \}$ all border gold regions of $\overrightarrow{\mathcal{G}(L)}$, and the edges in $\{f_e \ | \ e\in\Edg(T) \ \text{points away from} \ r \text{ in $T$}  \}$ all border blue regions of $\overrightarrow{\mathcal{G}(L)}$.

  For $e\in\Edg(T)$, $f_e$ is blue if and only if $e\in\Edg_{\text{away}}(T)$. 
  
  %\acom{I caught something that I think is missing. May need ``For any $e\in\Edg(T)$, $f_e$ is not the last edge reached...'' statement. Perhaps a separate lemma?}  \ecom{Added a lemma for this -- might edit later...}\acom{That does it!}

\end{lemma}

\begin{proof}
    The lemma statement is equivalent to saying the edges in $\{f_e \ | \ e\in\Edg(T) \ \text{points towards} \ r  \text{ in $T$} \}$ all border gold regions of $\overrightarrow{\mathcal{G}(L)}$, and the edges in $\{f_e \ | \ e\in\Edg(T) \ \text{points away from} \ r \text{ in $T$}  \}$ all border blue regions of $\overrightarrow{\mathcal{G}(L)}$. For ease of reference, we say an edge of $\overrightarrow{\mathcal{G}(L)}$ is \textit{gold} (resp. \textit{blue}) if it borders a gold (resp. blue) region. 
    
    Fix a spanning tree $T$ of $D$.  Recall that for every $e\in\Edg(T)$, Definitions \ref{def2} and \ref{def:fe} state that $f_e$ is the first incoming edge to $v_e$ in $\Phi(T)$, where we treat $\Phi(T)$ as beginning with the edge $f_v$.  We outline the following procedure to determine the color of $f_e$ for each $e\in\Edg(T)$. 

    \begin{itemize} 
    \item[]Case 1: The edge $e$ of $T$ has $r$ as an endpoint.
    
    By definition, this is equivalent to the case that $v_e$ is in the boundary of the exterior region of $\overrightarrow{\mathcal{G}(L)}$.  Let $P$ denote the portion of the oriented boundary of the exterior region of $\overrightarrow{\mathcal{G}(L)}$ from $v$ to $v_e$ (see the left and center images of Figure \ref{fig:frogjump}). We claim that the last edge of $P$ is the first edge incoming to $v_e$ along $\Phi(T)$ (which is $f_e$). Note that $\Phi(T)$, starting at $f_v$,  can only leave the boundary of the exterior region when it meets a vertex of the form $v_{e'}$ for some $e' \in \Edg(T)$.  Note also that upon leaving the boundary of the exterior region from a vertex of the form $v_{e'}$ for some $e' \in \Edg(T)$, Lemma \ref{lem:whathappenstothepaths} states that the Eulerian cycle $\Phi(T)$ will return to $v_{e'}$ before meeting any other vertex $v_{e''}$, $e'' \in \Edg(T)$,  in the boundary of the exterior region.
    
    Accordingly, the first edge incident to $v_e$ along $\Phi(T)$ is blue if and only if the last edge of $P$ is blue. This happens if and only if $e$ is directed away from $r$ since the edges incident to $r$ alternate between incoming and outgoing. The left and center images in Figure \ref{fig:frogjump} show examples of this case.

    \begin{figure}
    \centering
    \includegraphics[width=16cm]{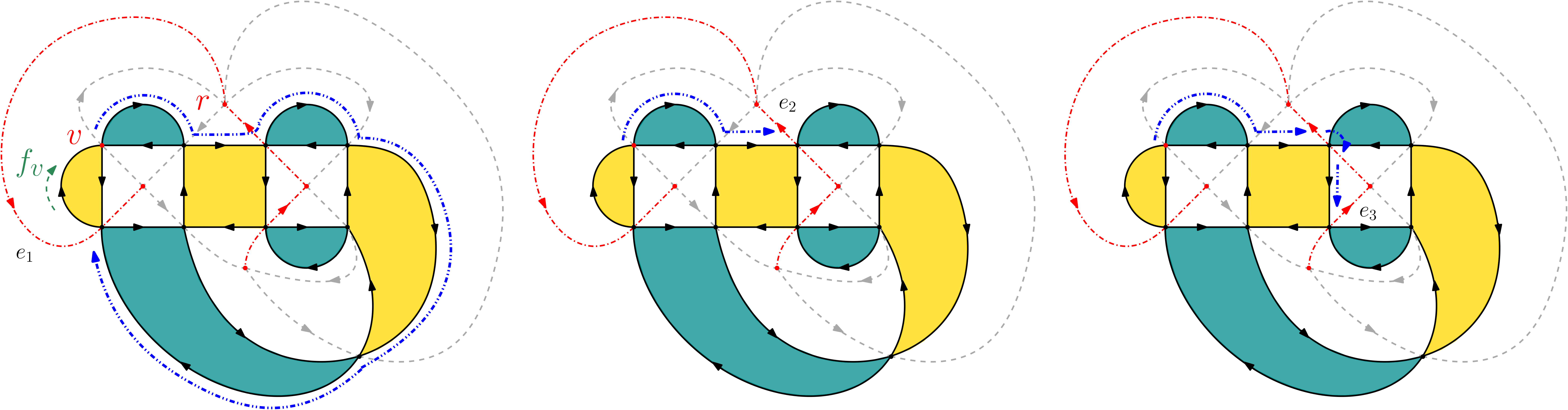}
    \caption{The procedure described in the proof of Lemma \protect\ref{lem:FrogJumping} to determine the first edge incident to $v_{e_i}$ for each of the edges of the spanning tree from Figures \protect\ref{fig:f_map} and \protect\ref{fig:tree_and_edges}.  In the left and center images, the path $P$ is indicated in blue.  In the right image, the blue line indicates the path $P'$ from $v_{e_2}$ to $v_{e_3}$ appended to the path $P$ from $v$ to $v_{e_2}$.} %\ecom{Maybe add label to $e_1$ in first image for consistency.  Also, might be clearer if we specifically distinguish $P'$ in the third image -- perhaps leave $P'$ blue and make the initial portion of the path from $v$ to $v_{e_2}$ some other color?}}
    \label{fig:frogjump}
    \end{figure}

  \item[] Case 2: The edge $e\in\Edg(T)$ is not incident to $r$. 
  
  Fix the shortest (undirected) path in $T$ from $r$ to a vertex of $e$, and let $e'\neq e$ denote the final edge of this path. The edges $e$ and $e'$ meet at a single vertex $w$. Let $P'$ denote the oriented path from $v_{e'}$ to $v_e$ in the boundary of the region of $\overrightarrow{\mathcal{G}(L)}$ corresponding to $w$ (in the rightmost image of Figure \ref{fig:frogjump}, $e$ is $e_3$, $e'$ is $e_2$, and $w$ is the vertex where they meet). As in Case 1, Lemma \ref{lem:whathappenstothepaths} implies that the last edge of $P'$ is the first edge incident to  $v_e$ along $\Phi(T)$. If the length of the path connecting $v_{e'}$ and $v_e$ is odd, then since the colors of the edges surrounding any region of $\overrightarrow{\mathcal{G}(L)}$ bounded by a counterclockwise cycle alternate and $D$ is an alternating dimap, the colors of the first edges incident to $v_{e'}$ and $v_e$ are identical, and $e'$ and $e$ either both point towards $v$ or both point away from $v$. Similarly, if the length of the path connecting $v_{e'}$ and $v_e$ is even, then the colors of the first edges incident to $v_{e'}$ and $v_e$ are different, and exactly one of these edges points towards $v$.  We thus achieve the desired result. 
  \end{itemize}
\end{proof}

%\acom{A note: I'll think about how to state this differently, but equivalently. To me, Lemma \ref{lem:FrogJumping} has to do with the edges of $F(T)$ that come from $T$ itself, whereas Lemma \ref{lem:violets} has to do with ``other'' edges required to ``fill out'' the arborescence. Something about $E(F(T))\setminus\{f \ | \ \fin(f)=v_e \textrm{ for some } e\in\Edg(T)\}$.}

\begin{lemma}
\label{lem:violetsnew}
    Fix a spanning tree $T$ of $D$, and let $r\in\Ver(D)$ denote the vertex corresponding to the exterior region of $L$. Fix $e_0 \in \Edg(D)$ with $\fin(e_0)=r$. Let $v=v_{e_0} \in \Ver(\overrightarrow{\mathcal{G}(L)})$, and let $f_v\in\Edg(\overrightarrow{\mathcal{G}(L)})$ be the unique edge in the boundary of the exterior region such that $\fin (f_v) = v$. 

    Let $f$ be a blue edge of $\overrightarrow{\mathcal{G}(L)}$, and let $R$ denote the blue planar region it bounds. Suppose $\fin (f)\neq v_e$ for any $e\in\Edg(T)$. Then $f$ is an edge of $F(T)$ if and only if $f$ is not the last edge in the boundary of $R$ to appear in $\Phi(T)$.  
\end{lemma}

\begin{proof}
First, suppose that $f$ is the last edge in the boundary of $R$ to appear in $\Phi(T)$.  Then $f$ is the first edge pointing into $\fin(f)$ in $\Phi(T)$ only when $\fin(f)=v$, so $f \notin \Edg(F(T))$.

Next, suppose $f$ is not the last edge in the boundary of $R$ reached by $\Phi(T)$. Let $f'$ be the unique edge such that $\init(f')=\fin(f)$. There are two possibilities. 
\begin{enumerate}
    \item Either the edge $f'$ appears before $f$ in $\Phi(T)$, or
    \item the edge $f'$ appears after $f$ in $\Phi(T)$.
\end{enumerate}
We argue that in the first case, $f\in\Edg(F(T))$ and that the second case is impossible.

    \begin{enumerate}

        \item []Case 1: The final vertex $\fin(f)=v_e$ for some edge $e \in \Edg(D)$ which is not in $T$, and the unique edge $f'$ in the boundary of $R$ with $\fin(f)=\init(f')$ appears after $f$ in the Eulerian tour $\Phi(T)$.

        In this case, the only other edge of $\overrightarrow{\mathcal{G}(L)}$ with final vertex $\fin(f)$ must appear immediately before $f'$ in $\Phi(T)$; therefore, $f$ is the first edge pointing into $\fin(f)$ in $\Phi(T)$.  By the definition of $\Psi_{v,f_v}$, this implies $f$ is an edge of $F(T)$.  
        
        \item []Case 2: The final vertex $\fin(f)=v_e$ for some edge $e \in \Edg(D)$ which is not in $T$, and the unique edge $f'$ in the boundary of $R$ with $\fin(f)=\init(f')$ appears before $f$ in the Eulerian tour $\Phi(T)$. 

       Assuming Case 2, note that there must be a path $P$ in $\Phi(T)$ from $f'$ to $f$, as shown below.
        
        \begin{center} \begin{tikzpicture}
        \filldraw[darkcyan] (1,1) circle (1);
        \draw[gray, thick] (1,1) circle (1);
            \draw[-stealth,black, thick] (.25,1.9) -- (0,1.1) ;
            \draw[black,thick] (0,1.1) .. controls (-.25,1.05) .. (-.5,1.15);
            \draw[-stealth,black, thick] (0,.9) -- (.25,.1) ;
              \draw[black,thick] (0,.9) .. controls (-.25,.95) .. (-.5,.85);
            \draw[black,thick, dashed] (.25,1.9) .. controls (1.9,2.5) .. (2.5,1) .. controls (1.9,-.5) .. (.25,.1); 
           \node at (-.1,1.5) {$f$};
           \node at (-.1,.5) {$f'$};
           \node at (1,1) {$R$};
           \node at (2.6,1.5) {P};
        \end{tikzpicture} \end{center}

Suppose that $g\in \Edg(\overrightarrow{\mathcal{G}(L)})$ is an edge not in $P$ such that $\fin(g)\in\Ver(P)$.  Let $g'$ be the edge immediately following $g$ in $\Phi(T)$; in particular, $\init(g')=\fin(g)$.  By construction of $\Phi$, $g$ and $g'$ must either both lie inside the region bounded by $P$ or both lie outside it (see Definition \ref{def:bitransition}).  This implies that once $\Phi(T)$ reaches $f$, it cannot re-enter the portion of the graph enclosed by $P$.  In particular, no edge in the boundary of $R$ can be reached after $f$ in $\Phi(T)$. This contradicts the assumption that $f$ is not the last edge bounding $R$ that is reached by $\Phi(T)$. 
    \end{enumerate}    
\end{proof}

\begin{lemma}
    \label{lem:overlap}
    Fix a spanning tree $T$ of $D$, and let $r\in\Ver(D)$ denote the vertex corresponding to the exterior region of $L$. Fix $e_0 \in \Edg(D)$ with $\fin(e_0)=r$. Let $v=v_{e_0} \in \Ver(\overrightarrow{\mathcal{G}(L)})$, and let $f_v\in\Edg(\overrightarrow{\mathcal{G}(L)})$ be the unique edge in the boundary of the exterior region such that $\fin (f_v) = v$. 

    Let $f \in \Edg (\overrightarrow{\mathcal{G}(L)})$ be in the boundary of a blue region $R$, and suppose that $f$ is the last edge of $R$ to be reached by $\Phi(T)$.  Then $\fin(f) \neq v_e$ for any $e \in T$.
\end{lemma}
\begin{proof}
    Suppose $\fin(f) = v_e$ for some $e \in T$.  Let $f'$ be the unique edge on the boundary of $R$ such that $\init (f') = v_e$.  Note that $f'\neq f_v$.  Otherwise, we would have $f'=f_{e_0}$ where $\fin(e_0)=r$.  Lemma \ref{lem:FrogJumping} would then imply that $f'$ is gold, yielding a contradiction.
    
    Since $f'\neq f_v$, Definition \ref{def:bitransition} requires that $f'$ immediately follows $f$ in $\Phi(T)$.  This contradicts our assumption that $f$ is the last edge of $R$ to be reached by $\Phi(T)$.
\end{proof}

\noindent \textit{Proof of Theorem \ref{maintheorem}.}   Let $w$ denote the degree of  ${\rm wt}_C(F(T))$. For a region $R\in R(\overrightarrow{\mathcal{G}(L)})$, let $\partial R$ denote the oriented boundary of $R$ as a subgraph of $\overrightarrow{\mathcal{G}(L)}$. Let $\mathcal{B}$ denote the set of blue edges of $\overrightarrow{\mathcal{G}(L)}$.
%\begin{align*}
   % w &= \left|\{f\in\Edg(F(T)) \ | \ f \ \text{is} \ \text{gold}\}\right| \\
   % &= |\Edg(F(T))| - \left|\{f\in\Edg(F(T)) \ | \ f \ \text{is} \ \text{blue}\}\right|\\
   % &= (|V(\overrightarrow{\mathcal{G}(L)})|-1) - \left|\{f\in\Edg(F(T)) \ | \ f \ % \text{is} \ \text{blue}\}\right|\\
   % &= (|V(\overrightarrow{\mathcal{G}(L)})|-1) - \left|\{f\in\Edg(F(T)) \ | \ f=f_e \ \text{for some} \ e\in T \ \text{and} \ f \ \text{is} \ \text{blue}\}\right| \\
   % &\phantom{={}} + \left|\{f\in\Edg(F(T)) \ | \ f=f_e \ \text{for some} \ e\in D\setminus T \ \text{and} \ f \ \text{is} \ \text{blue}\}\right|\\
   % &= (|V(\overrightarrow{\mathcal{G}(L)})|-1) - |E_{\text{away}}| - (|\{e\in\overrightarrow{\mathcal{G}(L)}) \ | \ e \ \text{is} \ \text{blue}\}| - m_2 - |\Edg(T)|) \\
   % &= (|V(\overrightarrow{\mathcal{G}(L)})|-1) - |\Edg(T)\setminus E_{\text{away}}| - |V(\overrightarrow{\mathcal{G}(L)})| + m_2 \\
   % &= |\Edg(T)\setminus E_{\text{away}}|+ (m_2-1).
%\end{align*}

\begin{align*}
   w &= \left|\{f\in\Edg(F(T)) \ | \ f \ \text{is} \ \text{gold}\}\right| \\
   &= |\Edg(F(T))| - \left|\{f\in\Edg(F(T)) \ | \ f \ \text{is} \ \text{blue}\}\right|\\
   &= (|\Ver(\overrightarrow{\mathcal{G}(L)})|-1) - \left|\{f\in\Edg(F(T)) \ | \ f \ \text{is} \ \text{blue}\}\right|\\
   &= (|\Ver(\overrightarrow{\mathcal{G}(L)})|-1) - \left|\{f\in\Edg(F(T)) \ | \ f=f_e \ \text{for some} \ e\in T \ \text{and} \ f \ \text{is} \ \text{blue}\}\right| \\
   &\phantom{={}} - \left|\{f\in\Edg(F(T)) \ | \ f\neq f_e \ \text{for any} \ e\in T \ \text{and} \ f \ \text{is} \ \text{blue}\}\right|\\
 \end{align*}

 %Now, by Lemma \ref{lem:FrogJumping}, we substitute $|\Edg_{\text{away}}(T)|$ for the second term above. 
 We consider the four possible cases for each blue edge $f\in\Edg(\overrightarrow{\mathcal{G}}(L))$.

 \begin{enumerate}
     \item []Case 1: The final vertex $\fin(f)=v_e$ for some $e\in \Edg_{\text{away}}(T)$.  In this case, we must have $f=f_e$.  Otherwise, $f_e$ would be a gold edge (since there are only two edges of $\overrightarrow{\mathcal{G}(L)}$ with a given final vertex -- one blue and one gold), and Lemma \ref{lem:FrogJumping} would imply that $e$ is directed towards $r$, yielding a contradiction. 
     %Accordingly, these are exactly the edges that are counted by the second term above.
    \item []Case 2: The final vertex $\fin(f)=v_e$ for some $e\in \Edg(T)\setminus \Edg_{\text{away}}(T)$. Then, $f$ cannot be an edge of $F(T)$, since by Lemma \ref{lem:FrogJumping}, $f_e$ must be gold.
    \item []Case 3: The final vertex $\fin(f) \neq v_e$ for any $e\in T$, and $f$ is the last edge of the blue region it bounds reached by $\Phi(T)$. Then, by Lemma \ref{lem:violetsnew}, $f$ cannot be an edge of $F(T)$.
    \item []Case 4: The final vertex $\fin(f) \neq v_e$ for any $e\in T$, and $f$ is not the last edge of the blue region it bounds reached by $\Phi(T)$.  In this case, Lemma \ref{lem:violetsnew} implies that $f\in\Edg(F(T))$.
 \end{enumerate}

 %\acom{Ref. the note in Lemma \ref{lem:FrogJumping}. I think I'm missing a sentence explaining that the last edge in any region is cannot be of the form $f_e$ for some $e\in T$ in one of the lemma statements. OR! Could include it here?}  \ecom{This is now Lemma \ref{lem:overlap}.} \acom{Awesome! Thank you!}
 
The term $\left|\{f\in\Edg(F(T)) \ | \ f=f_e \ \text{for some} \ e\in T \ \text{and} \ f \ \text{is} \ \text{blue}\}\right|$ in our computation of $w$ precisely counts the blue edges falling into Case 1, so we may replace this term with $|\Edg_{\text{away}}(T)|$. Furthermore, the term $\left|\{f\in\Edg(F(T)) \ | \ f\neq f_e \ \text{for any} \ e\in T \ \text{and} \ f \ \text{is} \ \text{blue}\}\right|$ counts the blue edges falling into Case 4.  By Lemma \ref{lem:overlap}, we can replace this term with $|\mathcal{B}| - |\Edg(T)| - m_2$. We then have 

\begin{align*}
   w &= (|\Ver(\overrightarrow{\mathcal{G}(L)})|-1) - |\mathcal{B}| + |\Edg(T)| - |\Edg_{\text{away}}(T)| + m_2 \\
    &= (|\Ver(\overrightarrow{\mathcal{G}(L)})|-1) - |\mathcal{B}| + |\Edg(T)\setminus \Edg_{\text{away}}(T)| + m_2. \\
\end{align*}

Finally, the map $f\mapsto\fin(f)$ is a bijection from $\mathcal{B}$ to $\Ver(\overrightarrow{G}(L))$. Therefore, 

$$w = |\Edg(T)\setminus \Edg_{\text{away}}(T)| + m_2 -1,$$

and we can conclude that ${\rm wt}_C(A)={\rm wt}_K(T)t^{m_2-1}$.

\qed

    \section*{Acknowledgements} The second author is grateful to Tam\'as K\'alm\'an and Alexander Postnikov for many inspiring conversations over many years. The authors are also grateful to Mario Sanchez for his interest and helpful comments about this work.

\bibliographystyle{amsplain}
\bibliography{refs}

\end{document}